\theoremstyle{plain}\newtheorem{thm}{Theorem}
\theoremstyle{plain}\newtheorem{known}{Theorem}
\theoremstyle{plain}\newtheorem{cor}{Corollary}
\theoremstyle{plain}\newtheorem{prp}{Proposition}
\theoremstyle{plain}\newtheorem{lem}{Lemma}
\theoremstyle{remark}\newtheorem{rem}{Remark}
\theoremstyle{plain}\newtheorem{proc}{Procedure}
\begin{document}
\newcommand{\mychi}{\mbox{1}\hspace{-0.35em}\mbox{1}}
\def \Mod {\operatorname{Mod}}
\newcommand \Th[1]{{#1}th{\,}}
\title{A congruential recurrence characterizes the inverses of S\'{o}s permutations}
\author{Makoto Nagata$^{1)}$}
\address{1) Faculty of Pharmacy, Osaka Medical and Pharmaceutical University}
\author{Yoshinori Takei$^{2)}$}
\address{2) Faculty of Sport Science, Nippon Sport Science University}
\subjclass[2020]{Primary 05A05, Secondary 11B37, 11B50.}
\keywords{(congruential) quasi-progression, Farey sequence, S\'{o}s permutation,  Sur{\'{a}}nyi's bijection, symmetric group}
\maketitle
%
%
\newcommand{\mysos}{S\'{o}s}
\newcommand{\mysur}{Sur\'{a}nyi}
\newcommand{\changedred}[1]{\textcolor{red}{#1}}
\newcommand{\supermod}[1]{\overline{\mathrm{Mod}}_{#1}}
\newcommand{\ryakuki}[1]{({#1})_m}
\newcommand{\supermodtwo}[2]{\overline{\mathrm{Mod}}_{#1}({#2})}
\def \Sym {\frak{S}}
\def \Sos {\mathcal{S}}
\def \Sinv {\mathcal{S}^{*}}
\def \Rcr {\mathcal{V}}
\def \Req {\mathcal{W}}
\def \pNAP {\mathcal{X}}
\def \Deq {\mathcal{Y}}
\def \myPhi {\Psi}
\begin{abstract}
  In a proof of the three gaps theorem, a class of permutations
  known as the {\mysos} permutations was introduced. It is
  known that a {\mysos} permutation, as a sequence, satisfies
  a certain recurrence ({\mysos}'s recurrence),
  however, whether the converse holds remains unknown.
  On the other hand, the inverses of {\mysos} permutations
  have been studied also. It has been reported that
  such a permutation satisfies a congruential recurrence as
  a sequence. The converse problem of this fact, i.e.,
  whether a permutation satisfying the congruential recurrence
  is the inverse of a {\mysos} permutation, is also unsolved,
  except for a finite number of the degrees of the permutations.
  This paper relates the set of permutations satisfying
  the congruential recurrence to other sets of permutations
  and gives upper bounds for their cardinalities. The upper bounds
  are in fact tight. In particular, the set of 
  permutations satisfying the congruential recurrence
  has the same cardinality as that of {\mysos} permutations,
  giving the affirmative answer to the above unsolved problem as
  a corollary.
As another corollary, it is shown that all permutation, which is 
regarded as a {\it congruential quasi-progression of diameter $1$\/} in 
that the set formed by the first order differences modulo the degree 
of the permutation is a singleton or a set of two successive 
integers, is the inverse of a {\mysos} permutation with an elementary 
operation called as shift applied.
As an application of these
  facts, we present a procedure that lifts the set of the inverses of the
  {\mysos} permutations of a given degree to the
  set of the same kind with the degree increased by one,
  without referring to the underlying parameters defining 
  the {\mysos} permutations or the Farey sequence
  associated to them.
\end{abstract}

\section{Introduction}
  For a positive integer $m$ and a real number $\alpha,$
  the permutation that sorts the fractional parts
  of $m$ real numbers $\alpha, 2\alpha, \ldots, m\alpha$
  in the increasing order is referred to as a {\it {\mysos} permutation\/}
  of degree $m$. This class of permutations
  was introduced in \cite{sos} to prove
  so-called three gaps theorem and \cite{bock} describes {\mysos} permutations
  in detail. A known property of the {\mysos} permutations
  is that their terms satisfy a certain recurrence
  \cite[Theorem I]{sos}, to which we refer as {\mysos}'s recurrence
  in what follows. Another known fact is the existence
  of {\it {\mysur}'s bijection\/} \cite[Satz I]{sur},
  which maps
  the pair formed by the denominators of two successive
  terms in the \Th{$m$} Farey sequence to the pair
  of the first and last terms
  of a {\mysos} permutation of degree $m$.
  Thus, if a permutation satisfies {\mysos}'s recurrence as a sequence
  {\it and if\/} its first and last terms form the pair of
  the denominators of two successive terms in the \Th{$m$} Farey sequence, then it is a {\mysos} permutation.
However, it is not known
whether a permutation satisfying {\mysos}'s
  recurrence
  is always a {\mysos} permutation.

  The {\it inverses\/} of the {\mysos} permutations have
  been also studied \cite{tikan1,tikan2, ranking2}
  and it is reported that they satisfy a certain congruential
  recurrence. The converse problem of this fact, i.e.,
  whether a permutation satisfying the congruential
  recurrence is 
 the inverse of a {\mysos} permutation, is still unsolved, except
  that it has been verified using computers
  for a finite number of the degrees $m$.

  This paper presents an upper bound of the cardinality
  of the set of the permutations satisfying the congruential recurrence,
  by relating the set to another set of permutations which has
  a different defining relation and an increased degree.
  In fact the upper bound is the same as the number of {\mysos}
  permutations which is already known, as a corollary,
  we obtain the affirmative
  answer to the converse problem that all permutations
  satisfying the congruential recurrence are 
  the inverses of {\mysos} ones.

We also obtain another corollary which states, roughly speaking, 
that all permutation which forms a {\it congruential quasi-progression of diameter $1$\/}
essentially comes from the inverse of a {\mysos} permutation.

  Furthermore, as an application, we present a {\it lifting procedure\/}
  which lifts the set of permutations satisfying the congruential
  recurrence of degree $m-1$ to the set of the same kind whose degree
  is $m$. 
Though the resulting set of our procedure is
  the set of 
 the inverses of 
{\mysos} permutations of degree $m$, 
it is not a product made from the underlying parameters
$\alpha$ of {\mysos} permutations
or the Farey sequence. 
Only the information of each sequence in the set of degree $m-1$ and integer operations are used by the procedure.

  The rest of the paper is organized as follows.
  Section 2 is preliminary, where we introduce notations and known results
  on which we will depend. In Section 3, the main results and
  their proofs are presented. In Section 4, the aforementioned
  application is described.

\section{Some known results on the inverses of {\mysos} permutations}\label{sect:known}
In this section, we recall some known properties about the inverses of  {\mysos} permutations.

Suppose that $m$ is an integer $\ge 2$.
  Let $[m]$ denote the set $\{1,2,\ldots, m\}$ of consecutive integers
  from $1$ to $m$. In this paper,  the term ``permutation''
  means a bijective map from $[m]$ to $[m]$ and $m$
  is referred to as the {\it degree\/} of the permutation.
  Let $\Sym_m$ denote the set of all permutations of degree $m$.
  For a real number $\alpha$, its integral part $\lfloor \alpha \rfloor$
  denotes the largest integer that is not larger than $\alpha$, whereas
  $\{ \alpha \}$ 
  denotes
  the fractional part $\alpha - \lfloor \alpha \rfloor$ of $\alpha.$

  In this paper 
  referring to the name of the author of \cite{sos},
  a permutation $\sigma$ of degree $m$ is said to be a
  {\it {\mysos} permutation\/}, if there exists a real number $\alpha$
  such that
\begin{equation}\label{Sosnojyouken1}
0< \{\sigma(1)\alpha\} < \{\sigma(2)\alpha\} < \cdots < \{\sigma(m)\alpha\}<1.
\end{equation}
  Hereafter the set of all {\mysos} permutations
  of degree $m$ is denoted by ${\Sos}_m$. In addition,
  let ${\Sinv}_m$ be the collection of the inverses of them:
  $${\Sinv}_m:=\{\sigma^{-1}\in {\Sym}_m\ : \ \sigma \in {\Sos}_m\}.$$
  For the properties of {\mysos} permutations, the readers
  are referred to \cite{bock} as well as \cite{sos,sur,rosia}.
  In particular,
  {\mysur}'s bijection \cite[Satz I]{sur} describes
    the crucial correspondence between a {\mysos} permutation
    $\sigma\in{\Sym}_m$ satisfying Ineq.~\eqref{Sosnojyouken1}
    and the interval formed by consecutive two terms in the
    Farey sequence into which the real parameter $\alpha$ falls.
  Since the main interest of the current paper is in {\it the inverses\/}
  of the {\mysos} permutations rather than themselves, below,
  the description
  of the facts/properties of the {\mysos} permutations is kept to the
  minimum necessary.

  For a Boolean predicate $P$, let
  $$
  \mychi(P) := \begin{cases} 1 & \text{\ if } P \text{\ is true,}
    \\ 0 & \text{\ if } P \text{\ is false} \end{cases}
  $$
  be its truth value converted to the integer (i.e.,
  another notation of Iverson's bracket).
  Now we quote a few known facts on the {\mysos} permutations and their
  inverses.
\begin{known}[{\cite[Theorem I]{sos}}]
\label{Known0}
Suppose that $m\ge 2$ and suppose that
$\sigma$ is 
a {\mysos} permutation of degree $m$.
Then 
{
\begin{equation}\label{sosrec}
\sigma(i+1)= \sigma(i) +\begin{cases}
\sigma(1) & \text{\ if\ }\  \sigma(i) \le m-\sigma(1) ,\\
\sigma(1)-\sigma(m) & \text{\ if\ }\ m-\sigma(1)< \sigma(i) < \sigma(m) ,\\
-\sigma(m) & \text{\ if\ }\ \sigma(m)\le \sigma(i)
\end{cases}
\end{equation}
}
holds for $i\in[m-1]$.
\end{known}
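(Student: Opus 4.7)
The plan is to identify $\sigma(i+1)$ as the unique $j'\in[m]$ for which $\{j'\alpha\}$ is the least element of $\{\,\{k\alpha\}:k\in[m]\,\}$ strictly exceeding $\{\sigma(i)\alpha\}$, and then to recognise this $j'$ in each of the three branches of the formula. Write $a:=\sigma(1)$, $b:=\sigma(m)$, $\delta_1:=\{a\alpha\}$, and $\delta_2:=1-\{b\alpha\}$; by Ineq.~\eqref{Sosnojyouken1}, $\delta_1$ and $1-\delta_2$ are respectively the minimum and the maximum of $\{k\alpha\}$ over $k\in[m]$, and the roles of $(a,\delta_1)$ and $(b,\delta_2)$ are interchanged when $\alpha$ is replaced by $-\alpha$.

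Two preliminaries set the stage. First, I would show $a+b\ge m+1$, which ensures that the three cases in the statement are pairwise disjoint: were $a+b\le m$, then $a+b\in[m]$ and a direct evaluation of $\{(a+b)\alpha\}$ gives $\delta_1+1-\delta_2$, $\delta_1-\delta_2$, or $0$ (according to the sign of $\delta_1-\delta_2$), and each such value contradicts either the maximality of $1-\delta_2$, the minimality of $\delta_1$, or the strict positivity of $\{k\alpha\}$ for $k\in[m]$. Second, the \emph{key lemma}: if $j\le m-a$, then $\{j\alpha\}<1-\delta_1$; for $j+a\in[m]$ forces $\{(j+a)\alpha\}\ge\delta_1$, while the alternative $\{j\alpha\}+\delta_1\ge 1$ would give $\{(j+a)\alpha\}=\{j\alpha\}+\delta_1-1<\delta_1$, contradicting minimality. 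Replaying the same argument for $-\alpha$ yields the dual: $j\le m-b$ implies $\{j\alpha\}>\delta_2$.

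Setting $j:=\sigma(i)$, the case analysis proceeds as follows. In Case~(i), $j\le m-a$, the key lemma gives $\{(j+a)\alpha\}=\{j\alpha\}+\delta_1$, and any intermediate $k\in[m]$, via $\ell:=k-j$, yields either $\ell>0$ with $\{\ell\alpha\}\in(0,\delta_1)$ (violating minimality) or $\ell<0$ with $\ell':=-\ell\in[1,m-a-1]$ and $\{\ell'\alpha\}>1-\delta_1$ (violating the key lemma). Case~(iii), $j>b$ (automatic since $i<m$), is the mirror image under $\alpha\mapsto-\alpha$ and uses the dual lemma. Case~(ii), $m-a<j<b$, is more delicate: applying minimality to $b-j\in[m]$ (with $\{(b-j)\alpha\}=1-\delta_2-\{j\alpha\}$), together with the fact that the equality case $\{(b-j)\alpha\}=\delta_1$ forces $j=b-a\notin(m-a,b)$, yields $\{j\alpha\}<1-\delta_1-\delta_2$ and hence $\{(j+a-b)\alpha\}=\{j\alpha\}+\delta_1+\delta_2$; an interfering $k$ then splits into $\ell>0$ (examined through $a-\ell\in[m]$ to derive $\{\ell\alpha\}\ge\delta_1+\delta_2$) and $\ell<0$ (where $\ell':=-\ell\in[1,b-2]$ is first forced to exceed $m-a$ by the key lemma, after which $b-\ell'\in[m]$ delivers the final contradiction).

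The main obstacle is Case~(ii): whereas Cases~(i) and~(iii) each close with a single use of the key lemma or its dual, the middle case requires chained use of both lemmas on two different auxiliary shifts ($a-\ell$ and $b-\ell'$) to bound $\{j\alpha\}$ and to exclude intermediate neighbors simultaneously.
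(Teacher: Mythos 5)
The paper does not prove this statement at all: Theorem \ref{Known0} is quoted as a known result from S\'{o}s's original paper and is used as a black box, so there is no in-paper argument to compare yours against. Judged on its own terms, your proposal is correct. Both preliminaries hold: evaluating $\{(a+b)\alpha\}$ under the hypothesis $a+b\le m$ does produce a contradiction in each of the three sign cases of $\delta_1-\delta_2$, so $a+b\ge m+1$ and the three ranges in Eq.~\eqref{sosrec} are pairwise disjoint; and the key lemma is sound, with the by-product (which you use later and should state as part of the lemma) that $\{(j+a)\alpha\}=\{j\alpha\}+\delta_1$ whenever $j\le m-a$. Case~(i) goes through as written. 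For Case~(iii), ``mirror image'' must be read as re-running the Case~(i) argument for $-\alpha$ (identifying the $\alpha$-predecessor of every $j\le m-b$ as $j+b$, hence the $\alpha$-successor of every $j''>b$ as $j''-b$), not as invoking the already-proven statement for $\sigma$ itself, which would be circular since the hypothesis would then sit on $\sigma(i+1)$ rather than $\sigma(i)$; your mention of the dual lemma indicates the former reading, which is fine. In Case~(ii), the strict bound $\{j\alpha\}<1-\delta_1-\delta_2$ via $b-j$ together with the exclusion $j\ne b-a$ is exactly right, as is the $\ell>0$ sub-case via $a-\ell$.

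Two small remarks on the $\ell<0$ sub-case of Case~(ii). The key lemma as you stated it ($\ell'\le m-a\Rightarrow\{\ell'\alpha\}<1-\delta_1$) does not by itself force $\ell'>m-a$, since you only know $\{\ell'\alpha\}>1-\delta_1-\delta_2$; you need its strengthened form, namely that $\ell'\le m-a$ would give $\{(\ell'+a)\alpha\}=\{\ell'\alpha\}+\delta_1>1-\delta_2$, contradicting maximality. More to the point, that intermediate step is dispensable: for any $\ell'=j-k\in[1,b-2]$ one has $b-\ell'\in[m]$ and $\{(b-\ell')\alpha\}=\{1-\delta_2-\{\ell'\alpha\}\}$, which lies either in $(0,\delta_1)$ (contradicting minimality) or above $1-\delta_2$ (contradicting maximality), using only $\{\ell'\alpha\}>1-\delta_1-\delta_2$ and $\ell'\ne b$. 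With that streamlining the argument closes cleanly.
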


It remains unknown whether the converse, 
i.e., 
a permutation
  satisfying the recurrence \eqref{sosrec} is a {\mysos} permutation, is true.

  On the other hand, the inverses of {\mysos} permutations
  satisfy a congruential recurrence.
\begin{known}[{cf.~{\cite[Corollary 1]{tikan1}}}]
\label{Known1}
Suppose that $m\ge 2$ and suppose that 
$\theta$ is the inverse of a {\mysos} permutation of degree $m$.
Then
{\rm
\begin{equation}\label{knowneq1}
\theta(i+1)-\theta(i)\equiv \theta(1)-\mychi(\theta(m)\le \theta(i)) \mod m
\end{equation}
}
holds for $i\in[m-1]$.
\end{known}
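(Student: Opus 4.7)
The plan is to translate the claim into the language of fractional parts and compute the rank difference directly. Let $\alpha$ be a parameter realizing $\theta^{-1}$ as a {\mysos} permutation, write $\delta:=\{\alpha\}$, and put $p_j:=\{j\alpha\}$ for $j\ge 0$, so that $p_0=0$ and $\theta(j)=|\{\,l\in[m]\ :\ p_l\le p_j\,\}|$. Introduce the wrap indicator $w(j):=\mychi(p_j+\delta\ge 1)$, so that $p_{j+1}=p_j+\delta-w(j)$ for every $j\ge 0$, and set $|W|:=\sum_{l=1}^m w(l)$.

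The technical core will be the pointwise identity
\[
\mychi(p_l\le p_{j+1})-\mychi(p_{l-1}\le p_j)\ =\ w(l-1)-w(j),\qquad l\in[m],\ j\in[m-1].
\]
I would obtain it by rewriting $p_l\le p_{j+1}$ as $p_{l-1}-p_j\le w(l-1)-w(j)$ and doing a four-case split on $(w(l-1),w(j))$: in the two diagonal cases both sides collapse to $0$, while in the two mixed cases both indicators are forced to definite values, because $w(\cdot)=1$ puts the argument in $[1-\delta,1)$ and $w(\cdot)=0$ puts it in $[0,1-\delta)$. This step is where I expect the principal obstacle to lie: it is the point at which the wrap-around nonlinearity of rotation on the circle is distilled into a clean linear relation between the indicators.

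Summing the identity over $l\in[m]$ and using $p_0=0\le p_j$ for the $l=1$ term, the left-hand side reduces to $\theta(j+1)-\bigl(1+\theta(j)-\mychi(p_m\le p_j)\bigr)$, while the right-hand side becomes $\bigl(|W|-w(m)\bigr)-m\cdot w(j)$. Reducing modulo $m$ eliminates the $m\cdot w(j)$ term and yields
\[
\theta(j+1)-\theta(j)\ \equiv\ \bigl(1+|W|-w(m)\bigr)-\mychi(p_m\le p_j)\pmod{m}.
\]

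To finish, I would prove the exact integer identity $\theta(1)=1+|W|-w(m)$: the left minus one counts $l\in[m]$ with $p_l<\delta$, while $|W|-w(m)=\sum_{l=1}^{m-1}w(l)$ counts $l\in[m-1]$ with $p_l\ge 1-\delta$, and the map $l\mapsto l-1$ (with inverse $j\mapsto j+1$) is a bijection between these sets, since $p_l<\delta$ forces $w(l-1)=1$ and hence $p_{l-1}=p_l+1-\delta\in[1-\delta,1)$, and conversely $p_j\ge 1-\delta$ with $j\in[m-1]$ gives $p_{j+1}=p_j+\delta-1<\delta$. Combined with the equivalence $p_m\le p_j\iff\theta(m)\le\theta(j)$, this completes the proof of the stated congruence.
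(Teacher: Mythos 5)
The paper does not prove this statement: it is quoted as a known result (Theorem~B, citing Corollary~1 of the reference in its statement) and used as a black box, so there is no in-paper argument to compare yours against. Your blind proof is correct and self-contained. The pointwise identity $\mychi(p_l\le p_{j+1})-\mychi(p_{l-1}\le p_j)=w(l-1)-w(j)$ holds in all four cases of $(w(l-1),w(j))$ (the two mixed cases need only $0\le p_{\bullet}<1$); the summation over $l\in[m]$ is handled correctly, with the boundary contributions $\mychi(p_0\le p_j)=1$ and $w(0)=0$ justified by $p_0=0$ and $0<\delta<1$ (both guaranteed by the strict inequalities in Ineq.~\eqref{Sosnojyouken1}); and the bijection $l\mapsto l-1$ between $\{l\in[m]:p_l<\delta\}$ and $\{j\in[m-1]:p_j\ge 1-\delta\}$ is valid because $l=1$ never lies in the first set, so the image stays in $[m-1]$. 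One observation worth recording: before you reduce modulo $m$, your computation already yields the exact integer identity $\theta(j+1)-\theta(j)=\theta(1)-\mychi(\theta(m)\le\theta(j))-m\,w(j)$, and since $w(j)=\mychi(p_{j+1}<p_j)=1-\mychi(\theta(j)\le\theta(j+1))$, this is precisely the defining equation of the set $\mathcal{W}_m$ of Section~\ref{sect:mainresults}. Thus your argument proves the stronger inclusion $\mathcal{S}^{*}_m\subset\mathcal{W}_m$ directly, a fact the paper only obtains by combining Theorem~\ref{Known1} with the equality $\mathcal{V}_m=\mathcal{W}_m$ of Theorem~\ref{teiri1}.
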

  In other words, the inverse of a {\mysos}
  permutation is a slightly generalized congruential arithmetic progression
  in that the differences $\theta(i+1)-\theta(i)$ take, in modulo $m$,
  at most two values $\theta(1)$ and $\theta(1)-1$.
  The first question in this paper is whether the converse
  of Theorem \ref{Known1} is the case.
  Eq.~\eqref{knowneq1}, a requirement on the
  congruential difference set of a permutation $\theta$,
  is an interesting property of a permutation in its own right
  and we define the following set of permutations, not necessarily {\mysos} ones,
  with this property.
For $m \ge 2$, a subset ${\Rcr}_m \subset {\Sym}_m$ is defined as
$${\Rcr}_m:=\{\theta\in {\Sym}_m \ : \ \theta(i+1)-\theta(i)\equiv \theta(1)-\mychi(\theta(m)\le \theta(i)) \mod m  \ \text{for} \ i\in[m-1]\}.$$
Then, Theorem \ref{Known1} is shortly described as
$${\Sinv}_m\subset {\Rcr}_m
.$$

  We introduce an action $\lambda$
  over permutations and an equivalence relation
  based on $\lambda$.
  Let $\Mod_m(j)$ be the standard residue of an integer $j$ modulo $m$
  taking the value in the set $\{0, 1, \ldots, m-1\}$. 
  In addition, let
  $\supermod{m}(j):=\Mod_m(j-1)+1$ be a variant of it,
  in which $0$ of the range is replaced with $m$
  keeping the {$\operatorname{mod}$ $m$} class.
  For $\theta \in {\Sym}_m$, the map
  \begin{equation}\label{sihuto1teigisiki}
   \lambda(\theta) := \supermod{m}(\theta(\cdot) + 1)
  \end{equation}
   is also an element of ${\Sym}_m$ (We note that
   in \cite[Lemma 1]{bock} the same concept as $\lambda$ is introduced
   with the notation $c^{-1}$).
   The action
   $\lambda: {\Sym}_m \ni \theta \mapsto \lambda(\theta) \in {\Sym}_m$
   forms a cyclic group $\langle \lambda \rangle$ by iterative applications
   $\lambda \circ \cdots \circ \lambda(\theta)$. It is easy to check
   that $\lambda^k(\theta) = \supermod{m}(\theta(\cdot) + k)$ for
   $k \in \mathbb{Z}$
   and that
   $\lambda^{m}(\theta) = \theta$ for all $\theta \in {\Sym}_m$,
   while $\lambda^{k}(\theta) \neq \lambda^{l}(\theta)$ for
   $0 \leq k < l < m$ considering that $\lambda^{k}(\theta)(1) \ (k=0, \ldots, m-1)$
   take the $m$ different values of $[m]$.
   In other words, for any $\theta \in {\Sym}_m$
   the orbit $\langle \lambda \rangle \cdot \theta$ always contains
   $m$ different permutations.
   Any permutation $\lambda^{k}(\theta)$ in the orbit is referred to
   as a {\it shift\/} of $\theta$. Then, for
   $\theta, \theta' \in {\Sym}_m$, a binary relation
   $$
   \theta \sim \theta' \ \overset{\text{def}}{\Leftrightarrow}\ 
   \theta' \text{\ is a shift of \ } \theta
   $$
   is defined. Observe that it has equivalent definitions
   \begin{equation}\label{sihutodoutiteigino2}
   \theta \sim \theta' \ \Leftrightarrow\ \exists\ k \in [m]  \text{\ s.t. }
   \theta'(i) \equiv \theta(i) + k \mod m
\text{\ for\ } i \in [m]
   \end{equation}
   or
   $$
   \theta \sim \theta' \ \Leftrightarrow \ 
   \theta(i) - \theta(j) \equiv
   \theta'(i) - \theta'(j) \mod m \text{\ for\ } i, j \in [m]
   $$
   and that $\sim$ is an equivalence relation over ${\Sym}_m$.
   We say that $\theta$ and $\theta'$ are {\it shift-equivalent\/}
   if $\theta \sim \theta'$.
   Given an arbitrary subset $T$ of permutations, its closure with respect
   to the equivalence relation $\sim$ is defined. Formally,
   the {\it shift-closure operator\/} 
   $\widetilde{\cdot}: 2^{{\Sym}_m}\to 2^{{\Sym}_m}$ is defined
   through
   $$
   \widetilde{T}:=
\{\theta'\in {\Sym}_m\ : \ \exists \ \theta\in T \text{\ s.t. } \theta\sim\theta'\}, \text{\ for \ } T \subset {\Sym}_m.
$$   
Of course, it holds that $\widetilde{\widetilde{T}}=\widetilde{T}.$
We also note that $\{\theta  \in \widetilde{T} : \theta(1) = 1 \}$
is a complete representative system of $\widetilde{T}/\sim,$
because $\widetilde{\{\theta\}} = \langle \lambda \rangle \cdot \theta$
contains $m$ different permutations whose values $\{ \lambda^k(\theta)(1) :
 k = 0, 1, \ldots m-1\}$ coincide with
the set $[m]$ for any $\theta \in {\Sym}_m$.

\begin{rem}
It seems appropriate to mention here that
  the term {\it {\mysos} permutation \/} in \cite{bock} refers
  to a
  bijection $\pi: \{0, \ldots, m-1 \} \rightarrow \{0, \ldots, m-1 \}$
  satisfying
  $0< \{\pi(0)\alpha+\beta\} < \{\pi(1)\alpha+\beta\} < \cdots < \{\pi(m-1)\alpha+\beta\}<1$ for some $\alpha$ and $\beta$,
  whereas in \cite{tikan1}, a permutation
  $\sigma \in {\Sym}_m$ satisfying
  $0< \{\sigma(1)\alpha+\beta\} < \{\sigma(2)\alpha+\beta\} < \cdots < \{\sigma(m)\alpha+\beta\}<1$ for some $\alpha$ and $\beta$
  is referred to as a {\it permutation of {\mysos}-type}.
  They define a class of permutations
  wider than that of \cite{sos}
  using Ineq.~\eqref{Sosnojyouken1},
  by the presence of $\beta$. In \cite{tikan1},
  the distinction of the original and the wider concepts
  is made by using the term {\mysos} permutations and {\mysos}-type
  permutations respectively. This paper also inherits this distinction
  of the terminology. Therefore, a {\mysos} permutation
  means a permutation of {\mysos}-type with $\beta = 0$. 
  By adding the $\beta$ term, a {\mysos} permutation is {\it rotated\/}
  to form a permutation of {\mysos}-type. Taking the inverse,
  it turns out that the inverses of the permutations of {\mysos}-type
  coincide with $\widetilde{{\Sinv}_m}$.
In the following section, it will be revealed that
  our approach
  essentially requires introducing both of
  ${\Sinv}_m$ and $\widetilde{{\Sinv}_m}$.
\end{rem}

\begin{rem}
  The above
  definition of {\mysos}-type permutations
  may remind some readers
  of {\it Beatty sequences\/} $(\lfloor i\alpha + \beta\rfloor)_{i=1}^\infty$
  and/or {\it Sturmian words.\/} Indeed,
  \cite{obr} relates Sturmian words and {\mysos} permutations.
  In contrast, we restrict ourselves to the permutations of a finite
  degree, in this paper.
\end{rem}

 In the following, $\phi$ denotes Euler's totient function,
  i.e., $\phi(x) = |\{ a \in [x]: \gcd(a, x) = 1 \}|$ for any positive
  integer $x$. Aforementioned {\mysur}'s bijection
  \cite{sur} (see also \cite{rosia}) relates
  the pair formed by the denominators of two successive
  terms in the \Th{$m$} Farey sequence to the pair
  $(\sigma(1), \sigma(m))$ of the first and last terms
  of a {\mysos} permutation $\sigma$. An important and direct
  consequence of this one-to-one
  correspondence is the cardinality formula
  $|{\Sos}_m|=\sum_{k=1}^{m}\phi(k)$. Taking the inverse, we have the
  following.
\begin{known}[\cite{sur,rosia}]\label{Known2}
Suppose that  $m\ge 2$. Then the cardinality of ${\Sinv}_m$ is 
$|{\Sinv}_m|=\sum_{k=1}^{m}\phi(k)$.
\end{known}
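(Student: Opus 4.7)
The plan is to compose two bijections: {\mysur}'s bijection, which the excerpt has just introduced as the map identifying a {\mysos} permutation $\sigma \in {\Sos}_m$ with the pair of denominators of two successive terms of the \Th{$m$} Farey sequence $\mathcal{F}_m$ via $\sigma \mapsto (\sigma(1), \sigma(m))$, together with the inversion map $\sigma \mapsto \sigma^{-1}$, which is a bijection of ${\Sym}_m$ onto itself and restricts to a bijection ${\Sos}_m \to {\Sinv}_m$ by the very definition of ${\Sinv}_m$.

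First, I would count the pairs of successive terms in $\mathcal{F}_m$. Recalling that $\mathcal{F}_m$ consists of all reduced fractions $a/k$ in $[0,1]$ with $1 \le k \le m$, one has $|\mathcal{F}_m| = 1 + \sum_{k=1}^{m}\phi(k)$: the summand $1$ accounts for $0/1$, and for each $k \in [m]$ there are exactly $\phi(k)$ reduced fractions of the form $a/k$ with $1 \le a \le k$. Consequently the number of consecutive pairs is $|\mathcal{F}_m| - 1 = \sum_{k=1}^{m}\phi(k)$.

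Next, invoking {\mysur}'s bijection, I would conclude that $|{\Sos}_m| = \sum_{k=1}^{m}\phi(k)$. Since inversion is a bijection of ${\Sym}_m$ and, by the definition ${\Sinv}_m = \{\sigma^{-1} : \sigma \in {\Sos}_m\}$, it restricts to a bijection ${\Sos}_m \to {\Sinv}_m$; hence the desired formula $|{\Sinv}_m| = |{\Sos}_m| = \sum_{k=1}^{m}\phi(k)$ follows.

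The main obstacle in this chain is {\mysur}'s bijection itself, namely verifying that $\sigma \mapsto (\sigma(1), \sigma(m))$ is truly a bijection between ${\Sos}_m$ and the set of consecutive denominator-pairs in $\mathcal{F}_m$. Establishing it would require locating a parameter $\alpha$ realizing a given {\mysos} permutation inside a specific Farey interval, identifying the endpoints of that interval with $\sigma(1)$ and $\sigma(m)$, and constructing an inverse assignment that reproduces $\sigma$ from the pair. Since the excerpt cites this as the content of \cite{sur,rosia}, however, I would simply invoke it rather than reprove it, which reduces the remainder of the argument to the elementary enumeration of Farey fractions carried out above.
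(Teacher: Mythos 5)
Your proposal is correct and follows essentially the same route as the paper: the paper likewise treats the cardinality formula $|{\Sos}_m|=\sum_{k=1}^{m}\phi(k)$ as a direct consequence of {\mysur}'s bijection with the successive-denominator pairs of the \Th{$m$} Farey sequence (cited, not reproved), and then passes to ${\Sinv}_m$ via the cardinality-preserving inversion map. Your explicit count of the Farey intervals as $|\mathcal{F}_m|-1=\sum_{k=1}^{m}\phi(k)$ merely spells out a step the paper leaves implicit.
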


Furthermore, the cardinality of $\widetilde{{\Sinv}_m}$ 
has been obtained as follows:
\begin{known}[{\cite[Theorem 4]{bock} see also \cite[Theorem 5]{tikan1}}]\label{Known3}
Suppose that  $m\ge 2$. Then the cardinality of $\widetilde{{\Sinv}_m}$ is 
$|\widetilde{{\Sinv}_m}|=m\sum_{k=1}^{m-1}\phi(k)$.
\end{known}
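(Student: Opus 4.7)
The plan is to count shift-orbits in $\widetilde{{\Sinv}_m}$. Since every orbit under $\langle \lambda \rangle$ contains exactly $m$ distinct permutations and, as noted after Eq.~\eqref{sihutodoutiteigino2}, the set $\{\theta \in \widetilde{{\Sinv}_m}: \theta(1)=1\}$ meets each orbit in exactly one element, the task reduces to showing
\begin{equation*}
|\{\theta \in \widetilde{{\Sinv}_m}: \theta(1)=1\}| = \sum_{k=1}^{m-1}\phi(k).
\end{equation*}

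By the Remark above, $\widetilde{{\Sinv}_m}$ equals the set of inverses of permutations of {\mysos}-type of degree $m$. Since inversion satisfies $\theta(1) = 1 \Leftrightarrow \sigma(1) = 1$ for $\sigma = \theta^{-1}$, one equivalently counts {\mysos}-type permutations $\sigma$ of degree $m$ with $\sigma(1) = 1$. Given such $\sigma$ realized by parameters $(\alpha, \beta)$, I would set $t := \{\alpha + \beta\}$; the elementary identity $\{i\alpha + \beta\} = \{(i-1)\alpha + t\}$ shows that $\sigma(1) = 1$ is equivalent to $t < \{j\alpha + t\}$ for every $j \in [m-1]$. Because $\{j\alpha + t\}$ equals either $\{j\alpha\} + t$ or $\{j\alpha\} + t - 1$, this inequality forces $\{j\alpha\} + t < 1$, so that $\{j\alpha + t\} = \{j\alpha\} + t$ with no wrap-around. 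Consequently, the order of $\{j\alpha + t\}$ for $j \in [m-1]$ coincides with the order of $\{j\alpha\}$ for $j \in [m-1]$.

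It follows that the map
\begin{equation*}
\Phi(\sigma)(j) := \sigma(j+1) - 1, \qquad j \in [m-1],
\end{equation*}
sends $\sigma$ to a {\mysos} permutation of degree $m-1$ with parameter $\alpha$. The map is clearly injective, and I would prove surjectivity by the inverse construction: given $\tau \in {\Sos}_{m-1}$ with parameter $\alpha$, choose any $t \in (0,\ 1 - \max_{j \in [m-1]}\{j\alpha\})$, fix $\beta$ with $\{\alpha + \beta\} = t$, and set $\sigma(1) = 1$ and $\sigma(j+1) = \tau(j) + 1$; a verification then shows $\sigma$ is of {\mysos}-type with parameters $(\alpha, \beta)$ and $\Phi(\sigma) = \tau$. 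Combining with the cardinality $|{\Sos}_{m-1}| = \sum_{k=1}^{m-1}\phi(k)$ noted just before Theorem \ref{Known2}, multiplication by $m$ yields the claimed formula.

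The most delicate step is the surjectivity of $\Phi$: one must confirm that the constructed $\sigma$ of degree $m$ really is of {\mysos}-type, which requires checking that the additional value $\{m\alpha + \beta\} = \{(m-1)\alpha + t\}$ slots into the ordering dictated by $\tau$ at exactly the position where $\{(m-1)\alpha\}$ appears among $\{j\alpha\}$ for $j \in [m-1]$. This reduces to the same no-wrap-around estimate in the admissible range of $t$, but one should also be mindful of non-degeneracy issues ruling out $\{j\alpha + \beta\} = 0$ for $j \in [m]$, which hold automatically on the open range of $t$ above.
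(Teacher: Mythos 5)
Your argument is correct, and it is worth noting at the outset that the paper does not prove this statement at all: Theorem \ref{Known3} is imported from \cite[Theorem 4]{bock}, so your proposal supplies a proof where the paper offers only a citation. Both of your reductions are sound: the orbit count $|\widetilde{{\Sinv}_m}| = m\,\bigl|\{\theta \in \widetilde{{\Sinv}_m} : \theta(1)=1\}\bigr|$ is exactly the complete-representative-system fact recorded after Eq.~\eqref{sihutodoutiteigino2}, and passing to {\mysos}-type permutations fixing $1$ is legitimate since $\theta(1)=1 \Leftrightarrow \theta^{-1}(1)=1$. The heart of the proof, the bijection $\Phi$ onto ${\Sos}_{m-1}$, also checks out: the no-wrap-around deduction (if $\{j\alpha\}+t\ge 1$ then $\{j\alpha+t\}=\{j\alpha\}+t-1<t$, so the minimality of $t$ forces $\{j\alpha\}+t<1$ and, implicitly, $\{j\alpha\}>0$) is precisely what makes the order of $\{j\alpha+t\}$ over $j\in[m-1]$ agree with that of $\{j\alpha\}$, and the admissible range $t\in\bigl(0,\,1-\max_{j\in[m-1]}\{j\alpha\}\bigr)$ is the right one for surjectivity. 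A pleasant structural remark: since $\sigma(1)=1$ gives $\Phi(\sigma^{-1})=\Phi(\sigma)^{-1}$, your analytic bijection from the {\mysos}-type permutations fixing $1$ onto ${\Sos}_{m-1}$ is, under inversion and Corollary \ref{Cor1}, the same degree-reduction as the paper's combinatorial bijection $\myPhi_m:{\Sym}_m^1\cap{\Deq}_m\to{\Req}_{m-1}$ of Proposition \ref{Prop1} --- but the paper could not use that route here without circularity, since Corollary \ref{Cor1} itself relies on Theorem \ref{Known3}; your version works directly from the defining inequalities and only invokes Sur\'anyi's count $|{\Sos}_{m-1}|=\sum_{k=1}^{m-1}\phi(k)$. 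The single cosmetic issue is the base case $m=2$, where the reduction lands on ${\Sos}_1$, a degree outside the paper's standing hypothesis $m\ge 2$; this is harmless, as that set is trivially a singleton and $\phi(1)=1$.
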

  We note that the assertion \cite[Theorem 4]{bock} was stated
  for {\mysos}-type permutations, i.e.,
  the inverses of the permutations in $\widetilde{{\Sinv}_m}$.
  Also, note that $|\widetilde{{\Sinv}_m}|$ is not
  $m$ times $|{\Sinv}_m|$.

  Lastly, we quote a result with respect to $\widetilde{{\Rcr}_m}$
  from \cite{tikan2}: Let
  $${\pNAP}_m:=\{\theta\in {\Sym}_m\ : \ \exists\ k \in [m-1]\ \forall\ i\in [m-1]\ \big[ \Mod_m(\theta(i+1)-\theta(i))\in\{ k,k+1\}\big]\},$$
  for which ${\pNAP}_m=\widetilde{{\pNAP}_m}$ obviously holds.
An element of ${\pNAP}_m$ may be regarded as a modulo-$m$ version of
an {\it $m$-term quasi-progression of diameter $1$. \/} 
An $m$-term sequence $x_1 < \cdots < x_m$ is said to be an
{\it $m$-term quasi-progression of diameter $d$\/} \cite{BEF}
if there exists
$N$ such that $N \le x_{i+1}-x_i \le N+d$ for $i \in [m-1]$.
We borrow this term 
removing the increasing condition of the sequence, restricting the range of the sequence
to $[m]$ and applying $\Mod_m$ to
the differences.
 We say that $x_1, \ldots, x_m \in [m]$ is
a {\it mod-$m$ congruential $m$-term quasi-progression of diameter $d$\/} if 
there exists
an integer $N$ such that $N \le \Mod_m(x_{i+1}-x_i) \le N+d$ for $i \in [m-1]$.
Using this term, we can say that
${\pNAP}_m$ consists of all permutations of degree $m$ which are mod-$m$ congruential $m$-term quasi-progressions of diameter $1$. 
As we noted
  just after introducing Theorem \ref{Known1}, $\theta \in {\Rcr}_m$ satisfies the stricter condition that the differences $\Mod_m(\theta(i+1)-\theta(i))$ are $\theta(1)-\mychi(\theta(m)\le \theta(i))$. Therefore it holds that ${\Rcr}_m\subset {\pNAP}_m$, from which the inclusion $\widetilde{{\Rcr}_m}\subset{\pNAP}_m$ follows by ${\pNAP}_m=\widetilde{{\pNAP}_m}$.
  In fact, \cite[Corollary 5]{tikan2} asserted the reverse inclusion:
\begin{known}[{\cite[Corollary 5]{tikan2}}]\label{Known5}
Suppose that $m\ge 2$. Then the set ${\pNAP}_m$ coincides with $\widetilde{{\Rcr}_m}$.
\end{known}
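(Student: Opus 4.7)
The inclusion $\widetilde{{\Rcr}_m} \subseteq {\pNAP}_m$ is observed just before the statement, so my plan concentrates on the reverse inclusion. The idea is to take $\theta \in {\pNAP}_m$, apply a shift normalizing $\theta(1)$ to a specific value, and deduce the ${\Rcr}_m$-recurrence from a uniqueness-of-step-pattern lemma. Given $\theta\in{\pNAP}_m$, I would pick $k\in[m-1]$ with $\Mod_m(\theta(i+1)-\theta(i))\in\{k,k+1\}$ for all $i\in[m-1]$, and set $\theta' = \lambda^j(\theta)$ with $j \equiv (k+1)-\theta(1) \pmod m$, so that $\theta'(1) = k+1$ and the step pattern is preserved. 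Writing $N = \theta'(m)$ and $s = |\{i\in[m-1]:\Mod_m(\theta'(i+1)-\theta'(i)) = k+1\}|$, telescoping the differences modulo $m$ yields $(m-1)k+s\equiv N-(k+1)\pmod m$, forcing $s=N-1$ since both sides lie in $\{0,1,\ldots,m-1\}$.

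The central step is the following uniqueness lemma: for a fixed initial value $a\in[m]$, a fixed $k\in[m-1]$, and a fixed $s\in\{0,\ldots,m-1\}$, at most one sequence $(d_1,\ldots,d_{m-1})\in\{k,k+1\}^{m-1}$ with exactly $s$ occurrences of $k+1$ makes the accumulated partial sums from $a$ (reduced into $[m]$ via $\supermod{m}$) a permutation of $[m]$. I would prove this by taking two such hypothetical patterns, letting $i^*$ denote their first index of disagreement and $j^*$ the next index at which their cumulative $(k+1)$-counts realign, and observing that on the index range strictly between $i^*$ and $j^*$ (inclusive at $j^*$) the two resulting sequences differ by $\pm 1$ modulo $m$ while coinciding elsewhere. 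Since both are permutations, the set of values visited on this range must equal its own $\pm 1$-shift in $\mathbb{Z}/m\mathbb{Z}$, leaving only $\emptyset$ or the full group as candidates, which is incompatible with the range having size $j^*-i^*\in[1,m-1]$.

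To close the argument, I would define a candidate $\tau:[m]\to[m]$ greedily by $\tau(1) = k+1$ and $\tau(i+1) = \supermod{m}(\tau(i)+(k+1))$ if $\tau(i)<N$, else $\tau(i+1) = \supermod{m}(\tau(i)+k)$. By construction, $\tau$ satisfies the ${\Rcr}_m$-recurrence provided it is a permutation of $[m]$ with $\tau(m) = N$ and exactly $s=N-1$ occurrences of $(k+1)$-steps; granting this, the uniqueness lemma forces $\tau = \theta'$, so $\theta' \in {\Rcr}_m$ and $\theta \in \widetilde{{\Rcr}_m}$. The main obstacle is exactly this verification that $\tau$ is a permutation, because the update rule is not globally injective on $[m]$: both $N-1$ and $N$ send to the same residue $N+k$ modulo $m$. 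One therefore has to show that the greedy orbit of $k+1$ avoids $N$ until the $m$-th step and otherwise traverses each element of $[m]$ exactly once. The cleanest route I see is bookkeeping on the $k$ wrap-arounds over the $m\to 1$ boundary together with the low/high split at $N$; an alternative, more conceptual route is to exhibit $\tau$ as the inverse of a S\'os permutation for a rational $\alpha$ whose denominators are dictated by $k$, $N$, and $m$, which directly appeals to the three-gaps/Sturmian structure.
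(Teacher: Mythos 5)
First, a point of comparison: the paper does not prove this statement at all --- it is quoted as Theorem~\ref{Known5} from \cite[Corollary 5]{tikan2}, and the only thing established in the surrounding text is the easy inclusion $\widetilde{{\Rcr}_m}\subset{\pNAP}_m$. So your proposal is an attempt at a result the paper treats as external, and must be judged on its own. Your reduction is sound as far as it goes: normalizing to $\theta'(1)=k+1$ is legitimate (and handles the degenerate constant-difference case correctly for either admissible choice of $k$), and the telescoping argument forcing $s=N-1$ is correct. The difficulty is that both of the two remaining steps have genuine gaps.

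The proof of your uniqueness lemma does not work as sketched. Writing $c_j,c'_j$ for the cumulative counts of $(k{+}1)$-steps in the two patterns and $D_j=c_j-c'_j$, the values at position $j{+}1$ differ by $D_j$ modulo $m$; but between the first disagreement $i^*$ and the first realignment $j^*$ the walk $D$ need not stay at $\pm1$ (for patterns beginning $k{+}1,k{+}1,k,k,\ldots$ versus $k,k,k{+}1,k{+}1,\ldots$ one gets $D=1,2,1,0$), and after $j^*$ the walk can leave $0$ again, so the two permutations need not coincide outside the window. Consequently the set of values visited on the window is not forced to equal its own $\pm1$-translate in $\mathbb{Z}/m\mathbb{Z}$, and the ``only $\emptyset$ or the whole group'' dichotomy you invoke is unavailable. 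A correct proof must control the full profile $\{D_j\}$, not just a single excursion of height one; note that the bare combinatorial condition ``$\bigsqcup_r W_r=[m]=\bigsqcup_r(W_r-r)$'' admits nontrivial solutions, so the walk structure must be used in an essential way that your sketch does not supply. Separately, the closing step is openly incomplete: you acknowledge that you have not verified that the greedy $\tau$ is a permutation with $\tau(m)=N$ and exactly $N-1$ large steps, and this is not a routine check --- the greedy rule fails to produce a permutation for arbitrary pairs $(k,N)$ (for $m=5$, $k=2$, $N=3$ it revisits $5$), so one must import the existence of $\theta'$ into the existence proof for $\tau$, and no mechanism for doing so is given. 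Until both the uniqueness lemma and the existence of the $\Rcr_m$-witness with the prescribed $(k,N)$ are actually established, the reverse inclusion ${\pNAP}_m\subset\widetilde{{\Rcr}_m}$ remains unproved.
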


  In the next section, our main results will be shown
  without depending on the known results quoted above,
  then the corollaries will be shown using both the main results
  and the known results quoted above.

\section{Main results}\label{sect:mainresults}
Before stating our results, we need to introduce two sets, each of which consists
of
permutations that satisfy their own difference equations.
For $m\ge 2$, 
let
$${\Req}_m:=\{\theta\in {\Sym}_m \ : \ 
\theta(i+1)-\theta(i) =  \theta(1)-\mychi(\theta(m)\le \theta(i))
+m \left(\mychi(\theta(i)\le \theta(i+1)) -1\right)
\ \text{for} \ i\in[m-1]\},
$$
which
satisfies the inclusion ${\Req}_m\subset {\Rcr}_m$ clearly.
Next, for $m\ge 3$, 
let
$${\Deq}_m:=\{\theta\in {\Sym}_m \ : \ \Delta_\theta(i+1)-\Delta_\theta(i)=0 \ \text{for} \ i\in[m-2]\},$$
introducing the notation
\begin{equation}\label{Deltateigisiki}
\Delta_\theta(i):=
\theta(i+1)-\theta(i)
+\mychi(\theta(m)\le \theta(i))
-\mychi(\theta(1)\le \theta(i+1))
-(m-1)\mychi(\theta(i)\le \theta(i+1))
\ \text{for\ }\theta\in {\Sym}_m .
\end{equation}
In addition, for the case of $m=2$, let ${\Deq}_2:={\Sym}_2.$
We remark that a set which is defined by a similar
but different condition
is considered in \cite[Section 5.3]{ranking2}.
Nevertheless, we need the introduction of ${\Deq}_m$ above to approach
Theorem \ref{teiri1} below.
An important property satisfied by ${\Deq}_m$ is the closure property
$\widetilde{{\Deq}_m}={\Deq}_m$ with respect to the shift,
which will be presented as Proposition \ref{Prop2} in Section
\ref{SSSProp2}.

Here we state our main results, Theorems \ref{teiri1} and \ref{teiri2}.
Their proofs in this paper do not use 
the properties of ${\Sos}_m$, ${\Sinv}_m$ and ${\pNAP}_m$ in the previous section. In other words,   
they are formally independent of {\mysos} permutations and the inverses of {\mysos} permutations.

\begin{thm}\label{teiri1}
  Suppose that $m \ge 2$. Then
$${\Rcr}_m= {\Req}_m \ \text{\ and\ }\  {\Req}_m\subset {\Deq}_m.$$
\end{thm}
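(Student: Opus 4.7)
\medskip

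\noindent\textbf{Proof plan for Theorem \ref{teiri1}.}
The plan is to treat the two assertions ${\Rcr}_m={\Req}_m$ and ${\Req}_m\subset{\Deq}_m$ separately, both by direct case analysis on the sign of the successive difference $\theta(i+1)-\theta(i)$.

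For the first assertion, the inclusion ${\Req}_m\subset{\Rcr}_m$ is immediate since the equation in the definition of ${\Req}_m$ reduces to the congruence of ${\Rcr}_m$ modulo $m$. For the reverse inclusion I would start from $\theta\in{\Rcr}_m$ and write $a_i:=\theta(1)-\mychi(\theta(m)\le\theta(i))$. Because $\theta(i+1)\neq\theta(i)$, the difference $\theta(i+1)-\theta(i)$ lies in $[-(m-1),m-1]\setminus\{0\}$, which forces $a_i\not\equiv0\pmod m$, so $a_i\in\{1,\ldots,m-1\}$. Hence the congruence pins down $\theta(i+1)-\theta(i)$ to one of the two values $a_i$ (if $\theta(i+1)>\theta(i)$) or $a_i-m$ (if $\theta(i+1)<\theta(i)$); in each case the RHS of the ${\Req}_m$ equation evaluates to the same thing. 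This step is routine once one notices $a_i\in\{1,\ldots,m-1\}$.

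For ${\Req}_m\subset{\Deq}_m$, the $m=2$ case is vacuous since ${\Deq}_2={\Sym}_2$. For $m\ge 3$, the strategy is to plug the integer equation of ${\Req}_m$ directly into the defining expression \eqref{Deltateigisiki} of $\Delta_\theta(i)$. After the obvious cancellations one obtains
\[
\Delta_\theta(i)=\theta(1)-m+g(i),\qquad g(i):=\mychi(\theta(i)\le\theta(i+1))-\mychi(\theta(1)\le\theta(i+1)).
\]
Hence it suffices to show that $g$ is constant on $[m-1]$, and in fact that $g(i)=0$ for all $i\in[m-1]$ (note $g(1)=0$ automatically). I would do this by ruling out $g(i)=\pm 1$. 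If $g(i)=1$, then $\theta(i)<\theta(i+1)<\theta(1)$, and using the ${\Req}_m$ equation to compute $\theta(i+1)=\theta(i)+\theta(1)-\mychi(\theta(m)\le\theta(i))$ yields $\theta(i+1)\ge\theta(1)$, a contradiction in either sub-case of $\mychi(\theta(m)\le\theta(i))\in\{0,1\}$. Symmetrically, $g(i)=-1$ forces $\theta(i+1)<\theta(i)$ together with $\theta(i+1)\ge\theta(1)$, and solving for $\theta(i+1)=\theta(i)-m+\theta(1)-\mychi(\theta(m)\le\theta(i))$ leads to $\theta(i)\ge m$ (or $m+1$), contradicting $\theta(m)\le\theta(i)<m$ or the range of $\theta$ respectively.

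The main obstacle I foresee is the bookkeeping in the second part: there are four scenarios obtained by crossing the sign of $\theta(i+1)-\theta(i)$ with the value of $\mychi(\theta(m)\le\theta(i))$, and one must carefully track the inequalities so that the arithmetic constraint coming from ${\Req}_m$ actually contradicts the assumed position of $\theta(1)$ relative to $\theta(i+1)$. Once these four sub-cases are cleared, $g\equiv 0$ follows, $\Delta_\theta\equiv\theta(1)-m$ is constant on $[m-1]$, and the inclusion ${\Req}_m\subset{\Deq}_m$ is established.
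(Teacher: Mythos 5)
Your proposal is correct and takes essentially the same route as the paper: for ${\Rcr}_m={\Req}_m$ you resolve the congruence by locating the unique representative of $\theta(1)-\mychi(\theta(m)\le \theta(i))\in[m-1]$ in the admissible range of the difference (the paper equivalently bounds the integer multiplier $k_{\theta,i}$ to $\{-1,0\}$ and splits on it), and for ${\Req}_m\subset{\Deq}_m$ your reduction to $g\equiv 0$ is precisely the paper's claim $\mychi(\theta(i)\le\theta(i+1))=\mychi(\theta(1)\le\theta(i+1))$, proved by the same sub-case inequalities. One cosmetic slip: in the sub-case $g(i)=-1$ with $\mychi(\theta(m)\le\theta(i))=0$, the contradiction is that $\theta(i)\ge m$ forces $\theta(i)=m$ while $\theta(m)>\theta(i)$ would then require $\theta(m)>m$; your written ``$\theta(m)\le\theta(i)<m$'' has the inequality the wrong way around, but the argument goes through.
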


\begin{thm}\label{teiri2}
Suppose that $m \ge 2$. Then 
$$|{\Rcr}_m|
\le \sum_{k=1}^m \phi(k)
\ \text{\ and\ }\ 
|{\Deq}_m|
=|\widetilde{{\Deq}_m}|
\le m\sum_{k=1}^{m-1}\phi(k).$$
\end{thm}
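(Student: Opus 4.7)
The plan is to combine Theorem~\ref{teiri1} with the shift-closure $\widetilde{{\Deq}_m}={\Deq}_m$ (Proposition~\ref{Prop2}, to be proved in the next subsection) to reduce both cardinality estimates to counting representatives of $\sim$-orbits inside $\Deq_m$. The two inequalities will be obtained from essentially the same parametrization; the sharper bound for $\Rcr_m$ will exploit the equality recurrence that defines $\Req_m$.

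For $|\Deq_m|$, the closure property says that $\Deq_m$ is a disjoint union of $\sim$-orbits, each of size exactly $m$, so $|\Deq_m|=m\,N$ where $N$ is the number of orbits, and it suffices to prove $N\le\sum_{k=1}^{m-1}\phi(k)$. In each orbit I would take the unique representative with $\theta(1)=1$. For such a $\theta$, telescoping the constancy of $\Delta_\theta$ eliminates the $\theta(i+1)-\theta(i)$ contribution and pins down the common value $c=\Delta_\theta(i)$ as an explicit function of $\theta(m)$ and of the number of ascents of $\theta$. Once $c$ is known, the equation $\Delta_\theta(i)=c$ is first-order in $\theta$, so the entire sequence is determined by the single datum $b=\theta(m)$. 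It then remains to bound the number of admissible $b\in[m]$---those for which the generated sequence is actually a permutation of $[m]$---by $\sum_{k=1}^{m-1}\phi(k)$.

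For $|\Rcr_m|$, I would use $\Rcr_m=\Req_m$ (Theorem~\ref{teiri1}) and argue that the map $\theta\mapsto(\theta(1),\theta(m))$ is injective on $\Req_m$: the defining equality of $\Req_m$ recovers $\theta(i+1)$ from $\theta(i)$ as soon as $\theta(1)$ and $\theta(m)$ are given, so $\theta$ is reconstructed from its first and last entries alone. The image of this map is a set of pairs $(a,b)\in[m]^2$ whose cardinality I would bound by $\sum_{k=1}^{m}\phi(k)$, this time without passing to the shift quotient.

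The combinatorial core---and the step I expect to be the main obstacle---is to identify the admissible parameters ($b$ in the $\Deq_m$ analysis and $(a,b)$ in the $\Rcr_m$ analysis) with the denominators of successive Farey fractions of order $m-1$ and $m$, respectively, so that the Farey-length formulas $\sum_{k=1}^{m-1}\phi(k)$ and $\sum_{k=1}^{m}\phi(k)$ emerge as upper bounds. Because Theorem~\ref{teiri2} is to be established without invoking Sur\'{a}nyi's bijection or any property of $\Sos_m$ or $\Sinv_m$, this count must be extracted directly from the recurrence itself: one must verify that iterating the recurrence produces a bijection of $[m]$ only when a coprimality condition on the end-values holds, and that the resulting set of admissible end-values cannot exceed the appropriate Farey count.
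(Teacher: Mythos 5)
Your outline has a genuine gap and, in the $\Deq_m$ half, a step that is actually false. The false step is the claim that, once the common value $c=\Delta_\theta(i)$ is pinned down, an element of $\Sym_m^1\cap\Deq_m$ is determined by the single datum $b=\theta(m)$. Already for $m=4$ both $\mathbf{1234}$ (with $c=-3$) and $\mathbf{1324}$ (with $c=-2$) lie in $\Sym_4^1\cap\Deq_4$ and share $\theta(4)=4$ (both appear at the $\Deq_4\cap\Sym_4^1$ level of Figure \ref{fig:recursiveone}). The obstruction is that $c=-A_\theta$ (Lemma \ref{Lem1}) is a global quantity not recoverable from $\theta(m)$, so your parametrization must carry at least the pair $(\theta(m),A_\theta)$; parametrizing by $b$ alone gives a count ($2$ values of $b$ at $m=4$) that is strictly smaller than $|\Sym_4^1\cap\Deq_4|=4$, so the resulting ``bound'' cannot be an upper bound. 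The deeper problem is the step you yourself flag as the main obstacle: bounding the set of admissible parameters by $\sum_{k=1}^{m-1}\phi(k)$, resp.\ $\sum_{k=1}^{m}\phi(k)$, ``directly from the recurrence.'' That is not a finishing touch --- it amounts to re-proving the {\mysur}/Farey structure (a coprimality-and-size characterization of the admissible end-pairs $(\theta(1),\theta(m))$ together with the identity counting them), which is exactly the quantitative content the theorem is supposed to deliver. As written, the proposal defers the conclusion rather than proving it. (Your injectivity claim for $\theta\mapsto(\theta(1),\theta(m))$ on $\Req_m$ is, by contrast, correct.)

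The paper's proof takes a different and gap-free route that never characterizes admissible end-values: it is an induction on the degree. Proposition \ref{Prop1} gives a bijection $\Psi_m$ between $\Sym_m^1\cap\Deq_m$ and $\Req_{m-1}$, one degree lower; Proposition \ref{Prop2} gives $\widetilde{\Deq_m}=\Deq_m$, whence $|\Deq_m|=m\,|\Sym_m^1\cap\Deq_m|=m\,|\Req_{m-1}|$; and Proposition \ref{PropT} exhibits the explicit family $\theta_{a,b}$ of $\phi(m)$ ``linear'' permutations whose removal from $\Rcr_m=\Req_m$ leaves a set $\Req_m^-$ with pairwise shift-inequivalent elements. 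Since $\widetilde{\Req_m^-}\subset\Deq_m$ by Theorem \ref{teiri1} and Proposition \ref{Prop2}, comparing cardinalities of orbits yields $|\Req_m|-\phi(m)\le|\Req_{m-1}|$, and both Farey sums fall out by telescoping from $|\Req_2|=2$. If you wish to pursue your direct route instead, the missing lemma is essentially {\mysur}'s theorem restated for the recurrence, a substantial self-contained argument that would have to be supplied in full.
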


By Theorems \ref{Known1}, \ref{Known2} and \ref{Known3}
in the previous section,
Theorems \ref{teiri1} and \ref{teiri2}
immediately produce the following corollary.

\begin{cor}\label{Cor1}
Suppose that $m\ge 2$. Then
$${\Sinv}_m={\Rcr}_m={\Req}_m \quad \text{and} \quad
\widetilde{{\Sinv}_m}={\Deq}_m=\widetilde{{\Rcr}_m}=\widetilde{{\Req}_m}.$$
In particular,
the set ${\Sinv}_m$ of the inverses of {\mysos} permutations of degree $m$ coincides with the set ${\Rcr}_m$,
and 
the set $\widetilde{{\Sinv}_m}$ of the inverses of permutations of {\mysos}-type of degree $m$ is the set ${\Deq}_m$.
\end{cor}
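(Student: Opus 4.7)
The plan is to deduce the corollary by a pinching argument: every inclusion already on the table is matched by a cardinality bound that saturates. All the real work has been absorbed into Theorems \ref{teiri1} and \ref{teiri2}, so the proof here is essentially a direct assembly of the pieces.

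For the unshifted chain ${\Sinv}_m={\Rcr}_m={\Req}_m$, I would first invoke Theorem \ref{Known1} for the inclusion ${\Sinv}_m\subset {\Rcr}_m$. Comparing Theorem \ref{Known2}, which gives $|{\Sinv}_m|=\sum_{k=1}^{m}\phi(k)$, with the first bound of Theorem \ref{teiri2}, namely $|{\Rcr}_m|\le \sum_{k=1}^{m}\phi(k)$, forces $|{\Sinv}_m|=|{\Rcr}_m|$, so the inclusion is an equality. Theorem \ref{teiri1} then supplies ${\Rcr}_m={\Req}_m$.

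For the shifted chain, applying the shift-closure operator to the identities ${\Sinv}_m={\Rcr}_m={\Req}_m$ yields $\widetilde{{\Sinv}_m}=\widetilde{{\Rcr}_m}=\widetilde{{\Req}_m}$ with no extra argument. To fold ${\Deq}_m$ into the chain, I would combine ${\Req}_m\subset {\Deq}_m$ from Theorem \ref{teiri1} with the shift-invariance $\widetilde{{\Deq}_m}={\Deq}_m$ stated in Section \ref{sect:mainresults}, obtaining $\widetilde{{\Sinv}_m}=\widetilde{{\Req}_m}\subset \widetilde{{\Deq}_m}={\Deq}_m$. The second bound of Theorem \ref{teiri2} combined with Theorem \ref{Known3} then gives $|{\Deq}_m|\le m\sum_{k=1}^{m-1}\phi(k)=|\widetilde{{\Sinv}_m}|$, which pins down equality throughout the chain. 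The only point requiring care is that the shift-invariance of ${\Deq}_m$ be genuinely available at the moment it is invoked; beyond this, there is no real obstacle, since Theorems \ref{teiri1} and \ref{teiri2} were engineered precisely so that the pinch closes.
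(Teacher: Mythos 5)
Your proposal is correct and is essentially the argument the paper intends: the paper derives Corollary \ref{Cor1} ``immediately'' from Theorems \ref{Known1}, \ref{Known2}, \ref{Known3}, \ref{teiri1} and \ref{teiri2} by exactly this pinching of inclusions against saturated cardinality bounds. The one point you flag --- availability of $\widetilde{{\Deq}_m}={\Deq}_m$ --- is covered by Proposition \ref{Prop2} (and trivially for $m=2$ since ${\Deq}_2={\Sym}_2$), so there is no gap.
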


Note that Corollary \ref{Cor1} in particular asserts the converse of Theorem \ref{Known1}. Therefore now we have:
  For a permutation $\theta \in {\Sym}_m,$ satisfying
  the congruential recurrence \eqref{knowneq1} is equivalent
  to $\sigma := \theta^{-1}$ being a {\mysos} permutation
  which is characterized
  by Ineq.~\eqref{Sosnojyouken1} for some real number $\alpha$.

Moreover, Theorem \ref{Known5} and Corollary \ref{Cor1}
allow one to
deduce
the following directly.

\begin{cor}\label{Cor2}
  Suppose that $m\ge 2$. Then the set ${\pNAP}_m$ is the same as the set $\widetilde{{\Sinv}_m}$
of
  the inverses
  of permutations of {\mysos}-type of degree $m$:
$${\pNAP}_m=\widetilde{{\Sinv}_m}.
$$
\end{cor}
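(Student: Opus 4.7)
The plan is essentially a one-line chaining of two already-established identities. Theorem~\ref{Known5} identifies ${\pNAP}_m = \widetilde{{\Rcr}_m}$, while the second half of Corollary~\ref{Cor1} gives $\widetilde{{\Sinv}_m} = \widetilde{{\Rcr}_m}$. I would simply combine these through the common object $\widetilde{{\Rcr}_m}$ to conclude ${\pNAP}_m = \widetilde{{\Sinv}_m}$, which is exactly the assertion of the corollary.

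Because all of the substantive combinatorial work has already been absorbed into the derivation of Corollary~\ref{Cor1} --- namely, Theorems~\ref{teiri1} and~\ref{teiri2}, together with the cardinality input from Sur\'{a}nyi's bijection via Theorems~\ref{Known2} and~\ref{Known3} --- no further computation is required here. In particular, I would not need to reopen the analysis of the congruential recurrence \eqref{knowneq1} or re-verify any property of the shift-closure operator $\widetilde{\cdot}$, since both equalities being chained are already statements about shift-closed subsets of ${\Sym}_m$, so transitivity of set equality suffices.

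The main obstacle, in the sense of where the difficulty actually lives, is entirely upstream. The non-trivial inclusion ${\pNAP}_m \subset \widetilde{{\Rcr}_m}$ is precisely the content of Theorem~\ref{Known5}, and the equality $\widetilde{{\Sinv}_m} = \widetilde{{\Rcr}_m}$ rests on the new Theorems~\ref{teiri1} and~\ref{teiri2} together with the known cardinality $|\widetilde{{\Sinv}_m}| = m\sum_{k=1}^{m-1}\phi(k)$. Once these are invoked as black boxes, Corollary~\ref{Cor2} is pure substitution, and the proposal amounts to little more than writing the two-step chain of equalities explicitly.
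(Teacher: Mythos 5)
Your proposal is correct and matches the paper's own derivation exactly: the paper also obtains Corollary~\ref{Cor2} by chaining Theorem~\ref{Known5} (${\pNAP}_m=\widetilde{{\Rcr}_m}$) with the equality $\widetilde{{\Sinv}_m}=\widetilde{{\Rcr}_m}$ from Corollary~\ref{Cor1}. No further argument is needed.
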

In other words, all permutation which forms a mod-$m$ congruential 
$m$-term
quasi-progression of diameter $1$
is the inverse of a {\mysos}-type permutation.

In the rest of this section, we prove Theorems \ref{teiri1} and \ref{teiri2}.
Our proofs are self-contained.

\subsection{Proof of Theorem \ref{teiri1}}
In this subsection, 
we show our proof of 
Theorem \ref{teiri1} by splitting it to the first part ${\Rcr}_m={\Req}_m$ and the second part ${\Req}_m\subset {\Deq}_m$.

\subsubsection{Proof of ${\Rcr}_m={\Req}_m$}
\begin{proof}
It is clear that the inclusion relationship ${\Req}_m\subset {\Rcr}_m$ holds by the definitions.
We will show that the inverted relationship ${\Rcr}_m\subset {\Req}_m$ holds.

Let $\theta$ be in ${\Rcr}_m$. Then there exists an integer $k_{\theta,i}$ which satisfies
that
\begin{equation}\label{kitukitousiki}
\theta(i+1)-\theta(i)-\theta(1)+\mychi(\theta(m)\le \theta(i)) = k_{\theta,i} m
\end{equation}
for $i\in [m-1]$.
We divide the LHS of Eq.~\eqref{kitukitousiki} into two parts.
Each part satisfies that
\begin{equation}\label{hutatunohutousiki}
1-m \le \theta(i+1)-\theta(i)\le m-1
\ \text{\ and\ }\
0\le \theta(1)-\mychi(\theta(m)\le\theta(i))\le m
\end{equation}
because 
$\theta(1),\theta(i)\text{\ and\ }\theta(i+1)\in [m]$.

Thus, the RHS of Eq.~\eqref{kitukitousiki} satisfies that $1-m -m \le k_{\theta,i} m \le m-1-0$,
that is, 
$$-2+\frac{1}{m} \le k_{\theta,i} \le 1-\frac{1}{m}$$
which show  bounds on $k_{\theta,i}$.
It follows that the integer $k_{\theta,i}$ must be either $-1$ or $0$.
We consider each case as follows.

When  
$k_{\theta,i}=-1$, 
Eq.~\eqref{kitukitousiki} implies that
$$
\theta(i+1)-\theta(i)=\theta(1)-\mychi(\theta(m)\le \theta(i))-m
$$
whose
RHS
is at most $0$, because it satisfies
$
\theta(1)-\mychi(\theta(m)\le \theta(i))-m\le m-m
$
by the second inequalities of \eqref{hutatunohutousiki}.
By $\theta(i+1)\not=\theta(i)$, 
we obtain $\theta(i+1)-\theta(i)\le -1$, that is, $\mychi(\theta(i)\le \theta(i+1))=0$,
which implies $k_{\theta,i}=\mychi(\theta(i)\le \theta(i+1))-1$
in this case of $k_{\theta,i}=-1$.

The case of $k_{\theta,i}=0$
 is similar:
Eq.~\eqref{kitukitousiki} means that
$
\theta(i+1)-\theta(i)=\theta(1)-\mychi(\theta(m)\le \theta(i))
$, whose RHS
is at least $0$, because of 
the second
  inequalities  
of \eqref{hutatunohutousiki}.
By $\theta(i+1)\not=\theta(i)$, 
we have $\theta(i+1)-\theta(i)\ge 1$, that is,
$\mychi(\theta(i)\le \theta(i+1))=1$
and then
$k_{\theta,i}=\mychi(\theta(i)\le \theta(i+1))-1$
in this case of $k_{\theta,i}=0$.

In both cases,
$$
\theta(i+1)-\theta(i)=\theta(1)-\mychi(\theta(m)\le \theta(i))+ m\left(\mychi(\theta(i)\le \theta(i+1))-1\right)
$$
holds. Therefore we conclude that $\theta\in {\Req}_m$ which implies ${\Rcr}_m\subset {\Req}_m$.
\end{proof}

\subsubsection{Proof of ${\Req}_m\subset {\Deq}_m$}
\begin{proof}
Let $\theta$ be in ${\Req}_m$. By the definition of ${\Req}_m$, 
\begin{equation}\label{meidai2notousiki1}
\theta(i+1)-\theta(i)+\mychi(\theta(m)\le \theta(i))-m \left(\mychi(\theta(i)\le \theta(i+1)) -1\right)= \theta(1) 
\end{equation}
holds for $i\in [m-1]$. In particular,
by the special case
$$
\theta(2)-\theta(1)+\mychi(\theta(m)\le \theta(1))-m \left(\mychi(\theta(1)\le \theta(2)) -1\right)= \theta(1)
$$
for $i=1$, we have
\begin{multline}
\theta(i+1)-\theta(i)+\mychi(\theta(m)\le \theta(i))-\mychi(\theta(i)\le \theta(i+1))-(m-1) \mychi(\theta(i)\le \theta(i+1)) \\
= \theta(2)-\theta(1)+\mychi(\theta(m)\le \theta(1))-\mychi(\theta(1)\le \theta(2))-(m-1) \mychi(\theta(1)\le \theta(2)).
\end{multline}
By using the notation $\Delta_\theta$
of Eq.~\eqref{Deltateigisiki}, the last equality is
equivalent to
$$\Delta_\theta(i)-\mychi(\theta(i)\le \theta(i+1))+\mychi(\theta(1)\le \theta(i+1))
=\Delta_\theta(1)$$
for $i\in [m-1]$.
Now we claim that
\begin{equation}\label{meidai2nokureemu}
\mychi(\theta(i)\le \theta(i+1))=\mychi(\theta(1)\le \theta(i+1))
\end{equation}
holds for $i\in [m-1]$.
If our claim, Eq.~\eqref{meidai2nokureemu}, is valid, then we have 
$\Delta_\theta(i)=\Delta_\theta(1)$
or equivalently
$\Delta_\theta(i+1)=\Delta_\theta(i)$ for $i\in [m-2]$,
that is,
$\theta\in {\Deq}_m$ which implies ${\Req}_m\subset {\Deq}_m$.

Let us show the validity of our claim, that is, Eq.~\eqref{meidai2nokureemu} holds for $i\in [m-1]$ as follows.
For $i\in [m-1]$, Eq.~\eqref{meidai2notousiki1} is
equivalent to  
\begin{equation}\label{meidai2notousiki3}
\theta(i+1)-\theta(1)=
\theta(i)-\mychi(\theta(m)\le \theta(i))
+m \left(\mychi(\theta(i)\le \theta(i+1)) -1\right).
\end{equation}
Because $\theta\in {\Sym}_m$ and $\mychi(\theta(m)\le \theta(i))\in\{0,1\}$,
we have $0\le \theta(i)-\mychi(\theta(m)\le \theta(i)) \le m$.
If $\theta(i)-\mychi(\theta(m)\le \theta(i))=0$, then 
both $\theta(i)=1$ and $\mychi(\theta(m)\le \theta(i))=1$ hold.
That is, $\theta(i)=1$ and $\theta(m)\le \theta(i)$. 
On the other hand, because
$\theta(m)\not=\theta(i)$ for $i\in [m-1]$, it is impossible that $\theta(m)\le \theta(i)$ for $\theta\in {\Sym}_m$.
Therefore $1\le \theta(i)-\mychi(\theta(m)\le \theta(i))$.
Similarly, if 
$\theta(i)-\mychi(\theta(m)\le \theta(i))=m$, then 
both 
$\theta(i)=m$ and $\mychi(\theta(m)\le \theta(i))=0$ hold, that is,
$\theta(i)=m$ and $\theta(m)> \theta(i)$.
But, by $\theta(m)\not=\theta(i)$ for $i\in [m-1]$, it is impossible that 
$\theta(m)> \theta(i)$ for $\theta\in {\Sym}_m$.
Then $\theta(i)-\mychi(\theta(m)\le \theta(i))\le m-1$.

Consequently, the inequalities 
\begin{equation}\label{meidai2nohutousiki3}
1\le \theta(i)-\mychi(\theta(m)\le \theta(i))\le m-1
\end{equation}
hold for $i\in [m-1]$.
Since $\mychi(\theta(i)\le \theta(i+1))$ is $0$ or $1$, 
we have to consider each case as follows.
When
$\mychi(\theta(i)\le \theta(i+1))=1$,
Eq.~\eqref{meidai2notousiki3} means that
$\theta(i+1)-\theta(1)=
\theta(i)-\mychi(\theta(m)\le \theta(i))
$.
Then
from
the inequalities \eqref{meidai2nohutousiki3}, it follows that
$\theta(i+1)-\theta(1)\ge 1$,
that is,
$\mychi(\theta(1)\le \theta(i+1))=1$.
When
$\mychi(\theta(i)\le \theta(i+1))=0$
is the case,
Eq.~\eqref{meidai2notousiki3} means that
$\theta(i+1)-\theta(1)=
\theta(i)-\mychi(\theta(m)\le \theta(i))-m
$
and from
the inequalities \eqref{meidai2nohutousiki3}, it follows that
$\theta(i+1)-\theta(1)\le -1$, that is,
$\mychi(\theta(1)\le \theta(i+1))=0$.

In either case, Eq.~\eqref{meidai2nokureemu} holds for $i\in [m-1]$. Therefore our claim is valid.
\end{proof}

\subsection{Some preliminaries and proof of Theorem \ref{teiri2}}
Our proof of Theorem \ref{teiri2} requires some preliminaries.

\subsubsection{An embedding of ${\Req}_{m-1}$ into ${\Deq}_m$}
We suppose that $m\ge 2$.
It is common to embed the set of permutations of degree $m-1$ into the set of permutations of degree $m$ having one particular fixed
point, say 1.
Here we introduce an explicit notation for such an embedding and its inverse.
Let ${\Sym}_m^1$ denote
$${\Sym}_m^1:=\{\theta\in {\Sym}_m\ : \ \theta(1)=1\}.
$$
We introduce the following map $\myPhi_m:{\Sym}_m^1\to {\Sym}_{m-1}$.
For $\theta\in
\Sym^1_m  
$,
$\myPhi_m(\theta) \in \Sym_{m-1}$ is defined by  
$$\myPhi_m(\theta)(i):=\theta(i+1)-1 \ \text{for}\ i\in [m-1].
$$
It is easy to check that $\myPhi_m:{\Sym}_m^1\to {\Sym}_{m-1}$ is well-defined and bijective
where the inverse $\myPhi_m^{-1}:{\Sym}_{m-1}\to {\Sym}_m^1$
satisfies, for given $\pi\in {\Sym}_{m-1}$, that
$$\myPhi_m^{-1}(\pi)(i):=\begin{cases}
1 & \ \text{if}\ i=1 ,\\
\pi(i-1)+1 & \ \text{if}\ i \ge 2 \\
\end{cases}\quad
\text{for $i\in [m]$}.
$$

The following Proposition \ref{Prop1} is similar to \cite[Theorem 37]{ranking2}.
We give its proof as below.

\begin{prp}\label{Prop1}
Suppose that $m\ge 3$.
Then
${\Req}_{m-1}$ is the bijective image of ${\Sym}_m^1\cap {\Deq}_m$ by $\myPhi_m$
:
$$\myPhi_m({\Sym}_m^1\cap {\Deq}_m)={\Req}_{m-1}.
$$
\end{prp}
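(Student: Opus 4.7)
The plan is to show the bijective image equality by direct computation, leveraging that $\myPhi_m$ is already bijective from $\Sym_m^1$ onto $\Sym_{m-1}$. It thus suffices to establish, for any $\theta \in \Sym_m^1$ with $\pi := \myPhi_m(\theta)$, the equivalence $\theta \in \Deq_m \Leftrightarrow \pi \in \Req_{m-1}$, running every step as a two-sided implication so that both inclusions follow simultaneously.

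First I would exploit $\theta(1) = 1$. Since $\theta(i+1) \ge 2 > 1 = \theta(1)$ for each $i \in [m-1]$, the indicator $\mychi(\theta(1) \le \theta(i+1))$ collapses to $1$ throughout, so
$$\Delta_\theta(i) = \theta(i+1) - \theta(i) + \mychi(\theta(m) \le \theta(i)) - 1 - (m-1)\mychi(\theta(i) \le \theta(i+1)).$$
In particular, using $\mychi(\theta(m) \le \theta(1)) = 0$, $\mychi(\theta(1) \le \theta(2)) = 1$, and $\theta(2) = \pi(1) + 1$, this yields $\Delta_\theta(1) = \pi(1) - m$. Next, for $i \in \{2, \ldots, m-1\}$ I would substitute $\theta(i) = \pi(i-1) + 1$, $\theta(i+1) = \pi(i) + 1$ and $\theta(m) = \pi(m-1) + 1$; the uniform $+1$ shifts cancel inside every indicator, giving
$$\Delta_\theta(i) = \pi(i) - \pi(i-1) + \mychi(\pi(m-1) \le \pi(i-1)) - 1 - (m-1)\mychi(\pi(i-1) \le \pi(i)).$$
Imposing $\Delta_\theta(i) = \Delta_\theta(1) = \pi(1) - m$ and reindexing $j := i - 1 \in [m-2]$ rearranges to exactly
$$\pi(j+1) - \pi(j) = \pi(1) - \mychi(\pi(m-1) \le \pi(j)) + (m-1)\bigl(\mychi(\pi(j) \le \pi(j+1)) - 1\bigr),$$
which is the defining recurrence of $\Req_{m-1}$.

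Because each manipulation above is a reversible equivalence, this chain proves $\theta \in \Sym_m^1 \cap \Deq_m \Leftrightarrow \myPhi_m(\theta) \in \Req_{m-1}$, and the proposition follows at once. I expect no serious obstacle: the computation is essentially mechanical once the indicator $\mychi(\theta(1) \le \theta(i+1))$ has been eliminated. The only bookkeeping nuance is that the $\Deq_m$ condition spans $i \in [m-1]$, giving $m-1$ equations $\Delta_\theta(i) = \Delta_\theta(1)$ of which the $i = 1$ case is the vacuous identity that merely pins down the common value, while the $\Req_{m-1}$ condition spans $j \in [m-2]$; the remaining $m - 2$ nontrivial relations correspond bijectively via $j = i - 1$.
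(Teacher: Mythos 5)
Your proof is correct and follows essentially the same route as the paper's: both exploit the bijectivity of $\myPhi_m:\Sym_m^1\to\Sym_{m-1}$, use $\theta(1)=1$ to collapse the indicators $\mychi(\theta(1)\le\theta(i+1))$ and $\mychi(\theta(m)\le\theta(1))$, and translate $\Delta_\theta(i)=\Delta_\theta(1)$ into the defining recurrence of $\Req_{m-1}$ under the shift $\theta(i)=\pi(i-1)+1$. The only difference is presentational: you run the chain as a single biconditional, whereas the paper argues one inclusion and then retraces the steps backward.
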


\begin{proof}
The bijectivity
as the map $\myPhi_m:{\Sym}_m^1\to {\Sym}_{m-1}$
has been mentioned.
Below, the equality of the image and ${\Req}_{m-1}$ is shown.

First, we show that $\myPhi_m({\Sym}_m^1\cap {\Deq}_m)\supset {\Req}_{m-1}$.
It is enough to show that ${\Sym}_m^1\cap {\Deq}_m\supset \myPhi_m^{-1}({\Req}_{m-1})$
by using the inverse map $\myPhi_m^{-1}:{\Sym}_{m-1}\to {\Sym}_m^1$ of $\myPhi_m$.
Let $\pi$ be in ${\Req}_{m-1}$. Then it satisfies that 
\begin{equation}\label{itibanmenotousiki}
\pi(i+1)-\pi(i)=\pi(1)-\mychi(\pi(m-1)\le \pi(i))+(m-1)\left(\mychi(\pi(i)\le \pi(i+1))-1\right)
\end{equation}
for $i \in [m-2]$.
That is,
\begin{equation}\label{nibanmenotousiki}
\pi(i)-\pi(i-1)=\pi(1)-\mychi(\pi(m-1)\le \pi(i-1))+(m-1)\left(\mychi(\pi(i-1)\le \pi(i))-1\right)
\end{equation}
for $i \in [m-1]$ with $i\not=1$.
We put  $\theta:=\myPhi^{-1}_m(\pi)$. Since $\theta(i)-1=\pi(i-1)$ for $i\in [m-1]$ with $i\not=1$,
Eq.~\eqref{nibanmenotousiki} means that
\begin{equation}\label{sanbanmenotousiki}
\theta(i+1)-\theta(i)=\theta(2)-1-\mychi(\theta(m)\le \theta(i))+(m-1)\left(\mychi(\theta(i)\le \theta(i+1))-1\right).
\end{equation}
To ease the rest of our argument, let us rewrite it as 
\begin{equation}\label{meidai3notochuusiki1}
\theta(i+1)-\theta(i)
+\mychi(\theta(m)\le \theta(i))
-1
-(m-1)\mychi(\theta(i)\le \theta(i+1))
=
\theta(2)-1
+0
-1
-(m-1).
\end{equation}
By $i+1\ge 2$ and by $\theta(1)=1$, $\theta(i+1)\ge 2$ etc., we have
$\mychi(\theta(1)\le \theta(i+1))=1$,
$\mychi(\theta(1)\le \theta(2))=1$,
$\mychi(\theta(m)\le \theta(1))=0$,
$\mychi(\theta(1)\le \theta(2))=1$.
Therefore Eq.~\eqref{meidai3notochuusiki1} is the same as 
\begin{multline}\label{hinsyututousiki}
\theta(i+1)-\theta(i)
+\mychi(\theta(m)\le \theta(i))
-\mychi(\theta(1)\le \theta(i+1))
-(m-1)\mychi(\theta(i)\le \theta(i+1))\\
=
\theta(2)-\theta(1)
+\mychi(\theta(m)\le \theta(1))
-\mychi(\theta(1)\le \theta(2))
-(m-1)\mychi(\theta(1)\le \theta(2)),
\end{multline}
which,
with Eq.~\eqref{Deltateigisiki},
 shows that $\theta=\myPhi^{-1}_m(\pi)$ satisfies
$\Delta_\theta(i)=\Delta_\theta(1)$
for $i\in [m-1]$.
Thus, it follows that 
$\Delta_\theta(i+1)=\Delta_\theta(i)$
for $i\in [m-2]$ and then
$
\theta\in 
{\Deq}_m,
$
which, with $\theta(1)=1,$ implies that
${\Sym}_m^1\cap {\Deq}_m\supset \myPhi_m^{-1}({\Req}_{m-1})$.

We now show
the reverse inclusion
$\myPhi_m({\Sym}_m^1\cap {\Deq}_m)\subset {\Req}_{m-1}$.
The argument below
simply follows the path of the above argument backward.
Let $\theta\in {\Sym}_m^1\cap {\Deq}_m$, then $\theta$ satisfies 
$\Delta_\theta(i+1)=\Delta_\theta(i)$
for $i\in [m-2]$, that is, 
$\Delta_\theta(i)=\Delta_\theta(1)$
for $i\in [m-1]$ with 
$\theta(1)=1$.
So $\theta$ satisfies 
Eq.~\eqref{hinsyututousiki} with $\theta(1)=1$.
Since  $\theta\in {\Sym}_m$ and
$\theta(1)=1$,
we have
$\theta(i+1)\ge 2$ for $i+1\ge 2$.
From the facts that 
$\mychi(\theta(1)\le \theta(i+1))=1$,
$\mychi(\theta(m)\le \theta(1))=0$, etc., 
we have Eq.~\eqref{meidai3notochuusiki1}.
Thus,  
$\theta \in {\Sym}_m^1\cap {\Deq}_m$ satisfies 
Eq.~\eqref{sanbanmenotousiki} for $i\in [m-1]$ and $\theta(1)=1$.
Put $\pi:=\myPhi_m(\theta)$, then $\pi(i)=\theta(i+1)-1$ for $i\in [m-1]$.
Therefore Eq.~\eqref{sanbanmenotousiki} implies Eq.~\eqref{nibanmenotousiki}, i.e., 
Eq.~\eqref{itibanmenotousiki} for $i\in [m-2]$, 
which implies $\pi\in {\Req}_{m-1}$.
\end{proof}

\subsubsection{The closure property of ${\Deq}_m$ with respect to the shift}\label{SSSProp2}
Our proof of Theorem \ref{teiri2} will use the property that ${\Deq}_m$ is closed about the shift-equivalence.
In \cite[Proposition 35]{ranking2},
similar properties are considered in various generalized settings.
We give a proof of what we need as follows. 
Suppose that $m\ge 2$.

For
a given permutation $\theta\in {\Sym}_m$,
we put the number of
its ascents as  
$$A_\theta:=\sum_{j=1}^{m-1}\mychi(\theta(j)\le\theta(j+1)),
$$
then use it to define a set
$$
{\Deq}_m':=\{
\theta\in{\Sym}_m\ : \ 
\Delta_\theta(i)+A_\theta=0\ \text{for}\ i\in[m-1]
\}
$$
 of permutations.
The condition 
$\Delta_\theta(i)=-A_\theta$
is a special case of the first equality in \cite[Proposition 21]{ranking2}.
The following
Lemma \ref{Lem1} states that 
  this condition
  is in fact
  equivalent to
  (though sounds stricter than)
  the defining condition of ${\Deq}_m$.
\begin{lem}\label{Lem1}
Suppose that $m\ge 3$. Then ${\Deq}_m'={\Deq}_m$.
\end{lem}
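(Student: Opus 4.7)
The plan is to establish both inclusions separately. The inclusion $\Deq_m'\subset\Deq_m$ is immediate: if $\Delta_\theta(i)=-A_\theta$ for every $i\in[m-1]$, then $\Delta_\theta$ is constant in $i$, so $\Delta_\theta(i+1)-\Delta_\theta(i)=0$ for $i\in[m-2]$, whence $\theta\in\Deq_m$.

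For the reverse inclusion, suppose $\theta\in\Deq_m$. Since $\Delta_\theta(i+1)-\Delta_\theta(i)=0$ for $i\in[m-2]$, the quantity $\Delta_\theta(i)$ takes a common value $c$ independent of $i\in[m-1]$. It suffices to show $c=-A_\theta$, since then $\Delta_\theta(i)+A_\theta=0$ for every $i\in[m-1]$, giving $\theta\in\Deq_m'$. I would obtain this identity by summing the defining expression \eqref{Deltateigisiki} for $\Delta_\theta(i)$ over $i\in[m-1]$ and equating the result with $(m-1)c$.

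Each of the four pieces on the right-hand side of \eqref{Deltateigisiki} admits an elementary evaluation after summation. The part $\theta(i+1)-\theta(i)$ telescopes to $\theta(m)-\theta(1)$. The sum $\sum_{i=1}^{m-1}\mychi(\theta(m)\le\theta(i))$ counts those $i\in[m-1]$ with $\theta(i)\ge\theta(m)$; because $\theta$ is a bijection from $[m]$ onto $[m]$, exactly $m-\theta(m)+1$ indices $j\in[m]$ satisfy $\theta(j)\ge\theta(m)$, and removing the index $j=m$ itself leaves $m-\theta(m)$. Symmetrically, $\sum_{i=1}^{m-1}\mychi(\theta(1)\le\theta(i+1))=m-\theta(1)$, after excluding the value at the index $j=1$. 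By definition $\sum_{i=1}^{m-1}\mychi(\theta(i)\le\theta(i+1))=A_\theta$. Assembling these with the signs prescribed by \eqref{Deltateigisiki} gives
\[
(m-1)c=\bigl(\theta(m)-\theta(1)\bigr)+\bigl(m-\theta(m)\bigr)-\bigl(m-\theta(1)\bigr)-(m-1)A_\theta=-(m-1)A_\theta,
\]
so $c=-A_\theta$, completing the proof.

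The main (and rather modest) obstacle will be the careful bookkeeping in the two counting identities, ensuring that the single excluded index in each range is correctly accounted for so that the contributions $\theta(m)-\theta(1)$, $m-\theta(m)$ and $-(m-\theta(1))$ cancel cleanly and isolate the term $-(m-1)A_\theta$. No other property of $\theta$ beyond its being a bijection on $[m]$ is used, so the hypothesis $m\ge 3$ (needed only to make $\Deq_m$ defined through $i\in[m-2]$) plays no further role.
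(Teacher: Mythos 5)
Your proof is correct and follows essentially the same route as the paper: the easy inclusion is immediate from constancy of $\Delta_\theta$, and the reverse inclusion is obtained by summing \eqref{Deltateigisiki} over $i\in[m-1]$, evaluating the two indicator sums as $m-\theta(m)$ and $m-\theta(1)$ via the bijectivity of $\theta$, and concluding $(m-1)\Delta_\theta(1)=-(m-1)A_\theta$. The only (cosmetic) difference is that the paper counts the complementary events $\mychi(\theta(m)>\theta(i))$ and $\mychi(\theta(1)>\theta(i+1))$, each equal to $\theta(m)-1$ and $\theta(1)-1$ respectively, whereas you count the events directly; the bookkeeping is equivalent.
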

\begin{proof}
Let $\theta$ be in $ {\Deq}_m'$. 
Then
$\Delta_\theta(i+1)=
-A_\theta=
\Delta_\theta(i)$
for $i\in [m-2]$, that is, $\theta\in{\Deq}_m$ which implies ${\Deq}_m'\subset{\Deq}_m$.
Let us show ${\Deq}_m'\supset{\Deq}_m$.
In the general case of $\theta\in {\Sym}_m$,
$\sum_{i=1}^{m-1}\mychi(\theta(m)> \theta(i))$
is the number of $\theta(1),\ldots,\theta(m-1)$ that is less than $\theta(m)$.
It is equal to $\theta(m)-1$.
Similarly, 
$\sum_{i=1}^{m-1}\mychi(\theta(1)> \theta(i+1))$ is equal to $\theta(1)-1$.
Hence
\begin{multline*}
\sum_{i=1}^{m-1}\Delta_\theta(i)=
\theta(m)-\theta(1)
+\sum_{i=1}^{m-1}\mychi(\theta(m)\le \theta(i))
-\sum_{i=1}^{m-1}\mychi(\theta(1)\le \theta(i+1))
-(m-1)\sum_{i=1}^{m-1}\mychi(\theta(i)\le \theta(i+1))\\
=\theta(m)-\theta(1)
+\sum_{i=1}^{m-1}\left(1-\mychi(\theta(m)> \theta(i))\right)
-\sum_{i=1}^{m-1}\left(1-\mychi(\theta(1)>\theta(i+1))\right)
-(m-1)\sum_{i=1}^{m-1}\mychi(\theta(i)\le \theta(i+1))\\
=-(m-1)\sum_{i=1}^{m-1}\mychi(\theta(i)\le \theta(i+1))=-(m-1)A_\theta .
\end{multline*}
Let $\theta$ be in ${\Deq}_m$.
Because $\Delta_\theta(i)=\Delta_\theta(1)$ for $i\in [m-1]$, we have
$\sum_{i=1}^{m-1}\Delta_\theta(i)=(m-1)\Delta_\theta(1)$.
It follows that $\Delta_\theta(1)=-A_\theta$, that is, $\Delta_\theta(i)=-A_\theta$ for $i\in [m-1]$,
which implies $\theta\in{\Deq}_m'$.
\end{proof}

  The above alternative definition of ${\Deq}_m$ helps proving its
  closure property with respect to the shift.
We give its proof as below.

\begin{prp}\label{Prop2}
Suppose that $m\ge 3$. If 
$\theta'\in {\Sym}_m$ satisfies that
$\theta'\sim \theta$ for some $\theta\in {\Deq}_m$,
then $\theta'\in {\Deq}_m$. That is, 
$\widetilde{{\Deq}_m}={\Deq}_m$.
\end{prp}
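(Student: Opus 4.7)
The plan is to reduce to showing closure under a single application of $\lambda$ and then to analyze, case by case, how $\Delta_\theta(i)$ transforms under this shift. Since $\sim$ is generated by the cyclic group $\langle\lambda\rangle$, it suffices to prove that $\theta' := \lambda(\theta) \in \Deq_m$ whenever $\theta \in \Deq_m$; iterating $\lambda$ then yields the full shift-closure.

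Fix $\theta \in \Deq_m$ and let $j_0 \in [m]$ be the unique index with $\theta(j_0) = m$; then $\theta'(i) = \theta(i) + 1$ for $i \ne j_0$ and $\theta'(j_0) = 1$. I would compute $\Delta_{\theta'}(i) - \Delta_\theta(i)$ by separately tracking how each of the four ingredients $\theta(i+1) - \theta(i)$, $\mychi(\theta(m) \le \theta(i))$, $\mychi(\theta(1) \le \theta(i+1))$, and $\mychi(\theta(i) \le \theta(i+1))$ changes under the replacement $\theta \mapsto \theta'$. Each transformation depends only on whether one of $1$, $m$, $i$, $i+1$ coincides with $j_0$, and the increment of each ingredient takes only one of finitely many values (namely $0$, $\pm 1$, or $\pm m$ for the difference, and $0$ or $\pm 1$ for each indicator). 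The case analysis splits into $j_0 = 1$, $j_0 = m$, and $1 < j_0 < m$, with further sub-cases for $i \in \{j_0 - 1, j_0\}$ versus the remaining $i$.

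The expected outcome is that $\Delta_{\theta'}(i) - \Delta_\theta(i)$ equals a constant $c_{j_0} \in \{-1, 0, 1\}$ that depends on the case but is independent of $i$ (namely $c_{j_0} = -1$ for $j_0 = 1$, $c_{j_0} = +1$ for $j_0 = m$, and $c_{j_0} = 0$ for $1 < j_0 < m$). Hence constancy of $i \mapsto \Delta_\theta(i)$ transfers to $i \mapsto \Delta_{\theta'}(i)$, giving $\theta' \in \Deq_m$. The main obstacle is simply the bookkeeping: each indicator function behaves differently depending on the location of $j_0$ and on the coincidence of $i$ or $i+1$ with $j_0$, and the cancellation into a constant independent of $i$ only materializes after summing all four contributions in the definition \eqref{Deltateigisiki} of $\Delta$. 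As a consistency check, one may verify that $A_{\theta'} - A_\theta = -c_{j_0}$ in each of the three cases, which matches the alternative description $\Delta_\theta \equiv -A_\theta$ on $\Deq_m$ afforded by Lemma \ref{Lem1}.
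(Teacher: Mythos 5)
Your proposal is correct, and its core is the same computation as the paper's: reduce to a single application of $\lambda$, then track how each of the four ingredients of $\Delta_\theta(i)$ changes under the shift, with the cases governed by whether the preimage $j_0=\theta^{-1}(m)$ coincides with $1$, $m$, $i$ or $i+1$ (your three cases for $j_0$ with sub-cases for $i\in\{j_0-1,j_0\}$ are exactly the paper's $HT$/$Adj_i$ table, with the vacuous combinations matching its impossible Cases F and H). Where you diverge is the logical target: you only claim that $\Delta_{\theta'}(i)-\Delta_\theta(i)$ equals an $i$-independent constant $c_{j_0}$, which already suffices because ${\Deq}_m$ is defined by constancy of $i\mapsto\Delta_\theta(i)$; this lets you bypass Lemma \ref{Lem1} and the shift-invariance of the cyclic ascent count (Eq.~\eqref{marutto}) entirely. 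The paper instead proves the sharper identity $\Delta_{\theta'}(i)-\Delta_\theta(i)=\mychi(\theta'(m)\le\theta'(1))-\mychi(\theta(m)\le\theta(1))$ (Eq.~\eqref{closedproofeq}) and matches the result against $-A_{\theta'}$ via Lemma \ref{Lem1}; your claimed values $c_{j_0}=-1,+1,0$ for $j_0=1$, $j_0=m$, $1<j_0<m$ agree with that identity, and your consistency check $A_{\theta'}-A_\theta=-c_{j_0}$ is precisely Eq.~\eqref{marutto}. So your route is a mild but genuine streamlining: same bookkeeping, one fewer lemma. The only caveat is that the case analysis is sketched rather than executed, but the announced outcomes are the correct ones and each sub-case is a routine verification of the kind carried out in the paper.
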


\begin{proof}
It is obvious that $\widetilde{{\Deq}_m}\supset{\Deq}_m$.
We prove that 
$\widetilde{{\Deq}_m}\subset{\Deq}_m$.

For $\theta\in {\Sym}_m$,
  let $\theta' = \lambda(\theta)$ be the {\it shift-by-one\/} of
  $\theta$ defined in Eq.~\eqref{sihuto1teigisiki}, then it satisfies that
$$\theta'(i)=\begin{cases}
\theta(i)+1 & \ \text{if}\ \theta(i)\le m-1 , \\
1           & \ \text{if}\ \theta(i)=m
\end{cases}\ \ \text{for}\ i\in [m] .
$$
  To prove $\widetilde{{\Deq}_m}\subset{\Deq}_m$, it is enough to
  show that $\widetilde{\{ \theta \}} \subset{\Deq}_m$
  for each $\theta \in {\Deq}_m$.
  Since $\widetilde{\{ \theta \}} = \langle \lambda \rangle \cdot \theta$,
  showing that $\theta' = \lambda(\theta) \in {\Deq}_m$ suffices.

First of all, we show that
\begin{equation}\label{marutto}
A_\theta+\mychi(\theta(m)\le \theta(1))=A_{\theta'}+\mychi(\theta'(m)\le \theta'(1))
\end{equation}
for $\theta \in {\Sym}_m$.
We introduce
the
shorthand
$\ryakuki{j}:=\supermod{m}(j)$
for integers $j$.
Then
$$\ryakuki{j}=\begin{cases}
\Mod_m(j) & \ \text{if}\ j\not\equiv 0 \mod m , \\
m & \ \text{if}\ j\equiv 0 \mod m .
\end{cases}$$
By using this notation, the LHS and the RHS of Eq.~\eqref{marutto} are written as
\begin{equation}\label{marutto2}
\sum_{j\in [m]} \mychi(\theta(\ryakuki{j})\le\theta(\ryakuki{j+1})),
\quad
\sum_{j\in [m]} \mychi(\theta'(\ryakuki{j})\le\theta'(\ryakuki{j+1}))
\end{equation}
respectively.
For the validity of Eq.~\eqref{marutto},
let us show that both values of \eqref{marutto2} are equal as follows.

We put $k :=\theta^{-1}(m)$, i.e.,
$k$ satisfies $k\in [m]$ and $\theta(k)=m$.
  Using $k$, the first sum in \eqref{marutto2} is split to
  3 parts, $j = \ryakuki{k-1}$, $j = k$ and others, namely
  $$
    \mychi(\theta(\ryakuki{k-1})\le\theta(\ryakuki{k}))
   + \mychi(\theta(\ryakuki{k})\le\theta(\ryakuki{k+1}))
   + \sum_{j\in [m] \setminus \{k, \ryakuki{k-1} \}}
  \mychi(\theta(\ryakuki{j})\le\theta(\ryakuki{j+1})).
  $$
  The second sum for $\theta'$ in \eqref{marutto2}
  also allows the same decomposition.
  In evaluating the first and the second parts
  with respect to $\theta$, since
both $\theta(\ryakuki{k-1})$ and $\theta(\ryakuki{k+1})$
are different values from $\theta(k)$,
they are less than
$m = \theta(k)$.
Similarly,
for the corresponding parts with respect to $\theta'$, 
both $\theta'(\ryakuki{k-1})$ and $\theta'(\ryakuki{k+1})$
are greater than
$1 = \theta'(k)$.
Then, for $\theta$,
  the sum
$
\mychi(\theta(\ryakuki{k-1})\le \theta(\ryakuki{k}))
+\mychi(\theta(\ryakuki{k})\le \theta(\ryakuki{k+1}))
$
of the first and the second parts
is equal to
$\mychi(\theta(\ryakuki{k-1})\le m)
+\mychi(m\le \theta(\ryakuki{k+1}))
$.
For $\theta'$,
  the sum 
$
\mychi(\theta'(\ryakuki{k-1})\le \theta'(\ryakuki{k}))
+\mychi(\theta'(\ryakuki{k})\le \theta'(\ryakuki{k+1}))
$
of the corresponding two terms
is equal to
$
\mychi(\theta'(\ryakuki{k-1})\le 1)
+\mychi(1\le \theta'(\ryakuki{k+1}))
$. 
All of these are equal to $1$.

For the third part,
neither $\ryakuki{j}$ nor $\ryakuki{j+1}$ is $k$.
Thus, for this part with respect to $\theta'$, each term 
$\mychi(\theta'(\ryakuki{j})\le \theta'(\ryakuki{j+1}))$
is the same as
$\mychi(\theta(\ryakuki{j})+1\le \theta(\ryakuki{j+1})+1)$
which is equal to
$\mychi(\theta(\ryakuki{j})\le \theta(\ryakuki{j+1}))$.
  Therefore we conclude that both values of \eqref{marutto2} are equal, 
that is, Eq.~\eqref{marutto} is valid.

By Lemma \ref{Lem1} and Eq.~\eqref{marutto}, 
for  the validity of ``$\theta\in {\Deq}_m$ implies $\theta'\in{\Deq}_m$'',
it is enough 
to show
that
$$
\Delta_\theta(i)
-\mychi(\theta(m)\le \theta(1))=
\Delta_{\theta'}(i)-\mychi(\theta'(m)\le \theta'(1))
$$
or an equivalent equality, in the expanded and rearranged form,
\begin{multline}\label{closedproofeq}
\left(\theta(i+1)-\theta'(i+1)\right)
-\left(\theta(i)-\theta'(i)\right)
-(m-1)\left(\mychi(\theta(i)\le \theta(i+1))-\mychi(\theta'(i)\le \theta'(i+1))\right)\\
=\left(\mychi(\theta(m)\le \theta(1))-\mychi(\theta'(m)\le \theta'(1))\right)\\
-\left(\mychi(\theta(m)\le \theta(i))-\mychi(\theta'(m)\le \theta'(i))\right)
+\left(\mychi(\theta(1)\le \theta(i+1))-\mychi(\theta'(1)\le \theta'(i+1))\right)
\end{multline}
holds for $i\in [m-1]$.

Let us consider it for all possible cases separately.
First, we check what kind of cases can occur.
For each $i\in [m-1]$, we put $Adj_i:=(\mychi(\theta(i)=m),\mychi(\theta(i+1)=m))$.
Then
there are three cases of $Adj_i$: $(0,0)$, $(1,0)$ and $(0,1)$.
We also put $HT:=(\mychi(\theta(1)=m),\mychi(\theta(m)=m))$.
Then there are also three cases of $HT$: $(0,0)$, $(1,0)$ and $(0,1)$.
Therefore there are 9 possible cases, from Case A to Case I, in the following table
\begin{center}
\begin{tabular}{c|ccc}
$HT$
$\backslash$
$Adj_i$
& $(0,0)$ & $(1,0)$ & $(0,1)$ \\ \hline
$(0,0)$ & A & B & C \\
$(1,0)$ & D & E & F \\
$(0,1)$ & G & H & I \\
\end{tabular}\end{center}
but Case F and Case H are impossible.
So, we consider the remaining 7 cases below.

Before considering these 7 cases, we will find the value of the LHS of Eq.~\eqref{closedproofeq} in each of the three possible cases of $Adj_i$.


\noindent
The case of $Adj_i=(0,0)$:
In this case, it follows that
$\theta'(i)=\theta(i)+1$ and
$\theta'(i+1)=\theta(i+1)+1$.
Then
the LHS of Eq.~\eqref{closedproofeq} is $0$.


\noindent
The case of $Adj_i=(1,0)$:
In this case, it follows that
$\theta(i)=m$,
$\theta'(i)=1$ and 
$\theta'(i+1)=\theta(i+1)+1$.
Then
the LHS of Eq.~\eqref{closedproofeq} is 
\begin{multline}
\left(\theta(i+1)-\theta'(i+1)\right)-\left(\theta(i)-\theta'(i)\right)-(m-1)\left(\mychi(\theta(i)\le \theta(i+1))-\mychi(\theta'(i)\le \theta'(i+1))\right)\\ 
=
-1
-\left(m-1\right)
-(m-1)\left(\mychi(m\le \theta(i+1))-\mychi(1\le \theta'(i+1))\right)
=
-1
-\left(m-1\right)
-(m-1)\left(0-1\right)
\end{multline}
which is equal to $-1$.


\noindent
The case of $Adj_i=(0,1)$:
In this case, it follows that
$\theta'(i)=\theta(i)+1$,
$\theta(i+1)=m$ and
$\theta'(i+1)=1$. 
Then
the LHS of Eq.~\eqref{closedproofeq} is
$$(m-1)-(-1)
-(m-1)\left(\mychi(\theta(i)\le m)-\mychi(\theta'(i)\le 1)\right)
=
(m-1)-(-1)
-(m-1)\left(1-0\right)
$$
which is equal to $1$.

Using the value for each of the three cases above, 
let us show the validity of Eq.~\eqref{closedproofeq} for 7 cases separately as follows.

 
\noindent
Case A:
In this case, it follows that
$\theta'(i)=\theta(i)+1$,
$\theta'(i+1)=\theta(i+1)+1$,
$\theta'(1)=\theta(1)+1$ and
$\theta'(m)=\theta(m)+1$.
Then
the RHS of 
Eq.~\eqref{closedproofeq} is 
\begin{multline}
\left(\mychi(\theta(m)\le \theta(1))-\mychi(\theta'(m)\le \theta'(1))\right)\\
-\left(\mychi(\theta(m)\le \theta(i))-\mychi(\theta'(m)\le \theta'(i))\right)
+\left(\mychi(\theta(1)\le \theta(i+1))-\mychi(\theta'(1)\le \theta'(i+1))\right)\\
=
0-0+0
\end{multline}
which is equal to the LHS of Eq.~\eqref{closedproofeq}.
Therefore Eq.~\eqref{closedproofeq} holds.


\noindent
Case B:
In this case, it follows that
$\theta(i)=m$,
$\theta'(i)=1$,
$\theta'(i+1)=\theta(i+1)+1$,
$\theta'(1)=\theta(1)+1$,
$\theta'(m)=\theta(m)+1$
and that
$\theta(m)$ is not $m$, i.e., $\theta'(m)$ is not $1$.
Then the RHS of Eq.~\eqref{closedproofeq} is
$$0
-\left(\mychi(\theta(m)\le m)-\mychi(\theta'(m)\le 1)\right)
+0
=0
-\left(1-0\right)
+0
$$
which is equal to the LHS of Eq.~\eqref{closedproofeq}.


\noindent
Case C:
In this case, it follows that
$\theta'(i)=\theta(i)+1$,
$\theta(i+1)=m$,
$\theta'(i+1)=1$,
$\theta'(1)=\theta(1)+1$,
$\theta'(m)=\theta(m)+1$
and that 
$\theta(1)$ is not $m$, i.e.,
$\theta'(1)$ is not $1$.
Then the RHS of Eq.~\eqref{closedproofeq} is 
$$0
-0
+\left(\mychi(\theta(1)\le m)-\mychi(\theta'(1)\le 1)\right)\\
=(1-0)
$$
which is equal to the LHS of Eq.~\eqref{closedproofeq}.


\noindent
Case D:
In this case, it follows that
$\theta'(i)=\theta(i)+1$,
$\theta'(i+1)=\theta(i+1)+1$,
$\theta(1)=m$,
$\theta'(1)=1$,
$\theta'(m)=\theta(m)+1$
and that 
$\theta(i+1)$ is not $m$, i.e., 
$\theta'(i+1)$ is not $1$.
Similarly,
$\theta(m)$ is not $m$, i.e., 
$\theta'(m)$ is not $1$.
Therefore the RHS of Eq.~\eqref{closedproofeq} is 
$$
\left(\mychi(\theta(m)\le m)-\mychi(\theta'(m)\le 1)\right)
-0
+\left(\mychi(m\le \theta(i+1))-\mychi(1\le \theta'(i+1))\right)
=(1-0)-0+(0-1)
$$
which is equal to the LHS of Eq.~\eqref{closedproofeq}.

 
\noindent
Case E:
In this case, it follows that
$\theta(i)=m$,
$\theta'(i)=1$,
$\theta'(i+1)=\theta(i+1)+1$,
$\theta(1)=m$,
$\theta'(1)=1$,
$\theta'(m)=\theta(m)+1$ and that
$\theta(i+1)$ is not $m$, i.e., $\theta'(i+1)$ is not $1$.
Similarly,
$\theta(m)$ is not $m$, i.e., $\theta'(m)$ is not $1$.
Therefore the RHS of Eq.~\eqref{closedproofeq} is

\begin{multline}
\left(\mychi(\theta(m)\le m)-\mychi(\theta'(m)\le 1)\right)
-\left(\mychi(\theta(m)\le m)-\mychi(\theta'(m)\le 1)\right)\\
+\left(\mychi(m\le \theta(i+1))-\mychi(1\le \theta'(i+1))\right)
=(1-0)-(1-0)+(0-1)
\end{multline}
which is equal to the LHS of Eq.~\eqref{closedproofeq}.

 
\noindent
Case G:
In this case, it follows that
$\theta'(i)=\theta(i)+1$,
$\theta'(i+1)=\theta(i+1)+1$,
$\theta'(1)=\theta(1)+1$,
$\theta(m)=m$,
$\theta'(m)=1$ and that
$\theta(1)$ is not $m$, i.e., $\theta'(1)$ is not $1$.
Similarly,
$\theta(i)$ is not $m$, i.e.,  $\theta'(i)$ is not $1$.
Therefore the RHS of Eq.~\eqref{closedproofeq} is 
\begin{multline}
\left(\mychi(m\le \theta(1))-\mychi(1\le \theta'(1))\right)
-\left(\mychi(m\le \theta(i))-\mychi(1\le \theta'(i))\right)\\
+\left(\mychi(\theta(1)\le \theta(i+1))-\mychi(\theta'(1)\le \theta'(i+1))\right)
=(0-1)-(0-1)+0
\end{multline}
which is equal to the LHS of Eq.~\eqref{closedproofeq}.


\noindent
Case I:
In this case, it follows that
$\theta'(i)=\theta(i)+1$,
$\theta(i+1)=m$,
$\theta'(i+1)=1$,
$\theta'(1)=\theta(1)+1$,
$\theta(m)=m$,
$\theta'(m)=1$ and that 
$\theta(i)$ is not $m$, i.e., $\theta'(i)$ is not $1$.
Similarly,
$\theta(1)$ is not $m$, i.e., $\theta'(1)$ is not $1$.
Therefore the RHS of Eq.~\eqref{closedproofeq} is
\begin{multline}
\left(\mychi(m\le \theta(1))-\mychi(1\le \theta'(1))\right)
-\left(\mychi(m\le \theta(i))-\mychi(1\le \theta'(i))\right)\\
+\left(\mychi(\theta(1)\le m)-\mychi(\theta'(1)\le 1)\right)
=(0-1)-(0-1)+(1-0)
\end{multline}
which equal to the LHS of Eq.~\eqref{closedproofeq}.

From the above arguments, Eq.~\eqref{closedproofeq} holds for all possible cases and for each $i \in [m-1]$.
The proof is completed.
\end{proof}

\subsubsection{A property of ${\Rcr}_m$}

  In this subsection, some key facts on the structure of the set
  ${\Rcr}_m$ are shown.

For any elements $a, b$ in the ring $\mathbb{Z}$ of integers, let
  \begin{equation}\label{eq:thetaabdef}
  \theta_{a, b}(i) := \supermodtwo{m}{a i + b} \text{\ for \ }i \in [m],
  \end{equation}
  which defines a map $\theta_{a, b}:[m] \rightarrow [m].$

\begin{prp}\label{PropT}
  Suppose that $m \geq 2$.
  Let
  $\Rcr_m^{\mathrm{L}, b}
  := \{\theta_{a, b} : a \in [m], \gcd(a,m) = 1 \}$
  for $b \in \{0, 1\}$. The following holds:
  \begin{itemize}
  \item[(i)] It holds that
    $\Rcr_m^{\mathrm{L}, 0} \cap \Rcr_m^{\mathrm{L}, 1} = \emptyset$,
    that $\Rcr_m^{\mathrm{L}, b} \subset  \Rcr_m$ for $b = 0, 1$,
    and that
    $|\Rcr_m^{\mathrm{L}, b}| = \phi(m)$ for $b = 0, 1$.
    The cardinality of the set
    $\Rcr_m^{-} := \Rcr_m \setminus \Rcr_m^{\mathrm{L}, 1}$ 
    is
    $|\Rcr_m^{-}| = |\Rcr_m| - \phi(m).$ 
  \item[(ii)]   Let $m \geq 3$.
    Suppose that
    $\theta, \theta' \in \Rcr_m$ satisfy $\theta \sim \theta'$
    and $\theta \neq \theta'$. Then, exactly one of
    $\{\theta, \theta'\}$ is in $\Rcr_m^{\mathrm{L},{0}}$ and
    the other is in $\Rcr_m^{\mathrm{L},{1}}$.
    Especially, $\theta \sim \theta'$ for
    $\theta, \theta' \in \Rcr_m^{-}$
    implies  $\theta = \theta'.$ 
  \end{itemize}

\end{prp}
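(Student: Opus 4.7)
For part (i), the plan is to work directly from Eq.~\eqref{eq:thetaabdef}. The key computation is that for any $a$ coprime to $m$, $\theta_{a,0}(m) = \supermodtwo{m}{am} = m$ and $\theta_{a,1}(m) = \supermodtwo{m}{am+1} = 1$; this immediately gives $\Rcr_m^{\mathrm{L},0} \cap \Rcr_m^{\mathrm{L},1} = \emptyset$ for $m \ge 2$. To show $\Rcr_m^{\mathrm{L},b} \subset \Rcr_m$, observe that $\theta_{a,b}(i+1) - \theta_{a,b}(i) \equiv a \pmod m$ while $\theta_{a,b}(1) \equiv a+b \pmod m$; combined with the values of $\theta_{a,b}(m)$ just computed, the Iverson term $\mychi(\theta_{a,b}(m)\le \theta_{a,b}(i))$ evaluates identically to $b$ for every $i \in [m-1]$, so the defining congruence of $\Rcr_m$ is verified. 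Injectivity of $a \mapsto \theta_{a,b}$ on the residues coprime to $m$ is immediate by evaluating at $i=1$, giving $|\Rcr_m^{\mathrm{L},b}| = \phi(m)$, and $|\Rcr_m^{-}| = |\Rcr_m| - \phi(m)$ follows from $\Rcr_m^{\mathrm{L},1} \subset \Rcr_m$ together with the disjointness just established.

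For part (ii), I take $\theta, \theta' \in \Rcr_m$ with $\theta \sim \theta'$ and $\theta \ne \theta'$. By the equivalent definition \eqref{sihutodoutiteigino2}, there exists $k \in [m-1]$ with $\theta'(i) \equiv \theta(i)+k \pmod m$ for all $i \in [m]$. Subtracting the congruence \eqref{knowneq1} for $\theta$ from that for $\theta'$ yields
\[
\mychi(\theta'(m) \le \theta'(i)) - \mychi(\theta(m) \le \theta(i)) \equiv k \pmod m
\qquad (i \in [m-1]).
\]
Since the left-hand side lies in $\{-1,0,1\}$ and $m \ge 3$, this forces $k \in \{1, m-1\}$ and pins the relation to exact equality rather than merely a congruence.

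If $k=1$, the identity above forces $\theta(m) > \theta(i)$ for every $i \in [m-1]$, hence $\theta(m)=m$, and symmetrically $\theta'(m)=1$. Substituting $\theta(m)=m$ into \eqref{knowneq1} collapses it to $\theta(i+1)-\theta(i) \equiv \theta(1) \pmod m$, which telescopes to $\theta(i) = \supermodtwo{m}{i\theta(1)}$. Bijectivity of $\theta$ forces $\gcd(\theta(1),m)=1$, so $\theta = \theta_{\theta(1),0} \in \Rcr_m^{\mathrm{L},0}$; an analogous argument starting from $\theta'(m)=1$ yields $\theta' = \theta_{\theta'(1)-1,1} \in \Rcr_m^{\mathrm{L},1}$. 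The case $k=m-1$ is the same argument with the roles of $\theta$ and $\theta'$ interchanged. The final assertion of (ii) is then immediate: if $\theta,\theta' \in \Rcr_m^{-}$ were distinct and shift-equivalent, one of them would lie in $\Rcr_m^{\mathrm{L},1}$, contradicting $\Rcr_m^{-} \cap \Rcr_m^{\mathrm{L},1} = \emptyset$. The main subtlety throughout is to exploit $m \ge 3$ correctly: the subtraction of the two recurrences alone gives only a congruence modulo $m$, and it is the narrow range $\{-1,0,1\}$ of the indicator difference combined with $m \ge 3$ that upgrades this to an exact equality, pinning $\theta(m)$ to either $m$ or $1$; the identification with $\theta_{a,b}$ is then a routine telescoping.
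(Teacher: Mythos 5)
Your proof is correct and follows essentially the same route as the paper's: verify the congruence \eqref{knowneq1} for $\theta_{a,b}$ via the values $\theta_{a,0}(m)=m$, $\theta_{a,1}(m)=1$, then in (ii) subtract the two recurrences to force the indicator difference, and hence $k$, into $\{-1,0,1\}$ modulo $m\ge 3$, pinning $\theta$ and $\theta'$ down as the linear permutations $\theta_{a,0}$ and $\theta_{a,1}$. The only cosmetic differences are that you obtain disjointness of $\Rcr_m^{\mathrm{L},0}$ and $\Rcr_m^{\mathrm{L},1}$ directly from the value at $m$ rather than by solving the linear congruence, and that you exclude $k\equiv 0$ up front from $\theta\neq\theta'$ instead of treating it as a separate case.
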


\begin{proof}
\newcommand{\changedblue}[1]{\textcolor{blue}{#1}}
\newcommand{\changedgreen}[1]{\textcolor{teal}{#1}}
 (i)
    Well-known facts are that
    $\{a + m \mathbb{Z}: a \in [m], \gcd(a,m)=1\}$ 
    is a complete representative system
    of the invertible elements in the residue ring 
    $\mathbb{Z}/m\mathbb{Z}$, that
    $\theta_{a,0}\in \Sym_m$ if and only if $\gcd(a,m)=1$ and that
    $|\Rcr_m^{\mathrm{L}, 0}| = \phi(m)$.
    Since $\Rcr_m^{\mathrm{L}, 1}$ is the bijective image of
    $\Rcr_m^{\mathrm{L}, 0}$ by a shift, it has the same cardinality.
    Suppose that
    $a, a' \in \mathbb{Z}$ satisfy $\gcd(a,m)=\gcd(a',m)=1.$
    For $b, b' \in \mathbb{Z},$ the relation
    $\theta_{a,b} = \theta_{a', b'}$ implies
    $(a-a')i + (b-b') \equiv 0 \mod m$ for $i \in [m]$,
    then we have $b \equiv b' \mod m$ 
    by substituting $i$ with $m$,
    then $a \equiv a' \mod m$ by substituting $i$ with $1$. That is,
    $\theta_{a, b} = \theta_{a', b'}$ occurs for $a, a', \in [m],
    b, b' \in \{0\} \cup [m-1]$
    only if $(a,b)=(a',b')$. Thus, in particular,
    $\Rcr_m^{\mathrm{L}, 0} \cap \Rcr_m^{\mathrm{L}, 1} = \emptyset.$ 

    For all $\theta_{a,0} \in \Rcr_m^{\mathrm{L}, 0}, $ by
    $\supermodtwo{m}{\theta_{a,0}(m)}
    = \supermodtwo{m}{a m}   = \supermodtwo{m}{0} = m,$
    it follows that $\mychi(\theta_{a,0}(m) \leq \theta_{a,0}(i)) = 0\
    \text{\ for\ }i \in [m-1].$ 
    Thus, by
    \begin{equation}
    \theta_{a,0}(i+1) - \theta_{a,0}(i) \equiv
    a (i+1) - a i \equiv a 
    \equiv \theta_{a,0}(1) -
    \mychi(\theta_{a,0}(m) \leq \theta_{a,0}(i)) \mod m
    \ \text{\ for\ } i \in [m-1],
    \end{equation}
    we have $\theta_{a,0} \in \Rcr_m$, which implies
    $\Rcr_m^{\mathrm{L}, 0} \subset \Rcr_m$.

    And for $\theta_{a,1} \in \Rcr_m^{\mathrm{L}, 1}$, from
    $\theta_{a,1}(m) = \supermodtwo{m}{a m + 1} =  \supermodtwo{m}{1} = 1$,
    we have that
    $\mychi(\theta_{a,1}(m) \leq \theta_{a,1}(i)) = 1\
    (i \in [m-1])$ and that
    \begin{multline*}
    \theta_{a,1}(i+1) - \theta_{a,1}(i) \equiv
    a (i+1) + 1 - (a i + 1)\equiv a \\
    \equiv \theta_{a,1}(1) -
    \mychi(\theta_{a,1}(m) \leq \theta_{a,1}(i)) \mod m
    \ \text{\ for\ }  i \in [m-1],
    \end{multline*}
    which implies $\theta_{a,1} \in \Rcr_m$  and
    $\Rcr_m^{\mathrm{L}, 1} \subset \Rcr_m.$
    Thus,
    $|\Rcr_m^{-}| = |\Rcr_m| - |\Rcr_m^{\mathrm{L}, 1}| =
|\Rcr_m| - 
    \phi(m).$

(ii)
  Suppose that $\theta, \theta' \in \Rcr_m$ satisfy
  $\theta \sim \theta'$.
  Then
  by Eq.~\eqref{sihutodoutiteigino2},
  there exists $k \in \mathbb{Z}$ such that
\begin{equation}
  \theta'(i) \equiv \theta(i) + k \mod m \text{\ for\ } i \in [m].
\label{eq:simdef}
\end{equation}
By $\theta' \in \Rcr_m$, we have
\begin{equation}
 (\theta(i+1)+k) - (\theta(i)+k) \\ \equiv \theta'(1) - 
  \mychi(\theta'(m) \le \theta'(i)) \mod m 
  \text{\ for\ } i \in [m-1].
\end{equation}
Comparing it with the following congruence coming from $\theta \in \Rcr_m$,
\begin{equation}
\theta(i+1) - \theta(i)  \equiv \theta(1) - 
  \mychi(\theta(m) \le \theta(i)) \mod m 
  \text{\ for\ } i \in [m-1],
\label{eq:thetaeqv}
\end{equation}
we find that their LHS are the same, and we are led to
\begin{equation}
  \theta(1) - 
  \mychi(\theta(m) \le \theta(i))  \\
  \equiv
  \theta'(1) - 
  \mychi(\theta'(m) \le \theta'(i)) \mod m 
  \text{\ for\ } i \in [m-1].
\end{equation}
By rearranging terms, with recalling Eq.~\eqref{eq:simdef}, it follows that
\begin{equation}
  k \equiv \theta'(1) - \theta(1) 
  \equiv
  \mychi(\theta'(m) \le \theta'(i))
-  \mychi(\theta(m) \le \theta(i)) 
  \mod m \text{\ for\ } i \in [m-1].
\label{eq:Ceqv}
\end{equation}
The rightmost hand of the above takes only three values $0, 1, -1$. 
Thus, only the three cases $k \equiv 0, 1, -1 \mod m$ are possible.

When $k \equiv 0 \mod m$, Eq.~\eqref{eq:simdef} is
\begin{equation*}
  \theta'(i) \equiv \theta(i) \mod m \text{\ for\ } i \in [m],
\end{equation*}
then $\theta = \theta'$ by $\theta'(i), \theta(i) \in [m].$

When $k \equiv 1 \mod m,$ by Eq.~\eqref{eq:Ceqv},
\begin{equation*}
  1 \equiv 
  \mychi(\theta'(m) \le \theta'(i)) 
-  \mychi(\theta(m) \le \theta(i))
  \mod m  \text{\ for\ } i \in [m-1].
\end{equation*}
By $m \geq 3$ 
and the fact that the image of $\mychi$
is contained in $\{0, 1\}$, this is possible only if
$$
\mychi(\theta'(m) \le \theta'(i)) = 1 \ \text{and} \
\mychi(\theta(m) \le \theta(i)) = 0 \text{\ for\ } i \in [m-1].
$$
Then, from the latter half of the logical conjunction, with Eq.~\eqref{eq:thetaeqv},
$\theta(i+1)-\theta(i) \equiv \theta(1) \mod m
\text{\ for\ } i \in [m-1]$ follows. Put $a := \theta(1) \in [m]$.
Then we have that
$\theta(i) \equiv a i \mod m \text{\ for\ } i \in [m]$
and that $\gcd(a, m)=1$ by $\theta \in \Sym_m$.
Thus, $\theta = \theta_{a,0} \in \Rcr_m^{\mathrm{L}, 0}$.
Then, $\theta'(i) \equiv \theta(i) + 1 \mod m \text{\ for\ } i \in [m]$
implies $\theta' = \theta_{a,1}$.

When 
$k\equiv -1 \mod m$, swapping $\theta$ and $\theta'$ reduces
this case to the previous case $k \equiv 1 \mod m$. Thus we have
$\theta' = \theta_{a,0}, \theta = \theta_{a,1}$ by the same
argument.

At this point it has been shown that
one of $\theta, \theta'$ must belong to $\Rcr_m^{\mathrm{L},1}$
when
$\theta, \theta' \in \Rcr_m$ satisfy
$\theta \neq \theta', \theta \sim \theta'$.
Therefore, when
$\theta, \theta' \in \Rcr_m^{-}$  satisfy $\theta \sim \theta'$,
the equality
$\theta = \theta'$ must hold since none of
$\theta, \theta'$ belongs to $\Rcr_m^{\mathrm{L},1}$.
\end{proof}

\subsubsection{Proof of Theorem \ref{teiri2}}
We are now ready to provide the proof of Theorem \ref{teiri2} below.

\begin{proof}
By Theorem \ref{teiri1} and 
Proposition \ref{PropT}, 
there exists a set ${\Req}^-_m \subset {\Req}_m$
which satisfies the two conditions that 
``$|{\Req}^-_m|=|{\Req}_m|-\phi(m)$'' and ``$\theta\sim \theta'$ for $\theta,\theta'\in {\Req}^-_m$ implies $\theta=\theta'$.'' 
  From the latter condition, for each $\theta, \theta' \in {\Req}^-_m$,
  we have
  $\widetilde{\{\theta\}} \cap \widetilde{\{\theta'\}} = \emptyset$
  unless $\theta = \theta'$, since the existence of $\theta''$ in
  this intersection implies $\theta \sim \theta'' \sim \theta'$.
  Considering that the orbit $\widetilde{\{\theta\}}
= \langle \lambda \rangle \cdot \theta
  $
  for any $\theta \in \Sym_m$
  always contains exactly $m$ distinct elements corresponding to their values
  at 1, we have $|\widetilde{{\Req}^-_m}|=m|{\Req}^-_m|$
  from the disjointness. By a similar argument, with
  $\widetilde{{\Deq}_m} = {\Deq}_m$ of Proposition \ref{Prop2},
  we also have $|{\Deq}_m|=|\widetilde{{\Deq}_m}|=
  |\widetilde{{\Sym}_m^1\cap{\Deq}_m}|=m|{\Sym}_m^1\cap{\Deq}_m|$. 

  By Theorem \ref{teiri1},
we have ${\Req}^-_m \subset {\Deq}_m$. Thus,
taking the shift-closure and applying Proposition \ref{Prop2} again,
it follows that
$\widetilde{{\Req}^-_m } \subset \widetilde{{\Deq}_m} ={\Deq}_m$. In particular, we have
the inequality
$m|{\Req}^-_m|\le |{\Deq}_m|$ whose LHS is $m (|{\Req}_m|-\phi(m))$
from the former condition satisfied by ${\Req}^-_m$.

By Proposition \ref{Prop1} and by the injectivity of the map $\myPhi_m$,
we have 
$|{\Sym}_m^1\cap{\Deq}_m|=|\myPhi_m({\Sym}_m^1\cap{\Deq}_m)|
=|{\Req}_{m-1}|$.
Consequently, 
$m\left(|{\Req}_m|-\phi(m)\right) =m|{\Req}^-_m|\le m|{\Req}_{m-1}|$,
that is,
$|{\Req}_m|-|{\Req}_{m-1}| \le \phi(m)$.

Again by Theorem \ref{teiri1}, we have $|{\Rcr}_m|=|{\Req}_m|$.
It is easy to check that  ${\Req}_2={\Sym}_2$ and that
$|{\Req}_2|=2=\phi(1)+\phi(2)$, by a direct argument.
Therefore we conclude that 
$|{\Rcr}_m| \le \sum_{k=1}^m \phi(k)$
and that
$|{\Deq}_m|=m|{\Sym}_m^1\cap{\Deq}_m|=m|{\Req}_{m-1}| 
\le m\sum_{k=1}^{m-1} \phi(k)$.
\end{proof}

\section{An application}
\renewcommand{\myPhi}{\Psi}
\newcommand{\myPsi}{\Gamma}
\def \Sym {\frak{S}}
\def \Sos {\mathcal{S}}
\def \Sinv {\mathcal{S}^{*}}
\def \Rcr {\mathcal{V}}
\def \Rcrlike {V}
\def \Req {\mathcal{W}}
\def \pNAP {\mathcal{X}}
\def \Deq {\mathcal{Y}}
\def \CDS {\mathsf{CDS}}
\newcommand{\RefToThmXmVm}{Theorem B {}}
\newcommand{\RefToThmSmCard}{Theorem C {}}
\newcommand{\RefToThmSmClosCard}{Theorem D {}}
A key ingredient in the proof of Theorem \ref{teiri2} is
the use of the map $\myPhi_{m}$ which bijectively
connects the two sets $\Sym_m^1 \cap \Deq_{m}$ and
$\Rcr_{m-1}$ of permutations of {\it different\/} degrees.

In this section, we look at another building block
which connects $\Rcr_{m}$ and $\Sym_m^1 \cap \Deq_{m}$,
then combine it with $\myPhi_{m}$ to form a procedure
that lifts the elements of $\Rcr_{m-1}$ to $\Rcr_{m}$ in a
canonical manner. 
In this section we assume that $m \geq 3$ unless otherwise stated.
\subsection{Connecting $\Rcr_{m}$ and $\Sym_m^1 \cap \Deq_{m}$ by shift}
The building block we consider here is the map
\begin{equation}
  \myPsi_{m}: \Sym_{m} \ni \theta \mapsto
  \supermodtwo{m}{\theta(\cdot) - \theta(1) + 1}
   \in \Sym_{m}^{1}
\label{eq:psi}
\end{equation}
which applies a shift so that a permutations of degree $m$ is sent to
the permutation of the same degree having $1$ as a fixed point.
From the definition, it is obvious that $\myPsi_m$ preserves
the shift-equivalence class, i.e., $\theta \sim \myPsi_{m}(\theta)$.

Let us consider the restriction
$\myPsi_{m}|_{\Rcr_{m}}$. From the definition \eqref{eq:psi},
it holds that $\myPsi_{m}(\Rcr_{m})
\subset \widetilde{\Rcr_{m}} \cap \Sym_{m}^{1}.$
From Theorem \ref{teiri1},
$\widetilde{\Rcr_{m}} \subset \widetilde{\Deq}_m$ holds,
then by Proposition \ref{Prop2}, it follows that
$\widetilde{\Rcr_{m}} \subset \Deq_m$. Thus, $\myPsi_{m}|_{\Rcr_{m}}$
is a map
\begin{equation*}
  \myPsi_{m}|_{\Rcr_{m}}: \Rcr_{m} \rightarrow \Deq_m  \cap \Sym_{m}^{1}.
\end{equation*}

We note that
$\Deq_m \cap \Sym_{m}^{1}$ is a complete system
of the representatives for $\Deq_m /\sim, $ again by
Proposition \ref{Prop2} (i.e., for any $\theta \in \Deq_m$
there exists a unique $\theta' \in \Deq_m \cap \Sym_{m}^{1}$ such
that $\theta' \sim \theta$).

Then, let us consider what restriction does make $\myPsi_m$
injective.
As we have shown in Proposition \ref{PropT} (ii),
$\theta \sim \theta'$ for $\theta, \theta' \in \Rcr_{m}^{-}
= \Rcr_{m} \setminus \Rcr_{m}^{\mathrm{L}, 1}$ implies
$\theta = \theta'$. Since $\myPsi_m$ preserves the
equivalence class by $\sim$, $\myPsi_m(\theta) = \myPsi_m(\theta')$
implies $\theta \sim \theta'$. Combining these, the restriction
$\myPsi_{m}|_{\Rcr_{m}^{-}}$ is injective.

When the domain is restored to $\Rcr_{m}$, $\myPsi_{m}|_{\Rcr_{m}}$
is 
no longer injective. However, in  fact, the manner that a collision occurs is
quite well-controlled as shown below. Suppose that
$\theta, \theta' \in \Rcr_{m}, \theta \neq \theta'$ satisfy
$\myPsi_{m}(\theta) = \myPsi_{m}(\theta').$ Then
by the shift-equivalence preservation of $\myPsi_{m}$, we have
$\theta \sim \theta'$, however,
again by Proposition \ref{PropT} (ii), it
occurs if and only if one of $\theta, \theta'$
is in $\Rcr_{m}^{\mathrm{L},0}$
and the other is in $\Rcr_{m}^{\mathrm{L},1}$. In this case,
it is easy to see, by the shift-equivalence preservation and
$\myPsi_{m}(\Rcr_{m}) \subset \Sym_{m}^{1}$,
that
$\myPsi_{m}(\theta)(i)=\theta_{a,m-a+1}(i)$
for $\theta = \theta_{a,0} \in \Rcr_{m}^{\mathrm{L},0}$
and
$\theta = \theta_{a,1} \in \Rcr_{m}^{\mathrm{L},1}$,
where $\theta_{a,*}$ were defined in Eq.~\eqref{eq:thetaabdef}. 
On the other hand, if $\pi \in \Deq_m \cap \Sym_{m}^{1}$
is $\pi = \theta_{a,m-a+1}$ for some $a \in [m]$ with $\gcd(a,m) = 1$,
then it is clear $\pi = \myPsi_{m}(\theta_{a,0}) =
\myPsi_{m}(\theta_{a,1})$. Thus,
$(\myPsi_{m}|_{{\Rcr_m^{\mathrm{L}, 0} \sqcup \Rcr_m^{\mathrm{L}, 1}}})^{-1}(\{\pi\})
= \{\theta_{a,0}, \theta_{a,1} \}$ for $\pi \in \Deq_m \cap \Sym_{m}^{1}$
occurs if and only if
$\pi = \theta_{a,m-a+1}$ for some $a$ with $\gcd(a,m) = 1$.

For $\theta \in \Rcr_m^{\mathrm{L}, 0} \sqcup \Rcr_m^{\mathrm{L}, 1}$
and
$\theta' \in \Rcr_m \setminus (\Rcr_m^{\mathrm{L}, 0} \sqcup \Rcr_m^{\mathrm{L}, 1})$, $\myPsi_m(\theta)=\myPsi_m(\theta')$ is impossible
 as it implies $\theta \sim \theta'$ which is refuted by 
Proposition \ref{PropT} (ii) again.

We summarize these properties of $\myPsi_m$ and its restrictions
in the following lemma. 
For convenience, we introduce a notation for
  {\it the congruential difference set\/} for a sequence $\theta$ over $[m]$
  as
  \begin{equation}
    \CDS_m(\theta)
    := \{ \mathrm{Mod}_m(\theta(i+1)-\theta(i)) : i \in [m-1] \}.
  \end{equation}

\begin{lem}\label{lem:proj}
  Let $m \geq 3$.
  Let $\myPsi_{m}: \Sym_{m} \rightarrow \Sym_{m}^{1}$ be the map
  defined as \eqref{eq:psi}. Then, the following holds:
  \begin{itemize}
    \item[(i)] $\theta \sim \myPsi_m(\theta)$ for $\theta \in \Sym_m$.
    \item[(ii)] The restriction $\myPsi_{m}|_{\Rcr_m}$ is a map
      $ \Rcr_m \rightarrow \Deq_m \cap \Sym_{m}^{1}$.
    \item[(iii)] The restriction
      $\myPsi_{m}|_{\Rcr_m \setminus \Rcr_m^{\mathrm{L}, b}}$
      is an injection from $\Rcr_m \setminus \Rcr_m^{\mathrm{L}, b}$
      to $\Sym_m^1 \cap \Deq_m$ for each of $b = 0, 1$.
    \item[(iv)] The restriction
      $\myPsi_{m}|_{\Rcr_m^{\mathrm{L}, 0} \sqcup \Rcr_m^{\mathrm{L}, 1}}$
      is a $2:1$ surjection from
      $\Rcr_m^{\mathrm{L}, 0} \sqcup \Rcr_m^{\mathrm{L}, 1}$
      onto
      $\{ \theta_{a, m-a+1} : a \in [m], \gcd(a,m)=1\}$ whose
      cardinality is $\phi(m)$.
      And $(\myPsi_{m}|_{\Rcr_m^{\mathrm{L}, 0} \sqcup \Rcr_m^{\mathrm{L}, 1}})^{-1}(\{\theta_{a, m-a+1}\}) = \{\theta_{a,0}, \theta_{a,1}\}.$
    \item[(v)] $\myPsi_{m}(\Rcr_m^{\mathrm{L}, 0} \sqcup \Rcr_m^{\mathrm{L}, 1}) \cap \myPsi_{m}(\Rcr_m \setminus (\Rcr_m^{\mathrm{L}, 0} \sqcup \Rcr_m^{\mathrm{L}, 1})) = \emptyset.$
    \item[(vi)] The cardinality of the congruential
      difference set $\CDS_m(\myPsi_m(\theta))$
  for $\theta \in \Rcr_m$ is 1 if and only if
      $\theta \in \Rcr_m^{\mathrm{L}, 0} \sqcup \Rcr_m^{\mathrm{L}, 1}$.
  \end{itemize}
\end{lem}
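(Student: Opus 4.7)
The plan is to verify the six items in the stated order, mostly by collecting observations already scattered through the paragraphs preceding the lemma; only item (vi) requires substantive new work. Throughout I would lean on Theorem \ref{teiri1}, Proposition \ref{Prop2}, and in particular Proposition \ref{PropT}(ii), which pins down exactly how shift-equivalences can occur inside $\Rcr_m$.

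For (i), the formula \eqref{eq:psi} literally presents $\myPsi_m(\theta)$ as a shift of $\theta$ by the constant $1-\theta(1)$, so $\theta \sim \myPsi_m(\theta)$ is immediate from Eq.~\eqref{sihutodoutiteigino2}. For (ii), I would combine $\Rcr_m = \Req_m \subset \Deq_m$ from Theorem \ref{teiri1} with the shift-closure $\widetilde{\Deq_m} = \Deq_m$ from Proposition \ref{Prop2} to place $\myPsi_m(\theta)$ in $\Deq_m$; the containment $\myPsi_m(\theta) \in \Sym_m^1$ is built into the definition.

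Items (iii), (iv), and (v) are all essentially corollaries of Proposition \ref{PropT}(ii). For (iii) and (v), the preservation of $\sim$ by $\myPsi_m$ converts a collision $\myPsi_m(\theta) = \myPsi_m(\theta')$ into a shift-equivalence $\theta \sim \theta'$, and Proposition \ref{PropT}(ii) then rules out the nontrivial alternative either because both $\theta, \theta'$ are assumed outside $\Rcr_m^{\mathrm{L}, b}$ (for (iii)) or because only one side lies in the linear part (for (v)). For (iv) I would do a short direct computation: using $\theta_{a,0}(1) = a$ and $\theta_{a,1}(1) = a + 1$, valid because $\gcd(a,m) = 1$ together with $m \geq 3$ forces $a \leq m-1$, one checks that both $\theta_{a,0}$ and $\theta_{a,1}$ are sent to $\theta_{a, m-a+1}$, which yields the $2{:}1$ surjection onto a set of cardinality $\phi(m)$.

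The piece I expect to be the main obstacle is (vi), which is not directly a restatement of an earlier result. The idea is that $\myPsi_m$ is a shift, so the congruential differences, and hence the congruential difference set, coincide for $\theta$ and $\myPsi_m(\theta)$. This reduces the problem to showing that, for $\theta \in \Rcr_m$, $|\CDS_m(\theta)| = 1$ is equivalent to $\theta \in \Rcr_m^{\mathrm{L}, 0} \sqcup \Rcr_m^{\mathrm{L}, 1}$. Using the defining congruence of $\Rcr_m$, the CDS is a singleton precisely when $\mychi(\theta(m) \leq \theta(i))$ is constant in $i \in [m-1]$. The two constancy branches correspond respectively to $\theta(m) = m$ and $\theta(m) = 1$; in each branch, iterating the recurrence identifies $\theta$ with $\theta_{a,0}$ or $\theta_{a,1}$ for some $a$ with $\gcd(a,m) = 1$ (forced by $\theta \in \Sym_m$), and the converse direction is supplied by the constant-increment computation already made in (iv).
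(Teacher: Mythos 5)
Your proposal is correct, and for items (i)--(v) it follows the paper essentially verbatim: the paper proves these in the running text before the lemma (shift-equivalence preservation for (i), Theorem \ref{teiri1} plus Proposition \ref{Prop2} for (ii), and Proposition \ref{PropT}(ii) for (iii)--(v)), and your direct computation $\myPsi_m(\theta_{a,0})=\myPsi_m(\theta_{a,1})=\theta_{a,m-a+1}$ for (iv) is the same one the paper sketches. The only genuine divergence is in (vi). The paper's ``only if'' argument stays on the image side: from $\CDS_m(\myPsi_m(\theta))=\{a\}$ it writes $\myPsi_m(\theta)=\supermodtwo{m}{a\,\cdot+b}$, deduces $\gcd(a,m)=1$, chains shift-equivalences $\theta\sim\myPsi_m(\theta)\sim\theta_{a,0}\sim\myPsi_m(\theta_{a,0})$, uses the fact that $\Deq_m\cap\Sym_m^1$ is a complete representative system of $\Deq_m/\sim$ to get $\myPsi_m(\theta)=\myPsi_m(\theta_{a,0})$, and then invokes (v). You instead work directly on $\theta$: since $\CDS_m$ is shift-invariant, $|\CDS_m(\theta)|=1$ forces $\mychi(\theta(m)\le\theta(i))$ to be constant on $[m-1]$ (as $\Mod_m(\theta(1))\neq\Mod_m(\theta(1)-1)$ for $m\ge 2$), the two branches give $\theta(m)=m$ or $\theta(m)=1$, and iterating the recurrence yields $\theta=\theta_{\theta(1),0}$ or $\theta=\theta_{\theta(1)-1,1}$ with the $\gcd$ condition forced by bijectivity. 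Your version is more elementary in that it bypasses (v) and the representative-system property entirely and exhibits the parameters $(a,b)$ explicitly; the paper's version is shorter because it recycles (iv) and (v). Both are complete; the only small points worth making explicit in a write-up of your route are that $a:=\theta(1)-1\neq 0$ in the second branch (otherwise $\theta$ is constant modulo $m$) and that the image cardinality $\phi(m)$ in (iv) follows from the $2{:}1$ count on a domain of size $2\phi(m)$.
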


\begin{proof}
  We already have shown (i)-(v). For (vi),
  ``if'' part easily follows from (iv). For ``only if'' part,
  suppose that
  a $\theta \in \Rcr_m$ has the congruential difference set 
  $\CDS_m(\myPsi_m(\theta))$
  of cardinality 1. If the set is $\{a\},$
  then $\myPsi_m(\theta(i)) = \supermodtwo{m}{a i + b} \ (i \in [m])$
  for some $b \in \mathbb{Z},$ however, then $\gcd(a,m) = 1$
  holds because otherwise $\myPsi_m(\theta)$ is not a permutation.
  Thus, we have $\theta \sim \myPsi_m(\theta) \sim \theta_{a,0} \sim
  \myPsi_m(\theta_{a,0})$
  from (i). This implies $\myPsi_m(\theta) = \myPsi_m(\theta_{a,0})$
  because $\Deq_m \cap \Sym_m^1$ is a complete representative system of
  $\Deq_m/\sim$. From (v) and $\gcd(a,m)=1$, $\theta \in 
  \Rcr_m^{\mathrm{L}, 0} \sqcup \Rcr_m^{\mathrm{L}, 1}$
   holds.
\end{proof}

The above lemma is established in terms of $\Rcr_m$ and $\Deq_m$,
independent of the knowledge on $\Sinv_m$.
To complete the description of properties of $\Rcr_m$,
we will quote a few results on 
$\Sinv_m$ (Theorem \ref{Known2} and Corollary \ref{Cor1}).
\begin{lem}\label{lem:projplus}
  Let $m \geq 3.$
The following holds:
\begin{itemize}
\item[(i)]
  The injection
  $\myPsi_{m}|_{\Rcr_m \setminus \Rcr_m^{\mathrm{L}, b}}$
  is a bijection from
  $\Rcr_m \setminus \Rcr_m^{\mathrm{L}, b}$
      to $\Sym_m^1 \cap \Deq_m$ for each of $b = 0, 1$.
      The common cardinality of the domain and the image is
      $\sum_{k=1}^{m-1} \phi(k).$
\item[(ii)]
    For every $\theta \in
      \Rcr_m
      \setminus (  \Rcr_m^{\mathrm{L}, 0} \sqcup \Rcr_m^{\mathrm{L}, 1} ),$
  the congruential difference set
  $\CDS_m(\myPsi_m(\theta))$
  is of the form
  $\{a, a+1\}$ for some $a \in [m-1]$.
   Moreover,
   $\{\theta' \in \Deq_m \cap \Sym_m^1\ : \ \exists\ a \in [m-1]
   \text{\ s.t. } \CDS_m(\theta')
   = \{a, a+1\} \} $
    is the bijective image of
   $\Rcr_m
    \setminus (  \Rcr_m^{\mathrm{L}, 0} \sqcup \Rcr_m^{\mathrm{L}, 1})$
    by $\myPsi_m$.
\end{itemize}
\end{lem}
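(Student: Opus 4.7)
My plan is to prove (i) by a cardinality match and (ii) by tracking the congruential difference set under shifts.

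For (i), the fact that $\myPsi_m|_{\Rcr_m\setminus\Rcr_m^{\mathrm{L},b}}$ is an injection into $\Sym_m^1\cap\Deq_m$ is already recorded in Lemma \ref{lem:proj}(iii), so the task reduces to showing the two finite sets have equal cardinality. For the domain, I combine $\Rcr_m=\Sinv_m$ from Corollary \ref{Cor1}, $|\Sinv_m|=\sum_{k=1}^m\phi(k)$ from Theorem \ref{Known2}, and $|\Rcr_m^{\mathrm{L},b}|=\phi(m)$ from Proposition \ref{PropT}(i) to conclude $|\Rcr_m\setminus\Rcr_m^{\mathrm{L},b}|=\sum_{k=1}^{m-1}\phi(k)$ for $b\in\{0,1\}$. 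For the codomain, the closure $\widetilde{\Deq_m}=\Deq_m$ from Proposition \ref{Prop2} makes $\Sym_m^1\cap\Deq_m$ a complete system of $\sim$-class representatives for $\Deq_m$ with every orbit of size exactly $m$, so $|\Sym_m^1\cap\Deq_m|=|\Deq_m|/m$; then $\Deq_m=\widetilde{\Sinv_m}$ from Corollary \ref{Cor1} and $|\widetilde{\Sinv_m}|=m\sum_{k=1}^{m-1}\phi(k)$ from Theorem \ref{Known3} give the codomain cardinality $\sum_{k=1}^{m-1}\phi(k)$, turning the injection into a bijection.

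For the first claim of (ii), the key observation is that $\CDS_m$ is shift-invariant: since $\myPsi_m(\theta)(i+1)-\myPsi_m(\theta)(i)\equiv\theta(i+1)-\theta(i) \mod m$, we have $\CDS_m(\myPsi_m(\theta))=\CDS_m(\theta)$. For $\theta\in\Rcr_m$ the defining congruence forces $\CDS_m(\theta)\subseteq\{\Mod_m(\theta(1)),\Mod_m(\theta(1)-1)\}$, and since $\theta$ is a permutation of $[m]$, each difference $\theta(i+1)-\theta(i)$ lies in $[-(m-1),m-1]\setminus\{0\}$, so $0\notin\CDS_m(\theta)$. For $\theta\notin\Rcr_m^{\mathrm{L},0}\sqcup\Rcr_m^{\mathrm{L},1}$, Lemma \ref{lem:proj}(vi) gives $|\CDS_m(\myPsi_m(\theta))|=2$, so both candidate residues must be nonzero; this forces $\theta(1)\notin\{1,m\}$, hence $\theta(1)\in\{2,\ldots,m-1\}$ and $\CDS_m(\myPsi_m(\theta))=\{\theta(1)-1,\theta(1)\}=\{a,a+1\}$ with $a=\theta(1)-1\in[m-2]\subseteq[m-1]$.

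For the ``Moreover'' part, one inclusion is exactly what was just shown. For the reverse, given $\theta'\in\Deq_m\cap\Sym_m^1$ with $\CDS_m(\theta')=\{a,a+1\}$, the bijection from (i) supplies a unique $\theta\in\Rcr_m\setminus\Rcr_m^{\mathrm{L},1}$ with $\myPsi_m(\theta)=\theta'$. If $\theta$ were in $\Rcr_m^{\mathrm{L},0}$, then Lemma \ref{lem:proj}(iv) would yield $\theta'=\theta_{a',m-a'+1}$ for some $a'$ coprime to $m$, whose consecutive differences are constantly $a'$, making $\CDS_m(\theta')$ a singleton and contradicting $|\CDS_m(\theta')|=2$. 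The most delicate step I foresee is the case analysis excluding $\theta(1)\in\{1,m\}$, which requires simultaneously using the nonvanishing of permutation differences modulo $m$ and the size-two guarantee from Lemma \ref{lem:proj}(vi); the remainder is essentially bookkeeping with the structural facts already in hand.
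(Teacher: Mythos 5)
Your proposal is correct, and both parts follow the same overall strategy as the paper: for (i), injectivity from Lemma \ref{lem:proj}(iii) plus a cardinality match; for (ii), shift-invariance of $\CDS_m$, the singleton/two-element dichotomy of Lemma \ref{lem:proj}(vi), and the bijectivity from (i). The one substantive divergence is in how you count $|\Sym_m^1 \cap \Deq_m|$: you compute it directly as $|\Deq_m|/m$ via Proposition \ref{Prop2}, then evaluate $|\Deq_m| = |\widetilde{\Sinv_m}| = m\sum_{k=1}^{m-1}\phi(k)$ using Corollary \ref{Cor1} together with Theorem \ref{Known3}. The paper instead gets the lower bound $|\Deq_m \cap \Sym_m^1| \geq \sum_{k=1}^{m-1}\phi(k)$ from the injection itself and the matching upper bound from the inequality already established at the end of the proof of Theorem \ref{teiri2}, so it never invokes Theorem \ref{Known3}. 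This matters only for the bookkeeping of dependencies: the remark following the lemma emphasizes that the proof needs nothing beyond $|\Rcr_m| \geq \sum_{k=1}^{m}\phi(k)$, a minimality your route gives up by importing the enumeration of $\widetilde{\Sinv_m}$. In part (ii) your argument is, if anything, more explicit than the paper's: you pin down $\CDS_m(\myPsi_m(\theta)) = \{\theta(1)-1, \theta(1)\}$ by excluding $0$ from the difference set and ruling out $\theta(1) \in \{1, m\}$, whereas the paper argues more set-theoretically by partitioning $\Deq_m \cap \Sym_m^1$ into the singleton-CDS and two-element-CDS classes and matching images; both are sound, and your exclusion of $\Rcr_m^{\mathrm{L},0}$ via Lemma \ref{lem:proj}(iv) in the reverse inclusion is a valid substitute for the paper's direct appeal to Lemma \ref{lem:proj}(v) and (vi).
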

\begin{proof}
(i)
  By Theorem \ref{Known2} and Corollary \ref{Cor1},
  $|\Rcr_{m}| = |\Sinv_{m}| = \sum_{k=1}^{m} \phi(k)$. 
  Thus we have, for each of $b=0, 1$, 
  $|\Rcr_m \setminus \Rcr_m^{\mathrm{L}, b}| =
    \sum_{k=1}^{m} \phi(k) - \phi(m) = \sum_{k=1}^{m-1} \phi(k).$
    Since the map
    $\myPsi_{m}|_{\Rcr_m \setminus \Rcr_m^{\mathrm{L}, b}}$
      is injective as seen in Lemma \ref{lem:proj} (iii),
      we have $|\Deq_m \cap \Sym_m^1| \geq \sum_{k=1}^{m-1} \phi(k)$.
      The reversed
      inequality (multiplied by $m$) has already been 
      shown in the very end of the proof of
      Theorem \ref{teiri2}, yielding the equality
      $|\Deq_m \cap \Sym_{m}^{1}|=\sum_{k=1}^{m-1} \phi(k)$.
    Therefore, $\myPsi_{m}|_{\Rcr_m \setminus \Rcr_m^{\mathrm{L}, b}}$
      is bijective for $b=0,1$.

(ii)
  Let
  $A' :=\{ \theta' \in \Deq_m \cap \Sym_m^1 \ : 
\  \exists\ a \in [m-1] \text{\ s.t. } \CDS_m(\theta')
   = \{a\} \} $
  and
  $A'' :=\{ \theta' \in \Deq_m \cap \Sym_m^1 \ : 
\  \exists\ a \in [m-1] \text{\ s.t. } \CDS_m(\theta')
   = \{a, a+1\} \} $
   respectively.
  By the definition of $\Rcr_m$,
namely the defining congruential recurrence \eqref{knowneq1},
   the congruential difference set 
   $\CDS_m(\theta)$
   is of the form either
   $\{a, a+1\}$ or $\{a\}$ for some $a \in [m-1],$ for all
   $\theta \in \Rcr_m$. Because $\myPsi_m(\theta)$ is a shift of $\theta$,
   it follows that
   $\CDS_m(\myPsi_m(\theta)) = \CDS_m(\theta).$
   On the other hand, the assertion (i) just proved
   and Lemma \ref{lem:proj} (ii) tell
   us that $\myPsi_m(\Rcr_m) = \Deq_m \cap \Sym_m^1$
   in particular.
   Hence, we have
   \begin{equation}\label{zounosabunhadotirakaninarutousiki}
   \Deq_m \cap \Sym_m^1 =\myPsi_m(\Rcr_m) = A' \sqcup A''.
   \end{equation}
   By Lemma \ref{lem:proj} (vi), for  $\theta \in \Rcr_m$,
   $\CDS_m(\myPsi_m(\theta)) $ is a singleton $\{a\}$ if and only if
   $\theta \in \Rcr_m^{\mathrm{L}, 0} \sqcup \Rcr_m^{\mathrm{L}, 1}$.
   In other words, it follows that
   \begin{equation}\label{zounosabunhasingurutontousiki}
   \myPsi_m(\Rcr_m^{\mathrm{L}, 0} \sqcup \Rcr_m^{\mathrm{L}, 1})= A'.
   \end{equation}
   From Eqs.~\eqref{zounosabunhadotirakaninarutousiki},
   \eqref{zounosabunhasingurutontousiki} and
   Lemma \ref{lem:proj} (v), it follows that
   $\myPsi_m(\Rcr_m \setminus (\Rcr_m^{\mathrm{L}, 0} \sqcup \Rcr_m^{\mathrm{L}, 1})) = A'',$
   i.e.,
$\myPsi_m|_{\Rcr_m \setminus (\Rcr_m^{\mathrm{L}, 0} \sqcup \Rcr_m^{\mathrm{L}, 1})}: \Rcr_m \setminus (\Rcr_m^{\mathrm{L}, 0} \sqcup \Rcr_m^{\mathrm{L}, 1}) \rightarrow A''$
is a surjection.
   For the bijectivity,
  $\myPsi_m|_{\Rcr_m \setminus (\Rcr_m^{\mathrm{L}, 0} \sqcup \Rcr_m^{\mathrm{L}, 1})}$ is injective, again by the assertion (i)
   and
   $\Rcr_m \setminus (\Rcr_m^{\mathrm{L}, 0} \sqcup \Rcr_m^{\mathrm{L}, 1})
 \subset \Rcr_m \setminus \Rcr_m^{\mathrm{L}, 1}
   $.
\end{proof}
\begin{rem}
  The validity of Lemmas \ref{lem:proj} and \ref{lem:projplus}
  is able to be verified for the case $m=2$ where
  $\Rcr_m = \Deq_m = \Sym_2$, by a direct argument.
\end{rem}  
\begin{rem}
  The above proof of (i) depends on 
  Theorem \ref{Known2}\ and Corollary \ref{Cor1}.
  However, what we really needed in the proof
  was the fact
  $|\Rcr_m| \geq \sum_{k=1}^{m} \phi(k)$ only. We used 
  no detailed property of $\Sinv_m, \Sos_m$ nor Farey sequence associated
  to them.
\end{rem}

\subsection{The lifting procedure}
Now we have the building block $\myPsi_{m}|_{\Rcr_m}$
which maps $\Rcr_m$ of cardinality $\sum_{k=1}^{m} \phi(k)$
onto $\Sym_m^1 \cap \Deq_m$ of cardinality $\sum_{k=1}^{m-1} \phi(k)$.
It is not injective but the collisions are well-controlled, as mentioned in
Lemma \ref{lem:proj}.

On the other hand, we already have had another ingredient, the bijection
$\myPhi_m|_{\Sym_m^1 \cap \Deq_m}: \Sym_m^1 \cap \Deq_m \rightarrow
\Rcr_{m-1}.$ Then, taking the composition,
we obtain the surjection
$\myPhi_m|_{\Sym_m^1 \cap \Deq_m} \circ \myPsi_{m}|_{\Rcr_m}
: \Rcr_m \rightarrow \Rcr_{m-1}$ in which the only
source of collisions is the $2:1$ property of 
$\myPsi_{m}|_{\Rcr_m^{\mathrm{L},0}  \sqcup \Rcr_m^{\mathrm{L},1}}$.

The composition projects $\Rcr_m$ to $\Rcr_{m-1}$ nicely.
Indeed,
up to the $2:1$ case of Lemma \ref{lem:proj} 
(iv), the projection is invertible.
Thus we are led to the following procedure to lift
the permutations $\pi$ in $\Rcr_{m-1}$ to form
$\Rcr_{m}$.

\begin{proc}\label{proc:genproc}
  Given $m \geq 2,$ do the following:
\begin{itemize}
\item[\tt 1]  Let $X := \emptyset;$
\item[\tt 2] For each $\pi \in \Rcr_{m-1}$ do $\{$
\item[\tt 3] $\quad$ Let $\theta_\pi := \myPhi_m^{-1}(\pi);$
\item[\tt 4] $\quad$ If $
\CDS_m(\theta_\pi)
  = \{a\}$ then
\item[\tt 5] $\quad\quad \{$ \ Let $Y := \{\theta_{a,0}, \theta_{a,1}\}; \ \}$
\item[\tt 6] $\quad$ If $
\CDS_m(\theta_\pi)
  = \{a, a+1\}$ then
\item[\tt 7]  $\quad\quad \{$ \ Let
  $\hat{\theta} := \supermodtwo{m}{\theta_\pi(\cdot) + a}$ and
  let $Y := \{\hat{\theta}\}; \  \}$
\item[\tt 8] $\quad$ Let $X := X \cup Y$.
\item[\tt 9] $\}$
\item[\tt 10] Let $\Rcr_{m} := X$.
\end{itemize}
Here we put
    ${\Rcr}_{1} := \Sym_1, {\Req}_{1} := \Sym_1$ and ${\Deq}_{2} := \Sym_2$,
    for which Proposition \ref{Prop1} holds,
    when $m=2$.
\end{proc}

\begin{thm}
  Procedure \ref{proc:genproc} lifts $\Rcr_{m-1}$ to
  $\Rcr_{m}$ correctly.
\end{thm}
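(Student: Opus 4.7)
The plan is to recognize Procedure \ref{proc:genproc} as computing, for each $\pi \in \Rcr_{m-1}$, the fiber over $\pi$ of the surjection $F_m := \myPhi_m|_{\Sym_m^1 \cap \Deq_m} \circ \myPsi_m|_{\Rcr_m}: \Rcr_m \to \Rcr_{m-1}$, whose existence and structure are already worked out by the preceding lemmas. I will focus on the case $m \ge 3$, the stipulated base case $m=2$ being directly verifiable. By Proposition \ref{Prop1} together with Theorem \ref{teiri1} (in particular $\Req_{m-1} = \Rcr_{m-1}$), $\myPhi_m$ restricts to a bijection $\Sym_m^1 \cap \Deq_m \to \Rcr_{m-1}$, so line 3 correctly recovers the unique $\theta_\pi \in \Sym_m^1 \cap \Deq_m$ with $\myPhi_m(\theta_\pi) = \pi$. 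By Lemma \ref{lem:projplus} (i), $\myPsi_m|_{\Rcr_m}$ is surjective onto $\Sym_m^1 \cap \Deq_m$, so $F_m$ is surjective and its fibers partition $\Rcr_m$. The task thus reduces to checking that lines 4--7 correctly compute $(\myPsi_m|_{\Rcr_m})^{-1}(\{\theta_\pi\})$ in the two branches.

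For the singleton case $\CDS_m(\theta_\pi) = \{a\}$, I would appeal to Lemma \ref{lem:proj} (vi), which characterizes this situation as exactly the one where the preimage lies in $\Rcr_m^{\mathrm{L},0} \sqcup \Rcr_m^{\mathrm{L},1}$; since $\theta_\pi(1)=1$ with constant increment $a$ modulo $m$, one reads off $\theta_\pi = \theta_{a,\, m-a+1}$ with $\gcd(a,m)=1$, and Lemma \ref{lem:proj} (iv) immediately identifies the fiber as $\{\theta_{a,0}, \theta_{a,1}\}$, which is precisely the set $Y$ built in line 5.

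For the two-element case $\CDS_m(\theta_\pi) = \{a, a+1\}$, Lemma \ref{lem:projplus} (ii) combined with the injectivity in Lemma \ref{lem:proj} (iii) shows the fiber is a single element $\theta^* \in \Rcr_m \setminus (\Rcr_m^{\mathrm{L},0} \sqcup \Rcr_m^{\mathrm{L},1})$; by the shift-equivalence preservation of $\myPsi_m$ (Lemma \ref{lem:proj} (i)), $\theta^* = \supermodtwo{m}{\theta_\pi(\cdot) + k}$ for some unique $k \in \{0, \ldots, m-1\}$. The main obstacle is pinning down that $k = a$. My plan is to use that the defining congruential recurrence \eqref{knowneq1} of $\Rcr_m$ restricts the differences of $\theta^*$ modulo $m$ to $\{\theta^*(1),\, \theta^*(1) - 1\}$, while shift-invariance of differences gives $\CDS_m(\theta^*) = \CDS_m(\theta_\pi) = \{a, a+1\}$; the hypothesis $m \ge 3$ rules out any modular wraparound collapsing these two elements into a non-consecutive pair, forcing $\theta^*(1) = a+1$, whence $k = a$ and $\theta^* = \hat\theta$, the element built in line 7.

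Assembling the per-$\pi$ identifications, the set produced by line 10 is $X = \bigsqcup_{\pi \in \Rcr_{m-1}} F_m^{-1}(\{\pi\}) = \Rcr_m$, as required. The only delicate ingredient beyond direct invocation of the prior lemmas is the shift-determination step in the two-element case; every other bookkeeping step is a straightforward unwinding of the definitions of $\myPhi_m$, $\myPsi_m$, and $\theta_{a,b}$.
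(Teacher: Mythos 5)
Your proposal is correct and follows essentially the same route as the paper's proof: line 3 via Proposition \ref{Prop1} and Theorem \ref{teiri1}, the case split via Lemma \ref{lem:proj} (vi) and Lemma \ref{lem:projplus} (ii), the singleton fiber via Lemma \ref{lem:proj} (iv), and the shift amount $k=a$ pinned down by matching $\CDS_m(\theta^*)=\{a,a+1\}$ against the two admissible difference values $\theta^*(1)$ and $\theta^*(1)-1$, exactly as the paper does by taking the maximum of that two-element set.
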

\begin{proof}
  In the line 3, $\theta_\pi \in \Sym_{m}^1 \cap \Deq_m$
is uniquely determined
    since 
    $\myPhi_m: \Sym_m^1 \rightarrow
    \Sym_{m-1}$ is bijective and so is
    $\myPhi_m|_{\Sym_m^1 \cap \Deq_m}: \Sym_m^1 \cap \Deq_m \rightarrow
      \Rcr_{m-1}$ by Proposition \ref{Prop1} and Theorem \ref{teiri1}.
    From 
Lemma \ref{lem:proj} (vi) and Lemma \ref{lem:projplus} (ii),
exactly one ``If'' statement of the lines 4 and 6 is the case.
When the statement in the line 4 is the case,
$\theta_\pi = \theta_{a, b}$ for some $b \in [m]$, however,
$b = m-a+1$ from the condition $\theta_\pi(1) = 1.$ Thus, from
Lemma \ref{lem:proj} (iv) 
and (vi),
$Y = (\myPsi_m|_{\Rcr_m})^{-1}(\{ \theta_\pi \})$ holds in the line 5.
On the other hand, when the statement in the line 6 holds,
by Lemma \ref{lem:projplus} (ii),
a unique $\theta' := \myPsi_m^{-1}(\theta_\pi) \in 
\Rcr_m \setminus (\Rcr_m^{\mathrm{L}, 0} \sqcup \Rcr_m^{\mathrm{L}, 1})$
exists.
From $\theta' \in \Rcr_m$ and the shift-equivalence
$\theta'(i+1) - \theta'(i) \equiv \theta_\pi(i+1) - \theta_\pi(i) \mod m \ (i \in [m-1])$,
it follows that
$\{ \supermodtwo{m}{\theta'(1) - \mychi(\theta'(m) \le \theta'(i))} : i \in [m-1]\} = \{ \supermodtwo{m}{\theta'(i+1) - \theta'(i)} : i \in [m-1]\}
= \{ \supermodtwo{m}{\theta_\pi(i+1) - \theta_\pi(i)} : i \in [m-1]\}
= \{a, a+1\}$. Thus, taking the maximum of the last set of cardinality 2,
$\theta'(1) = a + 1$ which is $a + \theta_\pi(1)$ by $\theta_\pi \in \Sym_m^1$.
Therefore, $\theta' = \supermodtwo{m}{\theta_\pi(\cdot) + a},$
which is nothing but $\hat{\theta}$ in the line 7.

Thus the lines 4,5,6 and 7 together compute the correct inverse image
$Y=(\myPsi_m|_{\Rcr_m})^{-1}(\{\theta_\pi\})$ for $\theta_\pi=\myPhi_m^{-1}(\pi)$.
Then the set $X$ in the line 10 is
$\bigcup_{\pi \in \Rcr_{m-1}} (\myPsi_m|_{\Rcr_m})^{-1}(\{\myPhi_m^{-1}(\pi)\})
= (\myPhi_m|_{\Deq_m \cap \Sym_m^1}\circ \myPsi_m|_{\Rcr_m})^{-1}(\Rcr_{m-1})
= \Rcr_m$.
\end{proof}  

\subsection{Recursion of the lifting}\label{ss:recursionofthelifting}
\begin{figure}[t]
\centering
  \rotatebox{0}{\scalebox{0.65}{
   \begin{tikzpicture}
     \tikzset{level distance=40pt}
\Tree [ .\node(V1){$\begin{matrix}\mathbf{1}\\\end{matrix}$};
 [ .\node(Y2){$\begin{matrix}\mathbf{12}\\\end{matrix}$};
  [ .\node(V2){$\begin{matrix}\mathbf{12}^{(0)}\\\end{matrix}$};
   [ .\node(Y3){$\begin{matrix}\mathbf{123}\\\end{matrix}$};
    [ .\node(V3){$\begin{matrix}\mathbf{123}^{(0)}\\\end{matrix}$};
     [ .\node(Y4){$\begin{matrix}\mathbf{1234}\\\end{matrix}$};
      [ .\node(V4){$\begin{matrix}\mathbf{1234}^{(0)}\\\end{matrix}$};
       [ .\node(Y5){$\begin{matrix}\mathbf{12345}\\\end{matrix}$};
        [ .\node(V5){$\begin{matrix}\mathbf{12345}^{(0)}\\\end{matrix}$};
         [ .\node(Y6){$\begin{matrix}\mathbf{123456}\\\end{matrix}$};
          [ .\node(V6){$\begin{matrix}\mathbf{123456}^{(0)}\\\end{matrix}$}; \edge[dashed] node [auto=left] {};{$\cdot$} ]
          [ .\node[]{$\begin{matrix}\mathbf{234561}^{(1)}\\\end{matrix}$}; \edge[dashed] node [auto=left] {};{$\cdot$} ]
         ]
        ]
        [ .\node[]{$\begin{matrix}\mathbf{23451}^{(1)}\\\end{matrix}$};
         [ .\node[]{$\begin{matrix}\mathbf{134562}\\\end{matrix}$};
          [ .\node[]{$\begin{matrix}\mathbf{245613}\\\end{matrix}$}; \edge[dashed] node [auto=left] {};{$\cdot$} ]
         ]
        ]
       ]
      ]
      [ .\node[]{$\begin{matrix}\mathbf{2341}^{(1)}\\\end{matrix}$};
       [ .\node[]{$\begin{matrix}\mathbf{13452}\\\end{matrix}$};
        [ .\node[]{$\begin{matrix}\mathbf{24513}\\\end{matrix}$};
         [ .\node[]{$\begin{matrix}\mathbf{135624}\\\end{matrix}$};
          [ .\node[]{$\begin{matrix}\mathbf{246135}\\\end{matrix}$}; \edge[dashed] node [auto=left] {};{$\cdot$} ]
         ]
        ]
       ]
      ]
     ]
    ]
    [ .\node[]{$\begin{matrix}\mathbf{231}^{(1)}\\\end{matrix}$};
     [ .\node[]{$\begin{matrix}\mathbf{1342}\\\end{matrix}$};
      [ .\node[]{$\begin{matrix}\mathbf{2413}\\\end{matrix}$};
       [ .\node[]{$\begin{matrix}\mathbf{13524}\\\end{matrix}$};
        [ .\node[]{$\begin{matrix}\mathbf{24135}^{(0)}\\\end{matrix}$};
         [ .\node[]{$\begin{matrix}\mathbf{135246}\\\end{matrix}$};
          [ .\node[]{$\begin{matrix}\mathbf{351462}\\\end{matrix}$}; \edge[dashed] node [auto=left] {};{$\cdot$} ]
         ]
        ]
        [ .\node[]{$\begin{matrix}\mathbf{35241}^{(1)}\\\end{matrix}$};
         [ .\node[]{$\begin{matrix}\mathbf{146352}\\\end{matrix}$};
          [ .\node[]{$\begin{matrix}\mathbf{362514}\\\end{matrix}$}; \edge[dashed] node [auto=left] {};{$\cdot$} ]
         ]
        ]
       ]
      ]
     ]
    ]
   ]
  ]
  [ .\node[]{$\begin{matrix}\mathbf{21}^{(1)}\\\end{matrix}$};
   [ .\node[]{$\begin{matrix}\mathbf{132}\\\end{matrix}$};
    [ .\node[]{$\begin{matrix}\mathbf{213}^{(0)}\\\end{matrix}$};
     [ .\node[]{$\begin{matrix}\mathbf{1324}\\\end{matrix}$};
      [ .\node[]{$\begin{matrix}\mathbf{3142}\\\end{matrix}$};
       [ .\node[]{$\begin{matrix}\mathbf{14253}\\\end{matrix}$};
        [ .\node[]{$\begin{matrix}\mathbf{31425}^{(0)}\\\end{matrix}$};
         [ .\node[]{$\begin{matrix}\mathbf{142536}\\\end{matrix}$};
          [ .\node[]{$\begin{matrix}\mathbf{415263}\\\end{matrix}$}; \edge[dashed] node [auto=left] {};{$\cdot$} ]
         ]
        ]
        [ .\node[]{$\begin{matrix}\mathbf{42531}^{(1)}\\\end{matrix}$};
         [ .\node[]{$\begin{matrix}\mathbf{153642}\\\end{matrix}$};
          [ .\node[]{$\begin{matrix}\mathbf{426315}\\\end{matrix}$}; \edge[dashed] node [auto=left] {};{$\cdot$} ]
         ]
        ]
       ]
      ]
     ]
    ]
    [ .\node[]{$\begin{matrix}\mathbf{321}^{(1)}\\\end{matrix}$};
     [ .\node[]{$\begin{matrix}\mathbf{1432}\\\end{matrix}$};
      [ .\node[]{$\begin{matrix}\mathbf{3214}^{(0)}\\\end{matrix}$};
       [ .\node[]{$\begin{matrix}\mathbf{14325}\\\end{matrix}$};
        [ .\node[]{$\begin{matrix}\mathbf{42153}\\\end{matrix}$};
         [ .\node[]{$\begin{matrix}\mathbf{153264}\\\end{matrix}$};
          [ .\node[]{$\begin{matrix}\mathbf{531642}\\\end{matrix}$}; \edge[dashed] node [auto=left] {};{$\cdot$} ]
         ]
        ]
       ]
      ]
      [ .\node[]{$\begin{matrix}\mathbf{4321}^{(1)}\\\end{matrix}$};
       [ .\node[]{$\begin{matrix}\mathbf{15432}\\\end{matrix}$};
        [ .\node[]{$\begin{matrix}\mathbf{43215}^{(0)}\\\end{matrix}$};
         [ .\node[]{$\begin{matrix}\mathbf{154326}\\\end{matrix}$};
          [ .\node[]{$\begin{matrix}\mathbf{532164}\\\end{matrix}$}; \edge[dashed] node [auto=left] {};{$\cdot$} ]
         ]
        ]
        [ .\node[]{$\begin{matrix}\mathbf{54321}^{(1)}\\\end{matrix}$};
         [ .\node[]{$\begin{matrix}\mathbf{165432}\\\end{matrix}$};
          [ .\node[]{$\begin{matrix}\mathbf{543216}^{(0)}\\\end{matrix}$}; \edge[dashed] node [auto=left] {};{$\cdot$} ]
          [ .\node[]{$\begin{matrix}\mathbf{654321}^{(1)}\\\end{matrix}$}; \edge[dashed] node [auto=left] {};{$\cdot$} ]
         ]
        ]
       ]
      ]
     ]
    ]
   ]
  ]
 ]
]
     \node (orig) at (-12,-12) {};
     \node (V10) at (V1 -| orig) {$\Rcr_1$};
     \draw [dotted, gray] (V1) -- (V10);
     \foreach \m in {2,3,4,5,6}
     {
     \node (Y\m0) at (Y\m -| orig) {$\Deq_\m \cap \Sym_\m^1$};
     \node (V\m0) at (V\m -| orig) {$\Rcr_\m$};
     \draw [dotted, gray] (Y\m) -- (Y\m0);
     \draw [dotted, gray] (V\m) -- (V\m0);
     }
     \foreach \n/\m in {1/2,2/3,3/4,4/5,5/6}
     {
     \draw [->] (V\m0) -- (Y\m0) ;
     \node at ($(V\m0)!0.5!(Y\m0)$) [left] {$\myPsi_\m$};
     \draw [->] (Y\m0) -- (V\n0) ;
     \node at ($(Y\m0)!0.5!(V\n0)$) [left] {$\myPhi_\m$};
     }
     \node (DDD) at (V60)  [below] {$\vdots$};
   \end{tikzpicture}
}}
  \caption{Recursive application of $\myPhi\circ\myPsi$ or its inverse}
\label{fig:recursiveone}  
\end{figure}
In this subsection, we assume that $m \geq 2$.
The composition
$\Rcr_{m} \overset{\myPsi_m}{\rightarrow}
\Deq_m \cap \Sym_m^1
\overset{\myPhi_m}{\rightarrow} \Rcr_{m-1}$ gives rise to
the projection $\Rcr_{m} \rightarrow \Rcr_{m-1}$ and
it can be applied recursively to reach to
$\Rcr_{1}=\Sym_{1}$ containing only one element $\mathrm{id}|_{[1]} \in \Sym_1$.
In other words, starting with $\Rcr_{1}$, inverting the arrows
by the iterative application of the lifting, Procedure \ref{proc:genproc},
we obtain a generation process of
$\Rcr_{1} \rightarrow \Rcr_{2} \rightarrow \cdots \rightarrow \Rcr_{m}$
for arbitrary degree $m$.

Figure \ref{fig:recursiveone} shows the tree corresponding to the process
for $m \leq 6$.
It has two interpretations; ascending the family tree (projection)
and descending the family tree (generation). Here we explain it
based on the latter interpretation.

Permutations are shown using the one-line notation and a bold font.
At the root of the tree,
the unique element $\mathbf{1} \in \Rcr_1$
exists. 
For $m=2,$ the line 3 of Procedure \ref{proc:genproc}
is applied to $\pi = \mathbf{1} \in \Rcr_1$, yielding
$\theta_\pi = \mathbf{12} \in \Deq_2 \cap \Sym_2^1$.
For this $\theta_\pi$, ``If'' statement in the line 4
is the case with $a = 1.$ Thus, the line 5 produces two elements
$\theta_{1,0} = \mathbf{12}$ and
$\theta_{1,1} = \mathbf{21} \in \Rcr_2$. In the picture,
they are denoted as  $\mathbf{12}^{(0)}$ and $\mathbf{21}^{(1)}$
respectively, where the index $(b)$ indicates the second parameter $b$ of
$\theta_{a,b}$. When we arrange these nodes in the tree, there are two ways,
however we employ the ordering $(0)$-first (left), $(1)$-second (right).
Since 
$\theta_\pi = \mathbf{12}$  is the only
element of $\Deq_2 \cap \Sym_2^1$, we reach to
the line 10 with $X=\{\mathbf{12}, \mathbf{21}\}$.
Thus, we have constructed $\Rcr_2 = \{\mathbf{12}, \mathbf{21}\}$.

The run of Procedure \ref{proc:genproc} for $m=3$ is similar
and we have $\Rcr_3 = \{\mathbf{123}, \mathbf{231}, \mathbf{213}, \mathbf{321} \}$ at the line 10.

For $m=4$, however, 
for some $\theta_\pi \in \Deq_4 \cap \Sym_4^1$,
namely for $\theta_\pi = \mathbf{1342}, \mathbf{1324}
$ in Fig. \ref{fig:recursiveone},
the line 6 is the case and for each of them, a single child
$\hat{\theta} \in \Rcr_4$
is produced($\mathbf{2413}$ for $\theta_\pi = \mathbf{1342}$ and
$\mathbf{3142}$
for $\theta_\pi = \mathbf{1324}$).

Iterating the lifting procedure, we finally have
$\sum_{k=1}^{6} \phi(k) = 1 + 1 + 2 + 2 + 4 + 2= 12$ elements
of $\Rcr_6$.

\begin{rem}\label{rem:genproc-farey}
  By using Procedure \ref{proc:genproc} and Corollary \ref{Cor1},
  we are able to construct $\Sinv_m$, the inverses of the
   {\mysos} permutations for arbitrary degree $m$,
         {\it without depending on\/}  Farey sequence.
  If we may depend on Farey sequence, such construction is straightforward:
  By {\mysur}'s bijection \cite[Sats I]{sur},
  the denominators of successive two fractions in \Th{$m$} Farey
  sequence correspond to the 1st and \Th{$m$} terms of a {\mysos}
  permutation respectively, while a {\mysos} permutation is 
  uniquely determined by its 1st and \Th{$m$} terms, by the recurrence
  of Theorem \ref{Known0}. Using this bijection, the entire $\Sos_m$,
  the set of {\mysos} permutations of degree $m$, is constructable
  from \Th{$m$} Farey sequence.
    We also note that 
    even the recurrence \eqref{knowneq1}
which defines the set ${\Rcr}_m$,
does not appear in Procedure \ref{proc:genproc}.
\end{rem}
\newcommand \fperm[2]{\mathsf{F}_{#1}[#2]}
\newcommand \permf[2]{\mathsf{F}_{#1}^{-1}[#2]}
\newcommand \downa[1]{\hspace{-0.5ex}\downarrow\hspace{-0.5ex}{}^{#1}}
\newsavebox{\Bmychi}
\savebox{\Bmychi}{$\mychi$}
\renewcommand{\mychi}{\usebox{\Bmychi}}
%
%
\subsection{Generation Tree vs. Farey Tree}\label{ss:orders}

Finally, we mention an isomorphism between
  the generation tree of Procedure \ref{proc:genproc} and
  a
  tree formed by the Farey intervals.
  
  For an integer $M \geq 1$, let $T_M$ be the binary tree
  of Procedure \ref{proc:genproc}
  where the nodes of $\Rcr_{m}$ for $m \leq M$ are present
  but all the nodes of $\Deq_{m} \cap \Sym_{m}^{1}$ are omitted.
  Formally, $T_M$ has $M$ levels $m \in  [M],$
  its root node (level 1) of $T_M$ is
  $\mathbf{1} \in \Rcr_{1} = \Sym_1$ and its level $m$ nodes
  are the elements of $\Rcr_{m}$, where, for $m \geq 2$, the nodes
  $\pi \in \Rcr_{m-1}$ and $\hat{\theta} \in \Rcr_{m}$
  are connected by an edge
  if and only if $\myPhi_{m}\circ \myPsi_{m}(\hat{\theta}) = \pi$.
  When $\pi \in \Rcr_{m-1}$ has two children
  $\{\theta_{a,0}, \theta_{a,1}\} =
   \myPsi_{m}^{-1}\circ\myPhi_{m}^{-1}(\{\pi\})$, $\pi$ is said to be
 \textit{branching\/}. In this case,
  the two descendants of $\pi$
 are drawn $\theta_{a,0}$-left, $\theta_{a,1}$-right, as in Fig.~\ref{fig:recursiveone}.
 As an example, Figure~\ref{fig:recursivetwo} shows $T_6$
 which corresponds to Fig.~\ref{fig:recursiveone}.
 \begin{figure}[t]
\centering
  \rotatebox{0}{\scalebox{0.65}{
   \begin{tikzpicture}
     \tikzset{level distance=40pt}
%
%
\Tree [ .\node(V1){$\begin{matrix}\mathbf{1}\\\end{matrix}$};
  [ .\node(V2){$\begin{matrix}\mathbf{12}^{(0)}\\\end{matrix}$};
    [ .\node(V3){$\begin{matrix}\mathbf{123}^{(0)}\\\end{matrix}$};
      [ .\node(V4){$\begin{matrix}\mathbf{1234}^{(0)}\\\end{matrix}$};
        [ .\node(V5){$\begin{matrix}\mathbf{12345}^{(0)}\\\end{matrix}$};
          [ .\node(V6){$\begin{matrix}\mathbf{123456}^{(0)}\\\end{matrix}$}; \edge[dashed] node [auto=left] {};{$\cdot$} ]
          [ .\node[]{$\begin{matrix}\mathbf{234561}^{(1)}\\\end{matrix}$}; \edge[dashed] node [auto=left] {};{$\cdot$} ]
        ]
        [ .\node[]{$\begin{matrix}\mathbf{23451}^{(1)}\\\end{matrix}$};
          [ .\node[]{$\begin{matrix}\mathbf{245613}\\\end{matrix}$}; \edge[dashed] node [auto=left] {};{$\cdot$} ]
        ]
      ]
      [ .\node[]{$\begin{matrix}\mathbf{2341}^{(1)}\\\end{matrix}$};
        [ .\node[]{$\begin{matrix}\mathbf{24513}\\\end{matrix}$};
          [ .\node[]{$\begin{matrix}\mathbf{246135}\\\end{matrix}$}; \edge[dashed] node [auto=left] {};{$\cdot$} ]
        ]
      ]
    ]
    [ .\node[]{$\begin{matrix}\mathbf{231}^{(1)}\\\end{matrix}$};
      [ .\node[]{$\begin{matrix}\mathbf{2413}\\\end{matrix}$};
        [ .\node[]{$\begin{matrix}\mathbf{24135}^{(0)}\\\end{matrix}$};
          [ .\node[]{$\begin{matrix}\mathbf{351462}\\\end{matrix}$}; \edge[dashed] node [auto=left] {};{$\cdot$} ]
        ]
        [ .\node[]{$\begin{matrix}\mathbf{35241}^{(1)}\\\end{matrix}$};
          [ .\node[]{$\begin{matrix}\mathbf{362514}\\\end{matrix}$}; \edge[dashed] node [auto=left] {};{$\cdot$} ]
        ]
      ]
    ]
  ]
  [ .\node[]{$\begin{matrix}\mathbf{21}^{(1)}\\\end{matrix}$};
    [ .\node[]{$\begin{matrix}\mathbf{213}^{(0)}\\\end{matrix}$};
      [ .\node[]{$\begin{matrix}\mathbf{3142}\\\end{matrix}$};
        [ .\node[]{$\begin{matrix}\mathbf{31425}^{(0)}\\\end{matrix}$};
          [ .\node[]{$\begin{matrix}\mathbf{415263}\\\end{matrix}$}; \edge[dashed] node [auto=left] {};{$\cdot$} ]
        ]
        [ .\node[]{$\begin{matrix}\mathbf{42531}^{(1)}\\\end{matrix}$};
          [ .\node[]{$\begin{matrix}\mathbf{426315}\\\end{matrix}$}; \edge[dashed] node [auto=left] {};{$\cdot$} ]
        ]
      ]
    ]
    [ .\node[]{$\begin{matrix}\mathbf{321}^{(1)}\\\end{matrix}$};
      [ .\node[]{$\begin{matrix}\mathbf{3214}^{(0)}\\\end{matrix}$};
        [ .\node[]{$\begin{matrix}\mathbf{42153}\\\end{matrix}$};
          [ .\node[]{$\begin{matrix}\mathbf{531642}\\\end{matrix}$}; \edge[dashed] node [auto=left] {};{$\cdot$} ]
        ]
      ]
      [ .\node[]{$\begin{matrix}\mathbf{4321}^{(1)}\\\end{matrix}$};
        [ .\node[]{$\begin{matrix}\mathbf{43215}^{(0)}\\\end{matrix}$};
          [ .\node[]{$\begin{matrix}\mathbf{532164}\\\end{matrix}$}; \edge[dashed] node [auto=left] {};{$\cdot$} ]
        ]
        [ .\node[]{$\begin{matrix}\mathbf{54321}^{(1)}\\\end{matrix}$};
          [ .\node[]{$\begin{matrix}\mathbf{543216}^{(0)}\\\end{matrix}$}; \edge[dashed] node [auto=left] {};{$\cdot$} ]
          [ .\node[]{$\begin{matrix}\mathbf{654321}^{(1)}\\\end{matrix}$}; \edge[dashed] node [auto=left] {};{$\cdot$} ]
        ]
      ]
    ]
  ]
]
     \node (orig) at (-12,-8) {};
     \node (V10) at (V1 -| orig) {$\Rcr_1$};
     \draw [dotted, gray] (V1) -- (V10);
     \foreach \m in {2,3,4,5,6}
     {
     \node (V\m0) at (V\m -| orig) {$\Rcr_\m$};
     \draw [dotted, gray] (V\m) -- (V\m0);
     }
     \foreach \n/\m in {1/2,2/3,3/4,4/5,5/6}
     {
     \draw [->] (V\m0) -- (V\n0) ;
     \node at ($(V\m0)!0.5!(V\n0)$) [left] {$\myPhi_\m \circ \myPsi_\m$};
     }
     \node (DDD) at (V60)  [below] {$\vdots$};
   \end{tikzpicture}
}}
  \caption{The tree $T_6$}
\label{fig:recursivetwo}  
\end{figure}
 
Though the description of Theorem \ref{Known2}
 stating the exact enumeration of {\mysos} permutations
 was sufficient
to derive the results in Section \ref{sect:mainresults},
in order to explain the tree formed by the Farey intervals,
we need a more explicit description of how \mysur{}'s bijection
mentioned at the beginning of Section \ref{sect:known}
implies Theorem \ref{Known2}. The refined statement
of Theorem \ref{Known2}
is as follows:
\begin{known}[{\cite[Satz I]{sur}}, see also \cite{rosia}]\label{known:sur}
  Let $m \geq 2$ and let $N := \sum_{l=1}^{m} \phi(l).$ Let
  $0 = f_0 < f_1 < \cdots
  < f_{N} = 1$ be the
\Th{$m$}~Farey sequence and
  let $\mathfrak{F}_m = \{ F_t = (f_{t-1}, f_{t}) : t \in \left[ N \right] \}$
  be the collection of the open intervals formed by 
  two successive terms of them.
  Let $\sigma_\alpha \in \Sos_m$ be the {\mysos} permutation
  satisfying Ineq.~\eqref{Sosnojyouken1}
  where $\alpha \in F$ for some $F \in \mathfrak{F}_m$ and
  let $\tau_\alpha^m = \sigma_\alpha^{-1}$ be its inverse.
  
  When $\alpha \in F$ and  $\alpha' \in F'$
  hold for $\alpha, \alpha' \in (0,1)$
  and
  $F, F' \in \mathfrak{F}_m$, the two permutations
  $\tau_\alpha^m$ and
  $\tau_{\alpha'}^m$ are equal if and only if $F = F'$.
  In other words, the map
  $\tau_\cdot^m:\cup_{F \in \mathfrak{F}_m}  F \ni \alpha \mapsto
  \tau_\alpha^m \in \Sinv_m$
  induces a bijection
  $\mathsf{F}_{m}: \Sinv_m \rightarrow \mathfrak{F}_{m}.$
  In particular, $|\Sinv_m| = N.$
\end{known}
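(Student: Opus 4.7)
The plan is to establish the bijection $\mathsf{F}_m$ in three movements: constancy of $\alpha \mapsto \tau_\alpha^m$ on each open Farey interval (well-definedness), identification of the first and last terms of $\sigma_\alpha$ with the bracketing denominators (which, combined with Theorem \ref{Known0}, yields injectivity), and finally surjectivity together with the cardinality count.

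First I would prove that $\sigma_\alpha$ is constant on each open $F = (f_{t-1}, f_t) \in \mathfrak{F}_m$. On $F$, every map $\alpha \mapsto \{i\alpha\}$ ($i \in [m]$) is continuous, since any discontinuity would require $\alpha = k/i$ for some $i \in [m]$, making $\alpha$ a Farey fraction among $\{f_0, \ldots, f_N\}$ and contradicting $\alpha \in F$. Likewise, an equality $\{i\alpha\} = \{j\alpha\}$ with $i \neq j$ in $[m]$ entails $(i-j)\alpha \in \mathbb{Z}$, forcing $\alpha$ to be a rational of denominator at most $m-1$, again a Farey fraction. Thus the relative order of the $m$ values $\{i\alpha\}$, $i \in [m]$, is locally constant on $F$, and so are $\sigma_\alpha$ and $\tau_\alpha^m$.

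Second, for $\alpha \in F = (p/q, p'/q')$ with the unimodular identity $p'q - pq' = 1$, I would identify $\sigma_\alpha(1) = q$ and $\sigma_\alpha(m) = q'$. From $p/q < \alpha < p'/q'$ one immediately gets $\{q\alpha\} = q\alpha - p \in (0, 1/q')$ and $\{q'\alpha\} = q'\alpha - (p' - 1) \in (1 - 1/q, 1)$, so these are natural candidates for the minimum and maximum. To verify extremality, every $i \in [m]$ admits a unique decomposition $i = xq + yq'$ with $(x,y) \in \mathbb{Z}^2$ (using $p'q - pq' = 1$); the Farey-interval condition $q + q' > m \geq \max(q, q')$ severely restricts the admissible $(x,y)$, and a short case split yields $\{i\alpha\} \geq 1/q'$ for $i \neq q$ and $\{i\alpha\} \leq 1 - 1/q$ for $i \neq q'$ ($i \in [m]$). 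Theorem \ref{Known0} then propagates $(\sigma_\alpha(1), \sigma_\alpha(m)) = (q, q')$ to determine $\sigma_\alpha$ entirely; since distinct $F \in \mathfrak{F}_m$ have distinct endpoint-denominator pairs $(q, q')$, they yield distinct $\sigma_\alpha$ and hence distinct $\tau_\alpha^m$.

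For surjectivity, any $\tau \in \Sinv_m$ equals $\sigma^{-1}$ for some $\sigma \in \Sos_m$ arising from an $\alpha$ satisfying the strict chain \eqref{Sosnojyouken1}; strictness forces $\{i\alpha\} > 0$ for every $i \in [m]$, so $\alpha$ is not a Farey fraction and must lie in some $F \in \mathfrak{F}_m$. The cardinality follows from the standard count $|\{f_0, \ldots, f_N\}| = 1 + \sum_{l=1}^{m} \phi(l)$, giving $|\mathfrak{F}_m| = N$ and hence $|\Sinv_m| = N$. The main obstacle is the case analysis in the second step showing that $\{q\alpha\}$ and $\{q'\alpha\}$ are truly extremal among the $m$ fractional parts; this is a classical Stern--Brocot argument that can alternatively be routed through the Three Distance Theorem or the theory of best rational approximations.
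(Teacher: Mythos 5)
First, a point of reference: the paper does not prove this statement at all --- it is quoted as Sur\'{a}nyi's Satz I (with \cite{rosia} as a secondary source), so there is no internal proof to compare yours against. Your overall architecture (local constancy of $\alpha\mapsto\tau_\alpha^m$ on each open Farey interval; identification of $(\sigma_\alpha(1),\sigma_\alpha(m))$ with the endpoint denominators $(q,q')$; propagation by Theorem~\ref{Known0}; surjectivity plus the count) is indeed the classical Sur\'{a}nyi route, and your first and third movements are essentially sound, modulo two small elisions: the reduction to $\alpha\in(0,1)$ in the surjectivity step, and the (true but unproven) fact that distinct intervals of $\mathfrak{F}_m$ have distinct denominator pairs $(q,q')$, which your injectivity argument genuinely needs.

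The real problem is that the central extremality step asserts inequalities that are false, so the ``short case split'' cannot produce them. Take $m=3$, $F=(1/3,1/2)$, so $q=3$, $q'=2$, and $\alpha=0.4$: then $\{1\cdot\alpha\}=0.4<1/q'=1/2$ although $1\neq q$, refuting your claim that $\{i\alpha\}\ge 1/q'$ for all $i\neq q$; symmetrically, for $\alpha=0.6\in(1/2,2/3)$ (so $q=2$, $q'=3$) one has $\{1\cdot\alpha\}=0.6>1-1/q=1/2$ although $1\neq q'$. What is true is only that $\{q\alpha\}$ is the strict minimum and $\{q'\alpha\}$ the strict maximum of the $m$ fractional parts; there is no uniform separating threshold $1/q'$ (resp.\ $1-1/q$). (Also, the representation $i=xq+yq'$ is not unique over $\mathbb{Z}^2$; it is unique only modulo adding $(q',-q)$.) A clean repair inside your own framework: by your step~1 it suffices to verify extremality at a single point of $F$, and the mediant $\alpha_0=(p+p')/(q+q')$ is the right one. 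Putting $Q:=q+q'\ge m+1$ and using $p'q-pq'=1$, one finds $\gcd(p+p',Q)=1$, $q(p+p')\equiv 1 \pmod{Q}$ and $q'(p+p')\equiv -1 \pmod{Q}$, while $i\mapsto \mathrm{Mod}_Q\bigl(i(p+p')\bigr)$ is injective on $[m]$; hence $\{q\alpha_0\}=1/Q$ and $\{q'\alpha_0\}=(Q-1)/Q$ are the unique minimum and maximum. With that substitution the remainder of your argument goes through.
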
  

By $\Rcr_m = \Sinv_m$ of Corollary \ref{Cor1}, this one-to-one correspondence
gives the bijection between $\Rcr_m$ and $\mathfrak{F}_m$:
\begin{equation}
\mathsf{F}_{m}: \Rcr_m \ni \pi \mapsto \fperm{m}{\theta} \in \mathfrak{F}_m.
\label{eq:surcorresp}
\end{equation}
Over the set $\mathfrak{F}_m$ indexed as in
    Theorem \ref{known:sur}, an obvious order 
\begin{equation}  
  F_t \preceq_{\mathfrak{F}_m} F_{t'}
  \overset{\mathrm{def}}{\Leftrightarrow} t \leq t'
\end{equation}
for Farey intervals ${F_t}=(f_{t-1}, f_{t})$ and ${F_{t'}}=(f_{t'-1}, f_{t'})$
is defined.

Now we are ready to explain the tree formed by the Farey intervals.
Let $T_{F,M}$ be the tree having $M$ levels, where
 the nodes of level $m \leq M$ are the Farey intervals in
 $\mathfrak{F}_{m}$. When $m \geq 2,$ nodes $F \in \mathfrak{F}_{m-1}$
 and $F' \in \mathfrak{F}_{m}$ are connected by an edge
 if and only if $F \supset F'$. The tree is also binary
 because $F \in \mathfrak{F}_{m-1}$
 contains at most two $F' \in \mathfrak{F}_{m}$.
 If $F\in  \mathfrak{F}_{m-1}$ contains two $F', F'' \in
 \mathfrak{F}_{m}$, then they are arranged in the $\preceq_{\mathfrak{F}_m}$
 order.

Also, let $T_{S,M}$ be the resulting tree
   obtained by replacing each node $F \in \mathfrak{F}_{m}$ for $m \in [M]$
   of 
   $T_{F,M}$ with 
  $\permf{m}{F} \in \Rcr_{m}$.

\begin{thm}\label{thm:eqvtrees}
  For each positive integer $M,$ the trees $T_M$ and $T_{S,M}$ are identical,
  including the horizontal order of nodes.
  In other words, $T_M$ and $T_{F,M}$
  are isomorphic as binary trees. In particular,
  in each level $m \in [M]$ of $T_M$,
  the horizontal order of the elements $\theta \in \Rcr_{m}$
  is the $\preceq_{\mathfrak{F}_{m}}$ order of
  the corresponding Farey intervals $\fperm{m}{\theta}$
  (of Eq.~\eqref{eq:surcorresp})
  in $\mathfrak{F}_{m}$.
\end{thm}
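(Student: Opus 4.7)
The plan is to argue by induction on $M$. The base case $M=1$ is trivial, as both trees consist of a single root. For the inductive step, I would assume $T_{M-1}$ and $T_{S,M-1}$ coincide as labeled ordered binary trees (under the identification via $\mathsf{F}_{M-1}$) and then, for each $\pi\in\Rcr_{M-1}$, check that its children in $T_M$ produced by Procedure~\ref{proc:genproc} match those in $T_{S,M}$ produced by refining $\mathsf{F}_{M-1}(\pi)$ inside $\mathfrak{F}_M$, both as sets (modulo $\mathsf{F}_M$) and in left-right order.

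The argument hinges on the classical dichotomy for the Farey sequence: a Farey interval $F=(p_1/q_1,p_2/q_2)\in\mathfrak{F}_{M-1}$ either persists in $\mathfrak{F}_M$ when $q_1+q_2>M$, or splits at the mediant $(p_1+p_2)/M$ into two sub-intervals with denominator pairs $(q_1,M)$ and $(M,q_2)$ when $q_1+q_2=M$. In the branching case I would take $a\in[M]$ with $aq_1\equiv 1\pmod M$. By {\mysur}'s bijection, the inverse {\mysos} permutation $\tau'$ for the left sub-interval satisfies $\tau'(q_1)=1$ and $\tau'(M)=M$; the congruential recurrence \eqref{knowneq1} with $\mychi(\tau'(M)\le\tau'(i))=0$ for $i\in[M-1]$ then forces $\tau'=\theta_{a,0}$. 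An analogous computation yields $\tau''=\theta_{a,1}$ for the right sub-interval. On the procedure side, $\pi(q_1)=1$ and $\pi(q_2)=M-1$ with $q_1+q_2=M$ give $\myPhi_M^{-1}(\pi)=\theta_{a,M-a+1}$, so $\CDS_M(\myPhi_M^{-1}(\pi))=\{a\}$; by Lemma~\ref{lem:proj}(iv),(vi) the procedure takes the branching branch at line~4 and emits exactly $\{\theta_{a,0},\theta_{a,1}\}$ in the order $\theta_{a,0}$-left, $\theta_{a,1}$-right, which matches $\preceq_{\mathfrak{F}_M}$ on the two sub-intervals.

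For the non-branching case, $F$ persists in $\mathfrak{F}_M$; let $\hat\theta=\mathsf{F}_M^{-1}(F)=\tau_\alpha^M$ for $\alpha\in F$. I would first check that $\hat\theta\notin\Rcr_M^{\mathrm{L},0}\sqcup\Rcr_M^{\mathrm{L},1}$: the alternative would force $\hat\theta(M)\in\{M,1\}$ and, via {\mysur}, would place $M$ among the endpoint denominators of $F$, contradicting $F\in\mathfrak{F}_{M-1}$. Writing $r_1:=\hat\theta(1)$, the non-branching hypothesis $q_1,q_2\ge 2$ gives $r_1\in\{2,\ldots,M-1\}$; indeed $r_1=1$ (resp.\ $r_1=M$) would give $q_1=1$ (resp.\ $q_2=1$), which in $\mathfrak{F}_{M-1}$ forces the partner denominator to equal $M-1$, placing us back in the branching case. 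Unwinding $\myPsi_M$ and $\myPhi_M$ with $\hat\theta^{-1}=\sigma_\alpha^M$ gives $(\myPhi_M\circ\myPsi_M(\hat\theta))^{-1}(1)=\sigma_\alpha^M(r_1+1)-1$ and $(\myPhi_M\circ\myPsi_M(\hat\theta))^{-1}(M-1)=\sigma_\alpha^M(r_1-1)-1$. Applying {\mysos}'s recurrence \eqref{sosrec} at the index $r_1$, with $\sigma_\alpha^M(r_1)=1$ and a routine case analysis eliminating the first two branches by sign and range constraints, yields $\sigma_\alpha^M(r_1+1)=q_1+1$ and $\sigma_\alpha^M(r_1-1)=q_2+1$. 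Hence $\myPhi_M\circ\myPsi_M(\hat\theta)$ has inverse data $(q_1,q_2)$, matching those of $\tau_\alpha^{M-1}=\pi$; the injectivity of the map $\theta\mapsto(\theta^{-1}(1),\theta^{-1}(M-1))$ on $\Rcr_{M-1}$ (a consequence of {\mysos}'s recurrence together with {\mysur}'s bijection) then gives $\myPhi_M\circ\myPsi_M(\hat\theta)=\pi$, so line~7 of Procedure~\ref{proc:genproc} outputs exactly $\hat\theta$.

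The main obstacle, as I see it, is the non-branching bookkeeping: carrying the composition $\myPhi_M\circ\myPsi_M$ through the positional data of $\hat\theta$ and invoking {\mysos}'s recurrence correctly requires a careful treatment of the edge cases $r_1\in\{1,M\}$, which must be funneled into the branching case via the Farey observation above. Once those routine verifications are dispatched, the level-by-level agreement of $T_M$ and $T_{S,M}$ both as sets and in horizontal order follows, closing the induction and establishing the claimed isomorphism between $T_M$ and $T_{F,M}$ as ordered binary trees.
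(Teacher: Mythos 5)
Your argument is correct in outline but takes a genuinely different route from the paper's. The paper's engine is the compatibility identity $\myPhi_m\circ\myPsi_m(\tau_\alpha^{m})=\tau_\alpha^{m-1}$ (Proposition \ref{prop:compat}), proved by a direct computation with the closed-form expression \eqref{eq:taualphaterms} for $\tau_\alpha(i)$ quoted from \cite{tikan1}, together with an $\epsilon$-perturbation analysis around $\alpha=a/m$ (Proposition \ref{prop:eps}); the interval bookkeeping of Proposition \ref{prop:division} and then the tree isomorphism follow. You avoid the closed form entirely and instead combine the classical mediant dichotomy for Farey intervals ($q_1+q_2=M$ versus $q_1+q_2>M$), \mysur's identification of $(\sigma(1),\sigma(m))$ with the endpoint denominators, one step of \mysos's recurrence \eqref{sosrec} (Theorem \ref{Known0}) at the position of $1$ in $\sigma_\alpha^{M}$, and the injectivity of $\theta\mapsto(\theta^{-1}(1),\theta^{-1}(M-1))$ on $\Rcr_{M-1}=\Sinv_{M-1}$. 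What your route buys is independence from the floor-function computation and the perturbation argument; what it costs is reliance on Theorem \ref{Known0} and on a finer reading of \mysur's bijection than the paper itself invokes (the paper only uses Theorem \ref{known:sur} and cardinalities). Your exclusion of the edge cases $r_1\in\{1,M\}$ via $q_1,q_2\ge 2$ in the non-branching case is correct.

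One step must be tightened. In the branching case you assert that $\pi(q_1)=1$, $\pi(q_2)=M-1$ and $q_1+q_2=M$ ``give'' $\myPhi_M^{-1}(\pi)=\theta_{a,M-a+1}$. Two prescribed values do not determine a permutation, and the defining recurrence of $\Rcr_{M-1}$ is a mod-$(M-1)$ statement from which this mod-$M$ identity does not directly follow; as written, this is the one place where the procedure-side and Farey-side dichotomies are matched, so it cannot be left as an assertion. The repair is exactly the device you already deploy in the non-branching case: compute $\myPsi_M(\theta_{a,0})=\theta_{a,M-a+1}$ directly from the definitions, note that $\myPhi_M\circ\myPsi_M(\theta_{a,0})\in\Rcr_{M-1}$ sends $q_1\mapsto \Mod_M(aq_1)=1$ and $q_2\mapsto\Mod_M(aq_2)=M-1$, and conclude $\myPhi_M\circ\myPsi_M(\theta_{a,0})=\pi$ by the endpoint-injectivity on $\Rcr_{M-1}$, whence $\myPhi_M^{-1}(\pi)=\myPsi_M(\theta_{a,0})=\theta_{a,M-a+1}$ and $\CDS_M(\myPhi_M^{-1}(\pi))=\{a\}$ as you need. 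With that repair the level-by-level matching of children, both as sets and in left--right order, goes through and the induction closes.
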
    

The proof depends on the following propositions, whose
proofs are given in Appendix. In the sequel, the permutation
$\tau_\alpha^m \in \Sinv_m = \Rcr_m$ is as in Theorem \ref{known:sur}.
\begin{prp}\label{prop:compat}
  Let $m \geq 2$
and let $\alpha$ be a real number in $(0,1)$ which is not in the
\Th{$m$}~Farey sequence.
Then $\myPhi_m\circ\myPsi_m(\tau_\alpha^{m})
   = \tau_\alpha^{m-1}$.
\end{prp}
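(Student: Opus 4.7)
The plan is to reduce Proposition~\ref{prop:compat} to the pointwise identity
\[
\myPsi_m(\tau_\alpha^m)(j) \;=\; 1 + \tau_\alpha^{m-1}(j-1) \qquad \text{for } j \in \{2,\ldots,m\},
\]
after which the definition of $\myPhi_m$ yields $(\myPhi_m\circ\myPsi_m(\tau_\alpha^m))(i) = \myPsi_m(\tau_\alpha^m)(i+1)-1 = \tau_\alpha^{m-1}(i)$ for $i \in [m-1]$. Because $\alpha \in (0,1)$ avoids the \Th{$m$}\,Farey sequence, a brief $\gcd$ argument shows that the fractional parts $\{k\alpha\}$ are pairwise distinct for $k\in[m]$ (and a fortiori for $k\in[m-1]$), so that $\tau_\alpha^m$ and $\tau_\alpha^{m-1}$ are well-defined permutations.

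To prove the identity I would unfold $\myPsi_m(\tau_\alpha^m)(j) = \supermod{m}(\tau_\alpha^m(j)-\tau_\alpha^m(1)+1)$ via a case split on whether $\{j\alpha\}>\{\alpha\}$ or $\{j\alpha\}<\{\alpha\}$; equality is excluded by the Farey hypothesis and $j\neq 1$. In the first case the argument of $\supermod{m}$ lies in $\{2,\ldots,m\}$, so $\supermod{m}$ acts as the identity, and $\myPsi_m(\tau_\alpha^m)(j)-1$ equals the number of $k\in[m]$ with $\{\alpha\}<\{k\alpha\}\le\{j\alpha\}$. In the wrap-around case the argument lies in $\{-(m-2),\ldots,0\}$ and $\supermod{m}$ contributes an additive $m$, so $\myPsi_m(\tau_\alpha^m)(j)-1$ equals $m$ minus the number of $k\in[m]$ lying in the complementary arc $(\{j\alpha\},\{\alpha\}]$, i.e.\ the number of $k\in[m]$ with $\{k\alpha\}\notin(\{j\alpha\},\{\alpha\}]$.

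In both cases, the shift $k\mapsto k-1$ from $\{2,\ldots,m\}$ to $[m-1]$, combined with
\[
\{(k-1)\alpha\} = \{k\alpha\}-\{\alpha\}+\mychi(\{k\alpha\}<\{\alpha\}),
\]
is intended to bijectively match the counted subset with $\{k'\in[m-1]:\{k'\alpha\}\le\{(j-1)\alpha\}\}$, whose cardinality is by definition $\tau_\alpha^{m-1}(j-1)$. Geometrically, rotating by $-\alpha$ sends the ``forward circular arc'' from $\{\alpha\}$ to $\{j\alpha\}$ on $[0,1)/\mathbb{Z}$ onto the linear interval $(0,\{(j-1)\alpha\}]$, and one uses the same relation $\{(j-1)\alpha\}=\{j\alpha\}-\{\alpha\}+\mychi(\{j\alpha\}<\{\alpha\})$ to compute the length of that arc.

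The main obstacle I anticipate is the bookkeeping for this bijection in the wrap-around case $\{j\alpha\}<\{\alpha\}$, in particular its surjectivity: for each $k'\in[m-1]$ with $\{k'\alpha\}\le\{(j-1)\alpha\}$ one must verify that $k := k'+1$ lies in $\{2,\ldots,m\}$ with $\{k\alpha\}\notin(\{j\alpha\},\{\alpha\}]$, which requires a further sub-case split on whether $\{k'\alpha\}+\{\alpha\}$ is strictly less than $1$ (giving $\{k\alpha\}>\{\alpha\}$) or at least $1$ (giving $\{k\alpha\}\le\{j\alpha\}$). Once this careful check is carried out, injectivity of $k\mapsto k-1$ is automatic, the two cardinalities agree, and the identity follows, completing the proof.
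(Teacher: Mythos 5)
Your argument is correct, and it takes a genuinely different route from the paper's. The paper proves the proposition by importing the closed-form expression $\tau_\alpha(i)=m(1-\lfloor i\alpha\rfloor)+\sum_{j=1}^m\lfloor j\alpha\rfloor+\sum_{j=1}^m\lfloor(i-j)\alpha\rfloor$ from \cite{tikan1}, computing $\myPhi_m\circ\myPsi_m(\tau_\alpha^m)(i)$ and $\tau_\alpha^{m-1}(i)$ modulo $m$, observing that both reduce to $\sum_{j=1}^{m-1}\lfloor j\alpha\rfloor+\lfloor i\alpha\rfloor+\sum_{j=1}^{m-1}\lfloor(i-j)\alpha\rfloor-1 \bmod m$, and then upgrading the congruence to an equality because both sides lie in $[m-1]$. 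You instead work directly from the counting definition $\tau_\alpha(i)=\sum_{j=1}^m\mychi(\{j\alpha\}\le\{i\alpha\})$ (Eq.~\eqref{eq:taualphadef}) and exhibit an explicit bijection $k\mapsto k-1$ between the set counted by $\myPsi_m(\tau_\alpha^m)(j)-1$ and the set counted by $\tau_\alpha^{m-1}(j-1)$, which amounts to rotating the circle by $-\alpha$. I checked both cases: when $\{j\alpha\}>\{\alpha\}$ the argument of $\supermodtwo{m}{\cdot}$ lies in $\{2,\dots,m\}$ and the matching is immediate via $\{(k-1)\alpha\}=\{k\alpha\}-\{\alpha\}$; when $\{j\alpha\}<\{\alpha\}$ the wrap-around contributes $+m$, and the sub-case split you anticipate for surjectivity (on whether $\{k'\alpha\}+\{\alpha\}<1$ or $\geq 1$) is exactly what is needed and it goes through, the degenerate value $\{k'\alpha\}+\{\alpha\}=1$ being excluded by the Farey hypothesis. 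Your route is more elementary and self-contained --- it needs neither the quoted floor-sum formula nor the mod-$m$ detour, and it makes the geometric content (conjugating the rotation by itself deletes the point $\{m\alpha\}$ and relabels the rest) visible --- at the price of a longer case analysis; the paper's route is a short mechanical computation once the formula from \cite{tikan1} is granted.
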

\begin{prp}[{A similar assertion was obtained in \cite[Theorem 5]{tikan1}}] \label{prop:eps}
  Suppose that $a \in [m], \gcd(a, m)=1$ and let $\alpha :=
   a/m$. Let $\theta_{a,b} \in \Rcr_m^{\mathrm{L}, b}, b = 0, 1$ be as in
   Proposition \ref{PropT}.
   Then, the following holds:
\begin{itemize}
\item[(i)] $\tau_\alpha = \theta_{a,1} \in \Rcr_m^{\mathrm{L},1}.$
  \textrm{(}Note: $\sigma_\alpha = \tau_\alpha^{-1}$ for $\alpha = a/m$
  slightly deviates from Ineq.~\eqref{Sosnojyouken1} in that
  $\{\sigma_\alpha(1)\alpha\} = \{ m a / m \} = 0.$ However, we can
  replace the leftmost $<$ in Ineq.~\eqref{Sosnojyouken1}
  with $\leq$
  without changing
  the set $\Sos_m$ hence $\Sinv_m.$ \textrm{)}
\item[(ii)] For any real number $\epsilon \in (0, 1/m^2)$,
  $\tau_{\alpha+\epsilon} = \theta_{a,1}$ whereas
  $\tau_{\alpha-\epsilon} = \theta_{a,0} \in \Rcr_m^{\mathrm{L},0}$.
\end{itemize}
\end{prp}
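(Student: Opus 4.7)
My plan is to compute the fractional parts $\{i\alpha\}$ for $i \in [m]$ explicitly in each of the three cases $\alpha = a/m$, $\alpha = a/m + \epsilon$, and $\alpha = a/m - \epsilon$, read off the ranks to obtain $\tau_\alpha$, and match the resulting formulas with $\theta_{a,0}$ or $\theta_{a,1}$. The only quantitative input needed is the elementary bound $0 < i\epsilon < 1/m$ for every $i \in [m]$, which follows from $\epsilon \in (0, 1/m^2)$.

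For (i), I would use $\{i(a/m)\} = \Mod_m(ia)/m$ together with $\gcd(a,m)=1$ to deduce that the values $\Mod_m(ia)/m$ for $i \in [m]$ are a permutation of $\{0, 1/m, \ldots, (m-1)/m\}$. Hence $\tau_\alpha(i)$, the rank of $\{i\alpha\}$, equals $\Mod_m(ia)+1 = \supermodtwo{m}{ia+1} = \theta_{a,1}(i)$, where the first equality uses $\supermod{m}(j) = \Mod_m(j-1)+1$. The case $i=m$, for which the fractional part is $0$, is accommodated by the relaxation of the leftmost $<$ in Ineq.~\eqref{Sosnojyouken1} noted in the statement.

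For (ii) with $\alpha+\epsilon$: the bound $i\epsilon < 1/m$ gives $\{i(\alpha+\epsilon)\} = \Mod_m(ia)/m + i\epsilon$ with no wrap-around (the sum stays below $1$). A brief gap-argument---a perturbation of size less than $1/m$ cannot reverse a difference of at least $1/m$---shows that the ranking is identical to the $\epsilon=0$ case, so $\tau_{\alpha+\epsilon} = \theta_{a,1}$. For $\alpha-\epsilon$, wrap-around occurs precisely at $i=m$: $\{m(\alpha-\epsilon)\} = 1 - m\epsilon$, which exceeds $(m-1)/m$ and is therefore the largest fractional part. For $i<m$, $\{i(\alpha-\epsilon)\} = \Mod_m(ia)/m - i\epsilon$ lies in the interval $((\Mod_m(ia)-1)/m, \Mod_m(ia)/m)$, and the same gap-argument preserves their order by $\Mod_m(ia)$. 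The resulting rank is $\Mod_m(ia)$ for $i<m$ and $m$ for $i=m$, which equals $\theta_{a,0}(i) = \Mod_m(ai-1)+1$.

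The proof is essentially bookkeeping; the mildly subtle step is reconciling the $\Mod_m$-based rank computations with the $\supermod{m}$-based definition of $\theta_{a,b}$, and handling the wrap-around at $i=m$ separately in the $\alpha-\epsilon$ case. I do not anticipate any genuine obstacle beyond this.
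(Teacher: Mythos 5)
Your proof is correct, and part (i) coincides with the paper's own argument (rank formula $\tau_\alpha(i)=\sum_{j=1}^{m}\mychi(\{j\alpha\}\le\{i\alpha\})$, distinctness of the residues $\Mod_m(ia)$, hence $\tau_\alpha(i)=\Mod_m(ia)+1=\supermodtwo{m}{ai+1}$). For part (ii), however, you take a genuinely different route. The paper does not touch the fractional parts at all: it invokes the closed-form expression $\tau_\alpha(i)=m(1-\lfloor i\alpha\rfloor)+\sum_j\lfloor j\alpha\rfloor+\sum_j\lfloor(i-j)\alpha\rfloor$ quoted from \cite{tikan1}, splits it into four pieces, and checks term by term which floors are invariant under the perturbation $\alpha\mapsto a/m\pm\epsilon$ (using that no multiple of $m$ lies in $(ax-1,ax+1)$ for the relevant $x$); the only changes are $+m$ in the first piece at $i=m$ and $-1$ in the third piece, both occurring only for the $-\epsilon$ sign, which yields $\tau_{\alpha-\epsilon}(i)=\supermodtwo{m}{ai}$ directly. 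You instead perturb the fractional parts themselves: since the unperturbed values sit on the lattice $\{0,1/m,\ldots,(m-1)/m\}$ and every perturbation $i\epsilon$ lies in $(0,1/m)$, each $\{i(\alpha\pm\epsilon)\}$ stays in its own open interval of length $1/m$, so the ranking is forced, with the single wrap-around at $i=m$ in the $-\epsilon$ case producing the value $1-m\epsilon$ of rank $m$. Your spacing argument is sound (the gaps are $\ge 1/m$ while the differences of perturbations lie in $(-1/m,1/m)$), and the final bookkeeping $\tau_{\alpha-\epsilon}(i)=\Mod_m(ia)$ for $i<m$, $=m$ for $i=m$, does match $\theta_{a,0}=\supermodtwo{m}{a\,\cdot}$. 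What your approach buys is self-containedness: it needs only the defining rank formula and not the quoted identity \eqref{eq:taualphaterms} from \cite{tikan1}; what the paper's approach buys is that it is purely mechanical once that identity is granted, with no ordering argument to verify.
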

\begin{prp}\label{prop:division}
  Let $m \geq 2$ 
  and let $\pi$ be an element of  $\Rcr_{m-1}$.
  Let $\mathsf{F}_{m-1}:\Rcr_{m-1} \rightarrow \mathfrak{F}_{m-1}$
  be the bijection as in Eq.~\eqref{eq:surcorresp} and let
  $\theta_{a,b} \in \Rcr_m^{\mathrm{L}, b}, b = 0, 1$ be as in
   Proposition \ref{PropT}.  
  
  If $\pi$ is non-branching, then it holds 
  for the unique
    $\hat{\theta} \in \myPsi_{m}^{-1}\circ\myPhi_{m}^{-1}(\{ \pi
\})$ that 
  $\fperm{m}{\hat{\theta}} = \fperm{m-1}{\pi}.$ 

  If $\pi$ is branching, then there exists
  $a \in [m], \gcd(a,m) = 1$
  (thus $a/m$ is a new fraction in the
\Th{$m$}%
Farey sequence)
  such that $\myPsi_m^{-1} \circ \myPhi_m^{-1}(\{\pi\}) =
  \{\theta_{a,0}, \theta_{a,1} \}$
  and that
  $\fperm{m}{\theta_{a,0}} \sqcup \{a/m\}
    \sqcup
    \fperm{m}{\theta_{a,1}}
  = \fperm{m-1}{\pi}$.
\end{prp}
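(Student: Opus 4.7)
The plan is to reduce everything to a classical property of the Farey sequence: passing from the $\Th{(m-1)}$ to the $\Th{m}$ Farey sequence one inserts only fractions $a/m$ with $\gcd(a,m)=1$, and by the standard mediant argument any single Farey interval of order $m-1$ contains at most one such new fraction. Consequently the Farey interval $I := \fperm{m-1}{\pi}$ either persists as an element of $\mathfrak{F}_m$, or splits as $I = (p, a/m) \sqcup \{a/m\} \sqcup (a/m, q)$ with both open pieces lying in $\mathfrak{F}_m$. These two alternatives will match the non-branching and branching cases.

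First I would use Proposition \ref{prop:compat} together with Theorem \ref{known:sur} to set up a bijection between the children of $\pi$ in the generation tree and the $\Th{m}$ Farey intervals contained in $I$. Concretely, for any $\alpha \in I$ avoiding the $\Th{m}$ Farey sequence we have $\tau_\alpha^{m-1} = \pi$ from the definition of $\mathsf{F}_{m-1}$, hence $\myPhi_m\circ\myPsi_m(\tau_\alpha^m) = \pi$ by Proposition \ref{prop:compat}, so $\tau_\alpha^m$ is a child of $\pi$. Conversely any child $\theta$ of $\pi$ equals $\tau_\alpha^m$ for every $\alpha$ in its Farey interval $\fperm{m}{\theta}$, and the same equality chain forces $\fperm{m}{\theta} \subset I$. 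Since $\tau_\cdot^m$ is constant on each interval of $\mathfrak{F}_m$, the map $\theta \mapsto \fperm{m}{\theta}$ gives the desired bijection.

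In the non-branching alternative $I$ is already in $\mathfrak{F}_m$, so the unique child $\hat\theta$ satisfies $\fperm{m}{\hat\theta} = I = \fperm{m-1}{\pi}$, yielding the first half of the proposition. In the branching alternative, the two children correspond to the two halves $I_- := (p, a/m)$ and $I_+ := (a/m, q)$. To identify which child sits in which half I would invoke Proposition \ref{prop:eps}(ii): for $\epsilon \in (0, 1/m^2)$ one has $\tau_{a/m - \epsilon}^m = \theta_{a,0}$ and $\tau_{a/m + \epsilon}^m = \theta_{a,1}$, so $\fperm{m}{\theta_{a,0}} = I_-$ and $\fperm{m}{\theta_{a,1}} = I_+$. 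The disjoint-union identity $I_- \sqcup \{a/m\} \sqcup I_+ = I$ then delivers the second half of the proposition, and simultaneously confirms that the two children produced by Procedure \ref{proc:genproc} are indeed $\theta_{a,0}, \theta_{a,1}$ with the same $a$ as the newly inserted Farey fraction.

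The main conceptual hurdle is the coherence between the notion of ``branching'' coming from Procedure \ref{proc:genproc} (which produces either one or two elements of $\Rcr_m$) and the ``branching'' phenomenon on the Farey side (presence or absence of a new $\Th{m}$ fraction inside $I$). This coherence is exactly what the bijection of the second paragraph provides, since the procedure produces $1$ or $2$ children according to lines 4--5 versus 6--7, matching the $1$ or $2$ order-$m$ sub-intervals of $I$. Everything else reduces to the classical Farey mediant fact together with Propositions \ref{prop:compat} and \ref{prop:eps}, which are already at our disposal.
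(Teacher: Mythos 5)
Your proposal is correct, and it rests on the same two pillars as the paper's own proof: Proposition \ref{prop:compat} is used to show that every child $\theta \in \myPsi_m^{-1}\circ\myPhi_m^{-1}(\{\pi\})$ satisfies $\fperm{m}{\theta} \subset \fperm{m-1}{\pi}$, and Proposition \ref{prop:eps}(ii) is used to identify $\fperm{m}{\theta_{a,0}}$ and $\fperm{m}{\theta_{a,1}}$ as the order-$m$ intervals immediately to the left and right of $a/m$. The one point where you genuinely diverge is the reverse inclusion, i.e.\ that the children's intervals (plus $a/m$ in the branching case) exhaust $\fperm{m-1}{\pi}$: you get it by exhibiting a bijection between the children of $\pi$ and the order-$m$ Farey intervals contained in $\fperm{m-1}{\pi}$, and then invoking the classical mediant fact that an order-$(m-1)$ Farey interval contains at most one new fraction $c/m$; the paper instead supposes the leftover set is nonempty, picks an irrational $\beta$ in it, and derives a contradiction from $\tau_\beta^{m-1}=\pi$ together with $\tau_\beta^{m}$ not being a child (Proposition \ref{prop:compat} again). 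Your counting argument buys a cleaner conceptual statement --- the number of children equals the number of order-$m$ subintervals, which immediately reconciles the procedural and Farey-theoretic notions of ``branching'' --- at the cost of importing the mediant lemma as an external ingredient; the paper's contradiction argument is self-contained given Theorem \ref{known:sur}. If you write yours up, do spell out why $a/m$ must lie in the \emph{interior} of $\fperm{m-1}{\pi}$ (it cannot be an endpoint, since both adjacent order-$m$ intervals are contained in $\fperm{m-1}{\pi}$), which is what lets you conclude that $a/m$ is the unique new fraction there.
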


\subsubsection{Proof of Theorem \ref{thm:eqvtrees}}\label{ss:proofeqvtrees}

\begin{proof}
  When $M=1$, both $T_M$ and $T_{S,M}$ are the graph containing
  only one node $\mathbf{1} \in \Sym_1$ and no edge, thus they are equal.
  Suppose that $M\geq 2$ and that the assertion is valid for $M-1$. 
  Then, the nodes in level $M-1$ of the tree $T_{M-1}$
  are the leaves of $T_{M-1}$. Among them, 
look at the
\Th{$t$}%
node $\pi_t \in \Rcr_{M-1}$
  from the left.
By the induction hypothesis,
  $\pi_t$ is also the \Th{$t$}
node from the left among level $M-1$ nodes in $T_{S,M-1}$.

Now we compare the children of $\pi_t$ in $T_{M}$ and the children
of $\pi_t$ in $T_{S, M}$.
Note that $T_{M-1}$ (resp. $T_{S,M-1}$) is
the resulting subtree when level $M$ nodes
are removed from $T_{M}$ (resp. $T_{S,M}$) by definition.
Thus, $\pi_t$ is also
the \Th{$t$} node from the left among level $M-1$ nodes
both in $T_{M}$ and $T_{S,M}$.

  When $\pi_t$ is non-branching, $\pi_t$ has the unique child
  $\hat{\theta} \in \Rcr_{M}$ in the tree $T_{M}$.
  In the tree $T_{S,M}$, by Proposition \ref{prop:division},
  $\fperm{M-1}{\pi_t} = \fperm{M}{\hat{\theta}}$
  holds. Thus, $\fperm{M}{\hat{\theta}}$
  is the child of 
    $\fperm{M-1}{\pi_t}$ in $T_{F,M}$, implying that
  $\hat{\theta}$ is also the child of $\pi_t$ in $T_{S,M}$.
  When $\pi_t$ is branching, $\pi_t$ has the children
  $\theta_{a,0}$ (left) and $\theta_{a,1}$ (right) $\in \Rcr_{M}$
  (which are defined in Proposition \ref{PropT})
  in the tree $T_{M}$.
  By Proposition \ref{prop:division},
  $\fperm{M-1}{\pi_t} \supset
  \fperm{M}{\theta_{a,0}} \sqcup \fperm{M}{\theta_{a,1}}$
  holds (and there is no other $F' \in \mathfrak{F}_{M}$ such that
  $\fperm{M-1}{\pi_t} \supset F'$),
  thus 
  $\fperm{M}{\theta_{a,0}}$ and $\fperm{M}{\theta_{a,1}}$
  are the two children of $\fperm{M-1}{\pi_t}$ in  $T_{F,M}$.
  Therefore, $\theta_{a,0}$ and $\theta_{a,1}$ are
  the two children in the tree $T_{S,M}$,
  where $\theta_{a,0}$ is placed on the left side of
  $\theta_{a,1}$, 
  because the upper limit of $\fperm{M}{\theta_{a,0}}$
  and the lower limit of $\fperm{M}{\theta_{a,1}}$
  are both $a/m$ by Proposition \ref{prop:eps},
  showing
  that $\fperm{M}{\theta_{a,0}} \preceq_{\mathfrak{F}_M}
  \fperm{M}{\theta_{a,1}}$.

   Therefore, regardless of the number of the children of $\pi_t$
   which takes the same horizontal position both in $T_{M}$ and $T_{S,M}$,
   its descendants in $T_{M}$ and $T_{S,M}$ are the same
   including the horizontal order. Applying the argument to
   each of $\pi_t \in \Rcr_{M-1}$, 
   we have $T_{M} = T_{S,M}$.
\end{proof}
\section*{Acknowledgments}
The authors thank Shigeki Akiyama for his helpful suggestions
which let them consider the isomorphism problem of Section \ref{ss:orders}.

%
\appendix
\section{Proofs for the propositions used to prove Theorem \ref{thm:eqvtrees}}
In this appendix, we provide the proofs for
Propositions \ref{prop:compat}, \ref{prop:eps} and \ref{prop:division}
which were used to prove Theorem \ref{thm:eqvtrees} in
Section \ref{ss:proofeqvtrees}.

First we quote an explicit presentation
for a term of the inverse $\tau_\alpha^m  = \sigma_\alpha^{-1}$
of a \mysos{} permutation.
When there is no fear of confusion, $\tau_\alpha^m$ is denoted by
$\tau_\alpha.$
A direct consequence of Ineq.~\eqref{Sosnojyouken1} is
\begin{equation}
\tau_\alpha(i) = |\{j \in [m]: \{j \alpha\} \leq \{i \alpha\} \}| 
= \sum_{j = 1}^{m} \mychi(\{j \alpha\} \leq \{i \alpha\}),
\label{eq:taualphadef}
\end{equation}
however, in fact it has the following alternative presentation:
\begin{known}[{\cite[Propositions 3 and A1]{tikan1}}]
{%
Let $\alpha$ be a real number in $(0,1)$ which is not in the
\Th{$m$} Farey sequence.}
Let $\sigma_\alpha \in \Sos_m$ be the {\mysos} permutation
satisfying Ineq.~\eqref{Sosnojyouken1}
and let $\tau_\alpha := \sigma_\alpha^{-1}
  \in \Sinv_m.$ Then, it holds that
  \begin{equation}
    \tau_\alpha(i) = m (1 - \lfloor i \alpha \rfloor)
    + \sum_{j=1}^{m} \lfloor j \alpha \rfloor
    + \sum_{j=1}^{m} \lfloor (i-j) \alpha \rfloor
    \quad \text{for\ } i \in [m].
\label{eq:taualphaterms}
  \end{equation}
  In particular, it holds that
  \begin{equation}
    \tau_\alpha(1) = 1 + \lfloor m \alpha \rfloor
    \text{\ and\ } \tau_\alpha(m) = 2 m + 1 - (m + 1) \tau_\alpha(1)
    + 2 \sum_{j=1}^{m}
     \lfloor j \alpha \rfloor.
\label{eq:taualphaFirstLast}
  \end{equation}    
\end{known}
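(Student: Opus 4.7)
The plan is to derive Eq.~\eqref{eq:taualphaterms} directly from the counting presentation \eqref{eq:taualphadef}, by rewriting each summand $\mychi(\{j\alpha\} \leq \{i\alpha\})$ as a linear combination of floor values, and then to obtain \eqref{eq:taualphaFirstLast} by specializing $i = 1$ and $i = m$ in the main formula.

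The first step is to exploit the hypothesis that $\alpha$ avoids the $m$th Farey sequence: this forces $k\alpha \notin \mathbb{Z}$ for every $k \in [m]$, hence $\{-k\alpha\} = 1 - \{k\alpha\}$ and $\lfloor -k\alpha \rfloor = -\lfloor k\alpha \rfloor - 1$. Combining these with the standard carry identity $\lfloor x + y \rfloor = \lfloor x \rfloor + \lfloor y \rfloor + \mychi(\{x\} + \{y\} \geq 1)$ applied to $x = i\alpha$, $y = -j\alpha$, I would obtain, for $i \neq j$,
\[
  \mychi(\{j\alpha\} \leq \{i\alpha\}) = \lfloor (i-j)\alpha \rfloor - \lfloor i\alpha \rfloor + \lfloor j\alpha \rfloor + 1,
\]
the point being that the condition $\{i\alpha\} + \{-j\alpha\} \geq 1$ is, by the non-integrality, equivalent to $\{j\alpha\} \leq \{i\alpha\}$. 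For $i = j$ both sides of the displayed identity evaluate to $1$ directly, so the same expression remains valid for every $j \in [m]$. Summing over $j \in [m]$ collapses to
\[
  \tau_\alpha(i) = \sum_{j=1}^m \lfloor (i-j)\alpha \rfloor - m \lfloor i\alpha \rfloor + \sum_{j=1}^m \lfloor j\alpha \rfloor + m,
\]
which is Eq.~\eqref{eq:taualphaterms}.

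Having established \eqref{eq:taualphaterms}, the two formulas of \eqref{eq:taualphaFirstLast} then fall out by specialization. For $i = 1$, since $\lfloor \alpha \rfloor = 0$, the only nontrivial piece is $\sum_{j=1}^m \lfloor (1-j)\alpha \rfloor = -(m-1) - \sum_{k=1}^{m-1} \lfloor k \alpha \rfloor$, again via $\lfloor -k\alpha \rfloor = -\lfloor k\alpha\rfloor - 1$; after the two partial sums over $\lfloor k\alpha \rfloor$ cancel, one is left with $\tau_\alpha(1) = 1 + \lfloor m\alpha \rfloor$. For $i = m$, the reindexing $\sum_{j=1}^m \lfloor (m-j)\alpha \rfloor = \sum_{j=1}^{m} \lfloor j\alpha \rfloor - \lfloor m\alpha\rfloor$ reduces \eqref{eq:taualphaterms} to $\tau_\alpha(m) = m + 2\sum_{j=1}^m \lfloor j\alpha \rfloor - (m+1)\lfloor m\alpha \rfloor$, and substituting $\lfloor m\alpha \rfloor = \tau_\alpha(1) - 1$ delivers the stated expression.

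The main obstacle is purely careful bookkeeping, not any conceptual depth: the task is to ensure that the floor identity is applied uniformly across all $j$, in particular to handle the diagonal case $j = i$ (where $(i-j)\alpha = 0$ and the carry argument formally degenerates) and the sign adjustment for $\lfloor -k\alpha \rfloor$ when the argument is negative. The non-Farey hypothesis on $\alpha$ is precisely what rules out the integer boundary cases for each $k\alpha$ with $k \in [m]$, which would otherwise force extra corrections and break the clean summation above.
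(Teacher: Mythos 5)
Your argument is correct. Note first that the paper does not prove this statement at all: it is imported verbatim from \cite[Propositions 3 and A1]{tikan1} as a quoted result, so there is no internal proof to compare against. Your derivation therefore supplies something the paper deliberately omits, and it does so soundly. The one identity everything hinges on, namely
\[
\mychi(\{j\alpha\} \leq \{i\alpha\}) \;=\; \lfloor (i-j)\alpha \rfloor - \lfloor i\alpha \rfloor + \lfloor j\alpha \rfloor + 1 \quad (j \in [m]),
\]
does follow from the carry identity $\lfloor x+y\rfloor = \lfloor x\rfloor + \lfloor y\rfloor + \mychi(\{x\}+\{y\}\geq 1)$ together with $\lfloor -j\alpha\rfloor = -\lfloor j\alpha\rfloor - 1$ and $\{-j\alpha\} = 1-\{j\alpha\}$, and you correctly identify that the hypothesis that $\alpha$ avoids the \Th{$m$} Farey sequence is exactly what guarantees $k\alpha \notin \mathbb{Z}$ for $k \in [m]$ (and hence also for $k = i-j$ with $i \neq j$), so that these two facts hold without boundary corrections; the diagonal case $j=i$ is checked separately and gives $1$ on both sides. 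Summation over $j$ then yields Eq.~\eqref{eq:taualphaterms} from the counting presentation \eqref{eq:taualphadef}, which the paper states just before the theorem as a direct consequence of Ineq.~\eqref{Sosnojyouken1}, so taking it as the starting point is legitimate. The specializations $i=1$ and $i=m$, including the reindexings $\sum_{j=1}^{m}\lfloor(1-j)\alpha\rfloor = -(m-1)-\sum_{k=1}^{m-1}\lfloor k\alpha\rfloor$ and $\sum_{j=1}^{m}\lfloor(m-j)\alpha\rfloor = \sum_{j=1}^{m}\lfloor j\alpha\rfloor - \lfloor m\alpha\rfloor$, check out and deliver both formulas of \eqref{eq:taualphaFirstLast}. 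In short: a correct, self-contained, elementary proof of a result the paper only cites.
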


\subsection{Proof of Proposition \ref{prop:compat}}

\begin{proof}
  From Eq.~\eqref{eq:taualphaterms},
\begin{align}
\myPhi_m\circ\myPsi_m(\tau_\alpha^m)(i)
   & =
 \supermodtwo{m}{\tau_\alpha^m(i+1) - \tau_\alpha^m(1) + 1} - 1
\\
& \equiv m (1 - \lfloor (i+1) \alpha \rfloor) +
  \sum_{j=1}^{m} \lfloor j \alpha \rfloor
+ \sum_{j=1}^{m} \lfloor (i+1-j) \alpha \rfloor
    - 1 - \lfloor m \alpha \rfloor \mod m
    \\
 & \equiv 
     \sum_{j=1}^{m-1} \lfloor j \alpha \rfloor
   + \sum_{j=0}^{m-1} \lfloor (i-j) \alpha \rfloor
    - 1
    \mod m\\
    & \equiv
    \sum_{j=1}^{m-1} \lfloor j \alpha \rfloor
      + \ \lfloor i \alpha \rfloor
      + \sum_{j=1}^{m-1} \lfloor (i-j) \alpha \rfloor
      - 1
     \mod m
     \ \text{for\ } i \in [m-1]. 
\label{eq:compatone}
\end{align}
On the other hand, Eq.~\eqref{eq:taualphaterms}
 for $m - 1$ gives that
\begin{align}
\tau_\alpha^{m-1}(i) 
& =
(m-1) (1-\lfloor i \alpha\rfloor )
    + \sum_{j=1}^{m-1} \lfloor j \alpha \rfloor
    + \sum_{j=1}^{m-1} \lfloor (i-j) \alpha \rfloor \\
    & \equiv
-1+\lfloor i \alpha \rfloor
    + \sum_{j=1}^{m-1} \lfloor j \alpha \rfloor
    + \sum_{j=1}^{m-1} \lfloor (i-j) \alpha \rfloor 
         \mod m      
\ \text{for\ } i \in [m-1], 
\label{eq:compattwo}
\end{align}
which implies that
$\myPhi_m\circ\myPsi_m(\tau_\alpha^m)(i) \equiv \tau_\alpha^{m-1}(i) \mod m$
  for $i \in [m-1]$. The congruence is in fact equality since both LHS and RHS are in
 $[m-1] \subset \{0\} \cup [m-1]$.\end{proof}

\subsection{Proof of Proposition \ref{prop:eps}}
\begin{proof}
(i) 
  From
  Eq.~\eqref{eq:taualphadef},
\begin{equation}
  \tau_\alpha(i)  = \sum_{j=1}^{m}
  \mychi( \{ \alpha j \} \leq \{ \alpha i \}) 
   = \sum_{j=1}^{m}
  \mychi\left( \left\{ \frac{a j}{m} \right\} \leq \left\{ \frac{a i}{m}  \right\}\right) 
   = \sum_{j=1}^{m}
  \mychi\left(\frac{\mathrm{Mod}_m(a j)}{m} \leq \frac{\mathrm{Mod}_m(a i)}{m}\right).
\end{equation}
By $\gcd(a, m) = 1$, ${\mathrm{Mod}_m(a j)}/{m}$ for $j \in [m]$
have $m$ different values, therefore $\tau_\alpha$ is a permutation
and it holds that
\begin{align}
  \tau_\alpha(i)  & =
  \sum_{j=1}^{m} \mychi(\mathrm{Mod}_m(aj) \leq \mathrm{Mod}_m(ai)) \\
  & = \sum_{\ell=0}^{m-1}
  \mychi(\ell \leq \mathrm{Mod}_m(ai)) 
   = 1 + \mathrm{Mod}_m ( a i ) 
   = \supermodtwo{m}{ai + 1},
\end{align}
where the change of the variable $\ell:=\mathrm{Mod}_m(aj)$
is justified since 
$\mathrm{Mod}_m(aj)$ take all the $m$ values.

(ii) A simple but important fact is that 
$\left\lfloor ax/m \right\rfloor = \left\lfloor x (a \pm m \epsilon)/{m} \right\rfloor$
is the case
for 
$a, x \in \mathbb{Z} \setminus m \mathbb{Z}$ with $\gcd(a,m)=1,$ $|x| < m$
and $0 < \epsilon < 1/m^2$,
because there is no integer multiple of $m$ in the interval $(ax-1, ax+1)$.
To apply this fact to the evaluation of $\tau_{\alpha\pm\epsilon},$
decompose
Eq.~\eqref{eq:taualphaterms}
to four parts
\begin{alignat}{3}
  \tau_{\alpha\pm\epsilon}(i) = \tau_{\frac{a}{m}\pm\epsilon}(i)
  & = & \ &  m \left(1 - \left\lfloor \frac{ i  (a \pm m \epsilon)  }{ m } \right\rfloor\right) 
  + \sum_{j=1}^{m-1} \left\lfloor \frac{ j ( a \pm m \epsilon)  }{ m } \right\rfloor \\
  &   & \ & + \left\lfloor \frac{ m ( a \pm m \epsilon)  }{ m } \right\rfloor 
  + \sum_{j \in [m] \setminus \{i\}} \left\lfloor \frac{ (i-j) ( a \pm m \epsilon)  }{ m } \right\rfloor,
\end{alignat}
then the second and the fourth parts are invariant with respect to
$\pm \epsilon$ and keep their values at $\epsilon = 0$. The first part is
\begin{equation}
  m \left(1 - \left\lfloor \frac{ i  (a \pm m \epsilon)  }{ m } \right\rfloor\right)
  = \begin{cases}
\displaystyle    m \left(1 - \left\lfloor \frac{ i  a  }{ m } \right\rfloor\right) & (i \neq m) \\
    m \left(1 - \left\lfloor  {a \pm m \epsilon}  \right\rfloor\right) & (i = m)
\end{cases}
  \end{equation}
where
\begin{equation}
  m \left(1 -  \left\lfloor  {a + m \epsilon} \right\rfloor\right)
  = m \left(1 - a \right)
\end{equation}
which keeps the same value with the $\epsilon = 0$ case and
\begin{equation}
  m \left(1 - \left\lfloor {a - m \epsilon} \right\rfloor\right)
  =  {m \left(1 - (a - 1)\right)}
\end{equation}
which is larger than the $\epsilon = 0$ case by $+m$.
The third part is
\begin{equation}
  \left\lfloor \frac{ m ( a \pm m \epsilon)  }{ m } \right\rfloor
  = \left\lfloor    a \pm m \epsilon  \right\rfloor
\end{equation}
which, as seen above,
share the value as the $\epsilon = 0$ case when $\pm$ takes the
positive sign and
takes the value smaller than the $\epsilon=0$ case by $1$
when $\pm$ takes the negative sign.

Combining these,
we have shown that
\begin{equation}
  \tau_{\alpha+\epsilon}(i) = \tau_{\alpha}(i)
  ( = \supermodtwo{m}{ai+1} \text{\ by (i)) \ }
  \text{\ for\ } i \in [m]
\end{equation}
and
\begin{align}
  \tau_{\alpha-\epsilon}(i) & = \tau_{\alpha}(i)
  + m \mychi(i=m) - 1 
   = \supermodtwo{m}{ai+1}-1+m \mychi(i=m) \\
  & = \mathrm{Mod}_m(ai) + m \mychi(i=m) 
   = \supermodtwo{m}{ai}  \text{\ for\ } i \in [m],
\end{align}
which complete the proof.
\end{proof}

\subsection{Proof of Proposition \ref{prop:division}}

\begin{proof}
  Suppose that $\pi$ is non-branching. 
  For $\hat{\theta} \in \myPsi_{m}^{-1} \circ \myPhi_{m}^{-1}(\{\pi\}),$
  take an $\alpha \in \fperm{m}{\hat{\theta}}$ arbitrarily.
  It holds that $\hat{\theta} = \tau_{\alpha}^{m}$. Then, 
  $\pi = \myPhi_{m}\circ\myPsi_{m}(\tau_{\alpha}^{m})$ which
  is $\tau_{\alpha}^{m-1}$ by Proposition \ref{prop:compat}.
  Thus $\alpha \in \fperm{m-1}{\pi}$. We have shown that
  $\fperm{m}{\hat{\theta}} \subset \fperm{m-1}{\pi}.$
  If $\fperm{m-1}{\pi} \setminus \fperm{m}{\hat{\theta}}
  \neq \emptyset$
  then there exists an irrational $\beta$ in this set difference 
  (because it is the difference of two open intervals).
  Then $\tau_\beta^{m} \neq \hat{\theta}$ by $\beta \not\in
  \fperm{m}{\hat{\theta}} $
  and Theorem \ref{known:sur}
  whereas $\tau_\beta^{m-1} = \pi$
  by $\beta \in \fperm{m-1}{\pi}.$
  However,
  $\myPsi^{-1}\circ\myPhi^{-1}(\{\pi\})=\{\hat{\theta}\}$
  by definition,
  contradicting to
  Proposition \ref{prop:compat}. Thus, it holds that
  $\fperm{m}{\hat{\theta}} = \fperm{m-1}{\pi}.$

  Suppose that $\pi$ is branching. Let
  $\CDS_m(\myPhi_m^{-1}(\pi)) = \{a\}$ with $\gcd(a,m)=1$ and
  $\theta_{a,0}, \theta_{a,1} \in \Rcr_m$ be the two children
  of $\myPhi_m^{-1}(\pi)$.

By Proposition \ref{prop:eps},
$\tau_{a/m - \epsilon}^{m} = \theta_{a,0}$ for sufficiently small
$\epsilon > 0$ and
$\tau_{a/m + \epsilon}^{m} = \theta_{a,1}$ for sufficiently small
    $\epsilon \geq 0$, which imply that
    $\fperm{m}{\theta_{a,0}}$
and
    $\fperm{m}{\theta_{a,1}}$
are adjacent
    (in this order) and that $a/m$ is the boundary
of them. To show the inclusion
$\fperm{m}{\theta_{a,0}} \subset \fperm{m-1}{\pi}$,
    take $\alpha \in
    \fperm{m}{\theta_{a,0}}$ arbitrarily. We have $\theta_{a,0} = \tau_{\alpha}^m$ and
    $\pi = \myPhi_m\circ\myPsi_m(\theta_{a,0}) =
    \myPhi_m\circ\myPsi_m(\tau_{\alpha}^m) =
    \tau_{\alpha}^{m-1}$ by Proposition \ref{prop:compat}. Thus
    $\alpha \in \fperm{m-1}{\pi}.$ This shows that $\fperm{m}{\theta_{a,0}} \subset \fperm{m-1}{\pi}$.
Also,
$\{a/m\} \cup
\fperm{m}{\theta_{a,1}} \subset \fperm{m-1}{\pi}$
is shown by taking $\alpha$ in LHS by a similar argument,
noting that $\tau_{a/m}^{m} = \theta_{a,1}$
by Proposition \ref{prop:eps}~(i)
      and that
      $a/m$ is not in the
\Th{$(m-1)$}%
      Farey sequence. We have
      shown that $\fperm{m}{\theta_{a,0}} \cup
      \{a/m\} \cup \fperm{m}{\theta_{a,1}}
      \subset \fperm{m-1}{\pi},$
      where the disjointness of LHS is obvious.
      For the reverse inclusion, suppose that
      $\fperm{m-1}{\pi} \setminus (\fperm{m}{\theta_{a,0}} \sqcup
      \{a/m\} \sqcup  \fperm{m}{\theta_{a,1}}) \neq \emptyset$.
      By taking an irrational $\beta$ in this
      set difference, we have $\tau_\beta^{m-1} = \pi$ and
      $\tau_\beta^{m} \not \in \{\theta_{a,0}, \theta_{a,1}\}$,
      contradicting to that
      $\tau_\beta^{m-1} = \myPhi_m\circ\myPsi_m(\tau_\beta^{m})$
      (by Proposition \ref{prop:compat})
      and
      $\myPsi_m^{-1}\circ\myPhi_m^{-1}({\pi}) =
      \{\theta_{a,0}, \theta_{a,1}\}$.
\end{proof}
\printindex
\end{document}